\newcommand{\module}[1]{\lvert{#1}\rvert}
\newcommand{\ceil}[1]{\lceil{#1}\rceil}
\newcommand{\Ceil}[1]{\left\lceil{#1}\right\rceil}
\newcommand{\floor}[1]{\lfloor#1\rfloor}
\newcommand{\Floor}[1]{\left\lfloor#1\right\rfloor}
\theoremstyle{plain}
\newtheorem{lemma}{Lemma}[section]
\newtheorem{theorem}[lemma]{Theorem}
\newtheorem{corollary}[lemma]{Corollary}
\newtheorem{claim}{Claim}
\newtheorem*{theorem*}{Theorem}
\newtheorem*{stat}{\name}
\newcommand{\name}{testing}
\theoremstyle{definition}
\theoremstyle{remark}
\newtheorem{remark}[lemma]{Remark}
\newtheorem{notation}[lemma]{Notation}
\newtheorem*{remark*}{Remark}
\newenvironment{all}[1]{\renewcommand{\name}{#1}\begin{stat}}
                         {\end{stat}}
\newcommand{\qedc}{{\qed}~{\rm Claim~{\theclaim}.}}
\newcommand{\qedsc}{{\qed}~{\rm Claim.}}
\newenvironment{cproof}
{\begin{proof}[Proof of Claim.]}
{\qedc\renewcommand{\qed}{}\end{proof}}
\numberwithin{equation}{section}
\newcommand{\set}[1]{\{#1\}}
\newcommand{\setm}[2]{\set{#1\mid#2}}
\newcommand{\Set}[1]{\left\{#1\right\}}
\newcommand{\Setm}[2]{\Set{#1\mid#2}}
\newcommand{\famm}[2]{(#1\mid#2)}
\newcommand{\Famm}[2]{\left(#1\mid#2\right)}
\newcommand{\radi}[2]{r\left(#1,#2\right)}
\newcommand{\cO}{\mathcal{O}}
\newcommand{\C}{\mathbb{C}}
\newcommand{\N}{\mathbb{N}}
\newcommand{\Q}{\mathbb{Q}}
\newcommand{\R}{\mathbb{R}}
\newcommand{\Z}{\mathbb{Z}}
\newcommand{\res}{\mathbin{\restriction}}
\newcommand{\ignorer}[1]{}
\DeclareMathOperator{\Tr}{Tr}
\DeclareMathOperator{\dc}{\delta}
\newcommand{\modulo}[2]{R_{#2}(#1)}
\newcommand{\notdiv}{\nmid}
\newcommand{\qtr}{\Q^{{\rm tr}}}
\newcommand{\ztr}{\Z^{{\rm tr}}}
\newcommand{\Legendre}[2]{\left(\frac{{#1}}{{#2}}\right)}
\DeclareMathOperator{\JR}{JR}
\keywords{Rings of algebraic integers; Undecidability; Totally real fields; Julia Robinson's numbers}
\subjclass[2010]{
11R04, %Algebraic numbers; rings of algebraic integers.
11U05, %Decidability
11R80. %Totally real fields 
}
\begin{document}

\title{Julia Robinson's numbers}

\author[P.~Gillibert]{Pierre Gillibert}

\address[P. Gillibert]{Institut f\"ur Diskrete Mathematik \& Geometrie, Technische Universit\"at Wien, Austria}
\email[P. Gillibert]{pgillibert@yahoo.fr}
\urladdr[P. Gillibert]{http://www.gillibert.fr/pierre/}

\author[G. Ranieri]{Gabriele Ranieri}
\address[G. Ranieri]{Number theory and algebra group, Valpara\'iso}
\email{gabriele.ranieri@pucv.cl}

\date{\today}

\begin{abstract}
We partially answer to a question of Vidaux and Videla by constructing an infinite family of rings of algebraic integers of totally real subfields of $\overline{\Q}$ whose Julia Robinson's Number is distinct from $4$ and $+ \infty$. Moreover the set of the Julia Robinson's Number that we construct is unbounded.  
\end{abstract}

\maketitle

\section{Introduction}

Let $R$ be the ring of the algebraic integers of a subfield of the field $\qtr$ of all totally real algebraic numbers. 
For every $t \in \R \cup \set{ + \infty}$, denote by $R_t$ the set of $\alpha \in R$ such that, all conjugates $\beta$ of $\alpha$ satisfy $0 < \beta < t$.
Set $A ( R ) = \setm{ t \in \R \cup \set{ + \infty }}{R_t \text{ is infinite}}$. Observe that $ A( R ) $ is either an interval or $\set{ + \infty }$.
The \emph{Julia Robinson Number} of $ R $ is defined as $\JR ( R )=\inf ( A ( R ) )$. Moreover $ R $ has the \emph{Julia Robinson Property} (JRP) if $\JR ( R ) \in A ( R )$.

Robinson, in \cite{Robinson}, generalizing her methods of \cite{Robinson2}, proves that if a ring $ R $ has the JRP, then the semi-ring $( \N ; 0, 1, +, .)$ is
first-order definable in $ R $ (hence the arithmetic of $R$ is as complicated as that of $\N$, and in particular $ R $ has an undecidable first order theory). There is a lot of examples of rings with the JRP.

If $ K $ is a totally real algebraic number field, then $\JR ( \cO_K ) = + \infty$ (see \cite{Robinson}). In particular, the first order theory of $\cO_K$ is undecidable. Note that Denef and Lipshitz prove a stronger result in \cite{Denef1,Denef2, DenefLipshitz}: if $ K $ is a totally real number field, or an extension of degree 2 of a totally real number field then $\N$ is Diophantine in $\cO_K$. It follows from the Matiyasevich's solution to Hilbert 10th problem \cite{Matiyasevich70}, that the existential positive first order theory of $\cO_K$ is undecidable. The result is even further extended by Shlapentokh in a serie of papers \cite{Shlapentokh94, Shlapentokh97, Shlapentokh00, Shlapentokh02, Shlapentokh03, Shlapentokh07} to large subrings of $K$ (see also Poonen \cite{Poonen03}, Poonen and Shlapentokh \cite{PoonenShlapentokh}). Mazur and Rubin prove in \cite{MazurRubin} a similar result for the algebraic integer rings of an arbitrary number fields, assuming that the Tate-Shafarevich conjecture holds. See Shlapentokh \cite{Shlapentokh11, Shlapentokh12, Shlapentokh15} for more results. See also Matiyasevich \cite{Matiyasevich11} for more on the Hilbert's 10th problem, and Moret-Bailly \cite{MoretBailly1, MoretBailly2} for similar problems in function fields.

On the other hands for infinite algebraic extension of $\Q$ far less is known. The Julia Robinson Property seems particularly useful. Vidaux and Videla observe in \cite{VidauxVidela2} that if a totally real algebraic field $ K $ has the Northcott property, then $\JR(\cO_K)=+\infty$. Using the result of Bombieri and Zannier in \cite{BombieriZannier}, Vidaux and Videla deduce that for every positive integer $ d $, the field generated by all totally real abelian extensions of $\Q$ of degree $ d $ has its ring of algebraic integers with $\JR$ equals to $+ \infty$. In particular the ring has undecidable first order theory. 

Denote by $\ztr$ the ring of all totally real algebraic integers. As noted by Robinson in \cite{Robinson}, it follows from Schur \cite{Schur} or Kronecker \cite{Kronecker} that $\JR(\ztr)=4$, and that $\ztr$ has the JRP (also see Robinson \cite{RRobinson} or Jarden and Videla \cite{JardenVidela}). In particular the first order theory  of $\ztr$ is undecidable (see \cite{Robinson}).
In \cite[Introduction]{VidauxVidela1}, Vidaux and Videla, starting from a remark of Robinson, ask the following question
\bigskip

\noindent {{\bf Question}: Can any algebraic number (or even real number) be realized as the JR number of a ring of integers in $\qtr$?}
\bigskip

Then, they find a family of rings $\cO$ contained in $\ztr$, such that $\JR ( \cO )$ is distinct from $4$ and $+\infty$ (see \cite[Theorem 1.4]{VidauxVidela1} for the details). On the other hand, they do not know if at least an element of the family is the ring of the algebraic integers of its quotient field. 
The paper \cite{VidauxVidela1} is also interesting because it contains a very precise bibliography on undecidability problem over algebraic rings. Finally, very recently, Vidaux and Videla (see \cite{VidauxVidela2}) discovered an interesting relation between the finiteness of the Julia Robinson Number and the Northcott Property. Starting from this, Widmer defined the Northcott Number and gave interesting examples of families of algebraic rings with bounded Northcott Numbers (see \cite{Widmer} for the details).    

In this paper we find infinitely many real numbers $t > 4$ such that $ t $ is the Julia Robinson Number of a ring of the algebraic integers of a subfield of $\qtr$. This yields the first example of ring of the algebraic integers of a totally real field with $\JR$ distinct from $4$ and $+\infty$. More precisely, we prove the following result.

\begin{all}{Theorem \ref{T:exemplenombreJR}}
Let $t\ge 1$ be a square-free odd number. Then the following statement holds.
\begin{enumerate}
\item There are infinitely many fields $ K $ such that $\cO_K$ has Julia Robinson's number $\Ceil{2\sqrt{2t}}+2\sqrt{2t}$.
\item There are infinitely many fields $ K $ such that $\cO_K$ has Julia Robinson's number $8t$.
\end{enumerate}
\end{all}

To prove Theorem~\ref{T:exemplenombreJR}, we choose $0 < a < b$ coprime integers satisfying certain conditions (see for instance Section \ref{S:JR}) and we set $\alpha = \frac{ a + i\sqrt{b^2 - a^2}}{b}$. Then $\alpha$ is a quadratic number with absolute value equal to $1$. We recall that a number field $L$ is $CM$ if it is a quadratic imaginary extension of a totally real field (or equivalently $ L $ is not contained in $\R$ and there exists $\iota$ in ${\rm Aut} ( L/ \Q )$ such that for every embedding of $ L $ over $\C$, $\iota$ is the complex conjugation, see for instance \cite[Chapter 4]{Washington}). Amoroso and Nuccio \cite[Proposition 2.3]{AmorosoNuccio} prove that a field is $CM$ if and only if it is generated over $\Q$ by an element whose all conjugates have absolute value equal to $1$. Then for every positive integer $n$, the field generated by an $n$th root of $\alpha$ is a $CM$ field, and so it contains a totally real field of index $2$. We choose a positive integer $r$ such that, for every prime number dividing $b$, $v_p ( b )$ is coprime with $r$ and, for every positive integer $k$, we set $K_{r^k}$ the maximal totally real field contained in the field generated by a $r^k$-th root of $\alpha$. Let $K$ be the union of all $K_{r^k}$ and $\cO_K$ its ring of the algebraic integers. By characterizing the algebraic integers of $K_{r^k}$ for every $k$, we get a very precise description of the algebraic integers of $ K $ and we compute the JR Number of $K$ (see Theorem \ref{T:MainJuliaRobinsonNumber}). Moreover $\cO_K$ has the JRP and so the first order theory of $\cO_K$ is undecidable (see Corollary~\ref{C:indec}). 

In Section~\ref{sec7} we use some results of analytic number theory to get the explicit examples of Julia Robinson's number given in Theorem~\ref{T:exemplenombreJR}.

\section{Notations}\label{sec1}
Let $F$ be an algebraic extension of $\Q$, we denote by $\cO_F$ the ring of all algebraic integers of $F$. Assume that $F$ is a number field. Let $p$ be a prime number, $\beta$ be in $F^\ast$, and $P$ be a prime ideal of $\cO_F$ over $p$, let $e$ be its ramification index over $\Q$ and $\lambda \in \Z$ be the exponent of $P$ in the factorization of the fractional ideal $( \beta )$.  We define the $P$-adic valuation of $\beta$ as $\frac{\lambda}{e}$. Then in particular, the $P$-adic valuation of $\beta$ should not be an integer.
We always denote by $v_p$ the $p$-adic valuation and we always say that the $P$-adic valuation extends the $p$-adic valuation. 

For every real number $\gamma$, we set $\ceil{\gamma}$ the smallest integer $\geq \gamma$, and $\floor{\gamma}$ the largest integer $\le\gamma$.

\section{Some preliminaries}\label{sec2}

Given an integer $b\ge 1$, and given $p_1^{n_1}p_2^{n_2}\dots p_s^{n_s}$ the prime decomposition of $ b $, and $u\ge 0$ in $\mathbb{Q}$, we denote
\[
\radi{b}{u}=p_1^{\ceil{n_1u}}p_2^{\ceil{n_2u}}\dots p_s^{\ceil{n_su}}\,.
\]
The next Lemma gives some basic properties of $\radi{b}{u}$.

\begin{lemma}\label{L:radi}
Let $b\ge 1$ be an integer, let $u\ge 0$ be in $\mathbb{Q}$. Then $\radi{b}{u}$ is an integer, and $\radi{b}{u}b^{-u}$ is an algebraic integer. Also note that $\radi{b}{0}=1$ and $\radi{b}{1}=b$.
\end{lemma}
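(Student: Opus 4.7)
The plan is to unfold the definition of $\radi{b}{u}$ and check each claim directly; this is really a warm-up lemma with no substantial obstacle.

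First I would note that since $u\ge 0$ and each $n_i\ge 1$, the exponents $\ceil{n_i u}$ are nonnegative integers, so $\radi{b}{u}=\prod_{i=1}^s p_i^{\ceil{n_i u}}$ is a product of prime powers with nonnegative integer exponents, hence a positive integer.

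Next, to see that $\radi{b}{u}b^{-u}$ is an algebraic integer, I would rewrite
\[
\radi{b}{u}b^{-u}=\prod_{i=1}^{s}p_i^{\ceil{n_i u}-n_i u}.
\]
For each $i$, the rational exponent $\ceil{n_i u}-n_i u$ is nonnegative by definition of the ceiling, so I can write it as $m_i/N$ for some common positive integer denominator $N$ and a nonnegative integer numerator $m_i$. Then each factor $p_i^{m_i/N}$ is a real root of the monic integer polynomial $x^N - p_i^{m_i}$, and is therefore an algebraic integer. Since the algebraic integers form a ring, the product is again an algebraic integer.

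Finally, the two boundary identities are immediate from the definition: $\radi{b}{0}=\prod p_i^{\ceil{0}}=\prod p_i^0=1$, and $\radi{b}{1}=\prod p_i^{\ceil{n_i}}=\prod p_i^{n_i}=b$. The only mild subtlety is remembering to use a common denominator in step two so that the factors live in a single finite extension of $\Q$, but as noted one could equally invoke the closure of algebraic integers under multiplication directly.
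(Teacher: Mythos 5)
Your proof is correct and complete; the paper actually omits a proof of this lemma entirely, evidently regarding it as routine. Your argument — reduce to the prime factorization, observe that each factor $p_i^{\ceil{n_i u}-n_i u}$ has a nonnegative rational exponent and is hence a root of a monic integer polynomial, then invoke closure of the algebraic integers under multiplication — is exactly the standard justification one would supply.
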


Let $ K $ be a number field and let $\alpha$ be an element of $ K $. Choose an $n$th root of $\alpha$ and denote it by $\alpha^{\frac{1}{n}}$.  We need to study some properties of a linear combination with coefficients in $K$ of some powers of   $\alpha^{\frac{1}{n}}$. We do this in the following Lemmas.

\begin{lemma}\label{L:preval}
Let $n\ge 1$ be an integer, let $ t $ be an integer relatively prime to $n$, $-n+1\le k_1,k_2\le n-1$, and $\ell_1,\ell_2$ in $\mathbb{Z}$. Assume that $\ell_1+\frac{k_1t}{n}=\ell_2+\frac{k_2t}{n}$. Then one of the following holds:
\begin{enumerate}
\item $k_1=k_2$.
\item $k_1<0<k_2$ and $k_2=k_1+n$.
\item $k_2<0<k_1$ and $k_1=k_2+n$.
\end{enumerate}
\end{lemma}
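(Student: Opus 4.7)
The plan is to rewrite the equation $\ell_1+\frac{k_1t}{n}=\ell_2+\frac{k_2t}{n}$ in the form
\[
(\ell_1-\ell_2)n=(k_2-k_1)t,
\]
and then exploit the assumption $\gcd(t,n)=1$. From this rearrangement I immediately get $n\mid (k_2-k_1)t$, and coprimality of $t$ and $n$ upgrades this to $n\mid (k_2-k_1)$.

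Next I use the size constraint $-n+1\le k_1,k_2\le n-1$, which gives $\module{k_2-k_1}\le 2n-2<2n$. Combined with $n\mid (k_2-k_1)$, the only possibilities are $k_2-k_1\in\set{-n,0,n}$. This produces the three cases of the statement:
\begin{itemize}
\item If $k_2-k_1=0$, we are in case (1).
\item If $k_2-k_1=n$, then from $k_2\le n-1$ we deduce $k_1=k_2-n\le -1<0$, and from $k_1\ge -n+1$ we deduce $k_2=k_1+n\ge 1>0$; this is case (2).
\item If $k_2-k_1=-n$, a symmetric argument (swapping the roles of $k_1$ and $k_2$) yields case (3).
\end{itemize}

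There is no serious obstacle here; this is essentially a two-line divisibility argument followed by a bookkeeping check of the sign constraints. The only thing one must be careful about is to verify that in the exceptional cases (2) and (3) the strict inequalities $k_1<0<k_2$ (resp.\ $k_2<0<k_1$) really follow from the range $-n+1\le k_i\le n-1$, rather than just $k_1\le 0\le k_2$; this is immediate from the bounds as above.
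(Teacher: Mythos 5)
Your proof is correct and follows essentially the same route as the paper's: clear denominators to get $n\mid(k_2-k_1)t$, use $\gcd(t,n)=1$ to conclude $n\mid(k_2-k_1)$, bound $\module{k_2-k_1}<2n$ to restrict to $\{-n,0,n\}$, and then check signs. The sign bookkeeping in the nontrivial cases is carried out carefully and matches the statement.
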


\begin{proof}
Assume that $\ell_1+\frac{k_1t}{n}=\ell_2+\frac{k_2t}{n}$. Hence $n\ell_1+k_1t=n\ell_2+k_2t$. Thus $(k_1-k_2)t$ is a multiple of $n$. Then it follows, as $t$ is relatively prime to $n$, that $k_1-k_2$ is a multiple of $n$. Since $-2n+2\le k_1-k_2\le 2n-2$, we have that $k_1-k_2\in\set{-n,0,n}$.

If $k_1-k_2=0$ then $k_1=k_2$.

If $k_1-k_2=n$, then $k_1=n+k_2$. Moreover, we have $k_1=n+k_2\ge n -n+1>0$ and $k_2=k_1-n\le n-1-n<0$.

If $k_1-k_2=-n$, then $k_2=n+k_1$ and $k_1=-n+k_2\le -n +n-1<0$. Similarly $k_2=k_1+n\ge n-1-n<0$.
\end{proof}

\begin{lemma}\label{L:groslambda}
Let $ K $ be a number field, $n$ be a natural number, $ p $ be a prime number, $\alpha$ be in $K$. Let $\nu_{1, p}$, and $\nu_{2, p}$ be valuations over $ K $ extending $ v_p $ to $ K $, let $\famm{\lambda_k}{-n+1\le k\le n-1}$ be a family in $ K $. Assume that the following conditions hold:
\begin{enumerate}
\item $\nu_{1, p}(\alpha)$ is a positive integer relatively prime to $n$.
\item $\nu_{2, p}(\alpha)$ is a negative integer relatively prime to $n$.
\item 
\[\gamma = \sum_{k=-n+1}^{n-1}\lambda_k\alpha^{\frac{k}{n}}\quad\text{is an algebraic integer.}
\]
\item For all $-n+1\le k\le n-1$, we have $\nu_{1, p}( \lambda_k )=\nu_{2, p}(\lambda_k)$ is an integer.
\end{enumerate}
Then for all $-n+1\le k\le n-1$, we have $\nu_{1, p}(\lambda_k)\ge\Ceil{\frac{-k\nu_{2, p}(\alpha)}{n}}$, $\nu_{1, p}(\lambda_{k})\ge\Ceil{\frac{-k\nu_{1, p}(\alpha)}{n}}$, and $\nu_{1, p}(\lambda_0)\ge 0$.
\end{lemma}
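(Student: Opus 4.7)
The plan is to extend both valuations $\nu_{1,p}, \nu_{2,p}$ from $K$ to the field $L = K(\alpha^{1/n})$, obtaining $\tilde{\nu}_1, \tilde{\nu}_2$. Since any such extension satisfies $n\tilde{\nu}_i(\alpha^{1/n}) = \tilde{\nu}_i(\alpha) = a_i$, where $a_i := \nu_{i,p}(\alpha)$, one automatically has $\tilde{\nu}_i(\alpha^{1/n}) = a_i/n$. Writing $\mu_k = \nu_{1,p}(\lambda_k) = \nu_{2,p}(\lambda_k) \in \Z$, each summand $\lambda_k\alpha^{k/n}$ has $\tilde{\nu}_i$-value $\mu_k + k a_i/n$. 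Because $\mu_k$ is an integer, the three claimed inequalities at each $k$ are collectively equivalent to the single assertion
\[
M := \min\bigl\{\mu_k + k a_i/n : -n+1 \le k \le n-1,\ i \in \{1,2\}\bigr\} \ge 0.
\]

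The plan is to assume for contradiction that $M<0$, attained at some $(k^*, i^*)$. Because $a_2 < 0 < a_1$, the value $\mu_{k^*} + k^* a_i/n$ is strictly smaller for $i=2$ than for $i=1$ when $k^* > 0$, and vice versa when $k^* < 0$; thus $i^* = 2$ if $k^* > 0$, $i^* = 1$ if $k^* < 0$, and either choice is available for $k^* = 0$. I would then work in $\tilde{\nu}_{i^*}$: by definition $\min_k \tilde{\nu}_{i^*}(\lambda_k\alpha^{k/n}) = M$, and by Lemma~\ref{L:preval} this minimum is realized either uniquely at a single index, or at a ``pair'' $\{k_+,\, k_+ - n\}$ with $k_+ \in \{1, \dots, n-1\}$.

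If the minimum is uniquely realized in $\tilde{\nu}_{i^*}$, the strict ultrametric inequality gives $\tilde{\nu}_{i^*}(\gamma) = M < 0$, contradicting $\gamma \in \cO$. Otherwise $\mu_{k_+} + k_+ a_{i^*}/n = \mu_{k_-} + k_- a_{i^*}/n = M$ with $k_- = k_+ - n$, whence $\mu_{k_-} - \mu_{k_+} = a_{i^*}$. Substituting this into the $\tilde{\nu}_{\bar{i}^*}$-values of the two pair members, an elementary calculation shows that the smaller equals $M - k_+(a_1-a_2)/n$ when $i^*=1$ and $M - (a_1-a_2)(n-k_+)/n$ when $i^*=2$; in either case the positivity of $a_1 - a_2$ and of $\min(k_+,\, n-k_+)$ forces this value to be strictly less than $M$, contradicting the minimality of $M$.

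Hence $M \ge 0$, giving all stated bounds and, as the special case $k=0$, the inequality $\mu_0 \ge 0$. The structural insight is a two-valuation rigidity: a tie within a pair in one valuation forces a strictly smaller value in the other, and this is only possible because $a_1$ and $a_2$ have opposite signs. The main obstacle one might worry about---that cancellations in a single valuation could mask individual term bounds---is sidestepped by minimizing simultaneously over both valuations from the outset, so that an unwanted tie in $\tilde{\nu}_{i^*}$ is immediately refuted by the value produced in $\tilde{\nu}_{\bar{i}^*}$.
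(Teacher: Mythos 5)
Your proposal is correct and follows essentially the same route as the paper's proof: both set the global minimum over the two families of extended-valuation values of the summands, assume it is negative, and then derive a contradiction either from the unique-minimum ultrametric equality $\tilde{\nu}_{i^*}(\gamma)=M<0$, or, when a tie occurs (necessarily at a pair $\{k_+,k_+-n\}$ by Lemma~\ref{L:preval}), from the strictly smaller value obtained by evaluating the appropriate pair member under the other valuation, exploiting that $\nu_{1,p}(\alpha)$ and $\nu_{2,p}(\alpha)$ have opposite signs. Your reformulation in terms of $M$ and the explicit determination of $i^*$ from the sign of $k^*$ is a clean packaging of the paper's case analysis rather than a genuinely different argument.
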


\begin{proof}
By conditions (1) and (2), $K ( \alpha^{\frac{1}{n}} )/ K$ is totally ramified over $\nu_{1, p}$ and $\nu_{2, p}$. Then, $\nu_{1, p}$ and $\nu_{2, p}$ extend in a unique way to $K(\alpha^{\frac{1}{n}})$. Denote $I=\setm{k\in\mathbb{Z}}{-n+1\le k\le n-1}$. 
As $ \gamma $ is an algebraic integer, it follows that $\nu_{1, p}(\gamma)\ge 0$, and $\nu_{2, p}(\gamma)\ge 0$. Given $k\in I$, the following equalities hold: 
\[
\nu_{1, p}(\lambda_k\alpha^{\frac{k}{n}})=\nu_{1, p}(\lambda_k)+\frac{k\nu_{1, p}(\alpha)}{n}\,.
\]
\[
\nu_{2, p}(\lambda_k\alpha^{\frac{k}{n}})=\nu_{2, p}(\lambda_k)+\frac{k\nu_{2, p}(\alpha)}{n}=\nu_{1, p}(\lambda_k)+\frac{k\nu_{2, p}(\alpha)}{n}\,.
\]

Denote
\[
S_1=\Setm{\nu_{1, p}(\lambda_k)+\frac{k\nu_{1, p}(\alpha)}{n}}{k\in I}
=\Setm{\nu_{1, p}(\lambda_k\alpha^{\frac{k}{n}})}{k\in I}
\]
\[
S_2=\Setm{\nu_{1, p}(\lambda_k)+\frac{k\nu_{2,p}(\alpha)}{n}}{k\in I}
=\Setm{\nu_{2, p}(\lambda_k\alpha^{\frac{k}{n}})}{k\in I}.
\]
Let $u$ be the smallest element of $S_1\cup S_2$. Assume that $u<0$ and that $u\in S_1$. Fix $k_1$ such that $u=\nu_{1, p}(\lambda_{k_1})+\frac{k_1\nu_{1, p}(\alpha)}{n}$.

Assume that for all $k\not=k_1$, we have $u<\nu_{1, p}(\lambda_k\alpha^{\frac{k}{n}})$, then $\nu_{1, p}(\gamma)=u<0$; a contradiction. Therefore, there is $k_2\in I$ such that $k_2\not=k_1$ and the following equalities hold:
\[
\nu_{1, p}(\lambda_{k_1})+\frac{k_1\nu_{1, p}(\alpha)}{n}=u=\nu_{2, p}(\lambda_{k_2}\alpha^{\frac{k_2}{n}})=\nu_{1, p}(\lambda_{k_2})+\frac{k_2\nu_{1, p}(\alpha)}{n}\,.
\]
Up to an exchange of $k_1$ and $k_2$, we can assume that $k_1<k_2$, hence it follows from Lemma~\ref{L:preval} that $k_1<0<k_2$ and $k_2=k_1+n$. As $k_2>0$ and $\nu_{1, p}(\alpha)>0>\nu_{2, p}(\alpha)$, it follows that
\[
\nu_{1, p}(\lambda_{k_2})+\frac{k_2\nu_{2, p}(\alpha)}{n}< \nu_{1, p}(\lambda_{k_2}) < \nu_{1, p}(\lambda_{k_2})+\frac{k_2\nu_{1, p}(\alpha)}{n}=u\,.
\]
Hence $u$ is not the smallest element of $S_1\cup S_2$; a contradiction. With a similar argument if $u\in S_2$, we can find another contradiction. It follows that $u\ge 0$. 

For all $k\in I$, we have $\nu_{1, p}(\lambda_k)+\frac{k\nu_{1,p}(\alpha)}{n}\ge u\ge 0$. Hence $\nu_{1, p}(\lambda_k)\ge -\frac{k\nu_{1,p}(\alpha)}{n}$, therefore, as $\nu_{1, p}(\lambda_k)$ is an integer it follows that $\nu_{1, p}(\lambda_k)\ge \Ceil{\frac{-k\nu_{1,p}(\alpha)}{n}}$. A similar argument yields $\nu_{1, p}(\lambda_k)=\nu_{2, p}(\lambda_k)\ge \Ceil{\frac{-k\nu_{2,p}(\alpha)}{n}}$.
\end{proof}

\begin{lemma}\label{L:dim}
Let $K$ be a number field, $n\in\mathbb{N}$, $p$ be a prime number, $\alpha\in K$. Let $\nu_p$ be a valuation over $K$ extending $v_p$ to $K$. Assume that the following conditions hold:
\begin{enumerate}
\item $\nu_p(\alpha)$ is an integer relatively prime to $n$.
\item For all $\lambda\in K^*$, $\nu_p(\lambda)$ is an integer.
\end{enumerate}
Then $[K(\alpha^{\frac{1}{n}}):K]=n$.
\end{lemma}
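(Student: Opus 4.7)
The plan is to bound $[K(\alpha^{1/n}):K]$ from above by $n$ (automatic from the polynomial $X^n-\alpha$) and from below by $n$ using ramification. The extension lower bound is the only real content.

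First I would set $L = K(\alpha^{1/n})$ and $\beta = \alpha^{1/n}$. Since $\beta$ satisfies $X^n - \alpha \in K[X]$, we certainly have $[L:K] \le n$, so it suffices to show $[L:K] \ge n$. Choose any extension $w$ of $\nu_p$ to $L$. From $\beta^n = \alpha$ we get $n\cdot w(\beta) = \nu_p(\alpha)$, hence $w(\beta) = \nu_p(\alpha)/n$. By hypothesis (1), the numerator $\nu_p(\alpha)$ is coprime to $n$, so when written in lowest terms the fraction $w(\beta)$ has denominator exactly $n$.

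Next I would examine the value groups. Hypothesis (2) gives $w(K^*) = \nu_p(K^*) = \mathbb{Z}$, while $w(L^*)$ contains both $\mathbb{Z}$ and $\nu_p(\alpha)/n$, hence contains the full group $\frac{1}{n}\mathbb{Z}$. Therefore the ramification index
\[
e(w \mid \nu_p) = [w(L^*) : w(K^*)]
\]
is a multiple of $n$. Finally I would invoke the standard fundamental inequality for extensions of a valuation on a number field: if $w_1,\dots,w_g$ are the extensions of $\nu_p$ to $L$ with ramification indices $e_i$ and residue degrees $f_i$, then $\sum_i e_i f_i \le [L:K]$, so in particular $e \le [L:K]$. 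Combining, $n \le e \le [L:K] \le n$, forcing equality throughout.

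The main (and really only) obstacle is making sure that the fundamental inequality $e \le [L:K]$ is applied correctly for a single extension of $\nu_p$. Since the proof uses it only as an inequality, not as an equality, I don't need to worry about whether there are multiple extensions of $\nu_p$ to $L$, nor whether the extension is separable (which it is, as we are in characteristic zero). Everything else is bookkeeping with value groups, and the coprimality hypothesis $\gcd(\nu_p(\alpha),n)=1$ is used exactly once, to force $n \mid e$.
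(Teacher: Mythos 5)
Your proof is correct, but it takes a genuinely different route from the paper's. You argue via ramification: show the value group $w(L^*)$ contains $\tfrac1n\mathbb{Z}$ (using coprimality of $\nu_p(\alpha)$ and $n$), conclude $n \mid e(w\mid\nu_p)$, and then invoke the fundamental inequality $\sum_i e_i f_i \le [L:K]$ to get $n\le [L:K]$, matched with the trivial upper bound. The paper instead works entirely at the level of the ultrametric inequality: it extends $\nu_p$ to $L$, takes a putative $K$-linear relation $\sum_{k=0}^{n-1}\lambda_k\alpha^{k/n}=0$, and applies its Lemma~\ref{L:preval} to see that the nonzero terms $\lambda_k\alpha^{k/n}$ all have pairwise distinct $\nu_p$-values (this is where coprimality of $\nu_p(\alpha)$ and $n$ enters), so the valuation of the sum is the finite minimum of those values rather than $\infty$, a contradiction. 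Both arguments hinge on the same arithmetic fact that the residues $k\nu_p(\alpha)/n \bmod \mathbb{Z}$ are distinct for $0\le k\le n-1$; you package it into a value-group index computation and appeal to a higher-level theorem, while the paper uses it directly to prove linear independence of the powers $\alpha^{k/n}$ over $K$, which is more elementary and dovetails with Lemma~\ref{L:preval} that the paper reuses elsewhere (notably in Lemma~\ref{L:groslambda}). Your version is shorter if one is willing to cite the fundamental inequality; the paper's is self-contained modulo the ultrametric inequality.
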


\begin{proof}
We extend $\nu_p$ to $K(\alpha^{\frac{1}{n}})$. By abuse of notation, we still denote $\nu_p$ the extension. Let $\famm{\lambda_k}{0\le k\le n-1}$ be a family of $K$ such that: $\sum_{k=0}^n\lambda_k\alpha^{\frac{k}{n}}=0$.

Note that, for all $0\le k\le n$, we have $\nu_p(\lambda_k\alpha^{\frac{k}{n}})=\nu_{1,p}(\lambda_k)+\frac{k}{n}\nu_p(\alpha)$.

Set $I=\setm{k\in\mathbb{N}}{k\le n-1\text{ and }\lambda_k\not=0}$. It follows from Lemma~\ref{L:preval} that for all $k\not=\ell$ in $I$ we have $\nu_p(\lambda_k)+\frac{k}{n}\nu_p(\alpha)\not=\nu_p(\lambda_\ell)+\frac{\ell}{n}\nu_p(\alpha)$. Therefore, if $I$ is not empty, $\nu_p(\sum_{k=0}^n\lambda_k\alpha^{\frac{k}{n}})$ is the minimum of $\nu_p(\lambda_k)+\frac{k}{n}\nu_p(\alpha)$ for $k\in I$. Therefore $\nu_p(0)\not=\infty$; a contradiction.
\end{proof}

\begin{lemma}\label{L:trace}
Let $K$ be a field, let $\alpha$ be in $K$, let $n$ be an integer $\geq 2$. Assume that $[K(\alpha^{\frac{1}{n}}):K] = n$. Let $k\in\mathbb{Z}$ be such that $k$ is not a multiple of $n$. Then $\Tr_{K(\alpha^{\frac{1}{n}})/K}(\alpha^{\frac{k}{n}})=0$.
\end{lemma}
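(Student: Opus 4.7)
The plan is to reduce to the case $0 < k < n$ and then compute the trace directly using the explicit power basis.

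First I observe that the hypothesis $[K(\alpha^{\frac{1}{n}}):K] = n$ implies that the minimal polynomial of $\beta := \alpha^{\frac{1}{n}}$ over $K$ is $X^n - \alpha$, so $\{1,\beta,\beta^2,\dots,\beta^{n-1}\}$ is a $K$-basis of $K(\beta)$. In particular $\beta \neq 0$, hence $\alpha = \beta^n \neq 0$, so negative powers of $\alpha$ make sense. Using the relation $\beta^{k+n} = \alpha\beta^k$ and $K$-linearity of the trace, I can write any $k$ (positive or negative) as $k = qn + r$ with $0 \le r \le n-1$ and get $\Tr_{K(\beta)/K}(\beta^k) = \alpha^q\Tr_{K(\beta)/K}(\beta^r)$. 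Since $k$ is not a multiple of $n$, we have $r \neq 0$. Thus it suffices to prove the statement for $0 < r < n$.

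Next, I compute the matrix $M$ of multiplication by $\beta^r$ in the basis $\{1,\beta,\dots,\beta^{n-1}\}$. For $0 \le j \le n-r-1$ we have $\beta^r\cdot\beta^j = \beta^{r+j}$, which contributes a $1$ in row $r+j$ of column $j$. For $n-r \le j \le n-1$ we have $\beta^r\cdot\beta^j = \alpha\,\beta^{r+j-n}$, which contributes $\alpha$ in row $r+j-n$ of column $j$. In the first case the row index is $r+j$, differing from the column index $j$ by $r \neq 0$; in the second case the row index is $r+j-n$, differing from $j$ by $r-n \neq 0$. Hence every diagonal entry of $M$ is zero, so $\Tr_{K(\beta)/K}(\beta^r) = \operatorname{tr}(M) = 0$, as required.

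There is no real obstacle here: the only mild point is handling negative $k$ and reducing modulo $n$, which is why the invertibility of $\alpha$ (coming automatically from the degree hypothesis) is worth noting explicitly. Alternatively, one could invoke the formula $\Tr_{K(\beta)/K}(\beta^r) = [K(\beta):K(\beta^r)]\cdot\Tr_{K(\beta^r)/K}(\beta^r)$ together with the fact that the minimal polynomial of $\beta^r$ over $K$ is $X^{n/d} - \alpha^{r/d}$ where $d = \gcd(r,n)$, so that $n/d \ge 2$ and the coefficient of $X^{n/d - 1}$ vanishes; but the matrix-trace argument is shorter and avoids any discussion of $\gcd$.
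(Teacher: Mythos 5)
Your proof is correct, and it takes a genuinely different route from the paper's. The paper sets $d=\gcd(k,n)$, observes that $X^{n/d}-\alpha^{k/d}$ is the minimal polynomial of $\alpha^{k/n}$ over $K$ (so that $\Tr_{K(\alpha^{k/n})/K}(\alpha^{k/n})$ is, up to sign, the vanishing coefficient of $X^{n/d-1}$), and then applies transitivity of the trace. You instead reduce to $0<r<n$ via $\beta^{k}=\alpha^{q}\beta^{r}$ and compute the trace directly as the trace of the multiplication-by-$\beta^{r}$ matrix in the power basis, noting every diagonal entry vanishes because the companion-type structure shifts indices by $r\not\equiv 0\pmod n$. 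The two arguments are about equally long, but yours avoids any discussion of $\gcd$ and the identification of the intermediate field $K(\alpha^{k/n})$, while the paper's is cleaner if one is comfortable with the standard facts about minimal polynomials of radicals and trace transitivity — you explicitly flag this alternative at the end of your write-up, and it is indeed essentially what the paper does. One small bonus of your reduction step is that it makes visible why negative $k$ is harmless (namely $\alpha\ne 0$, hence invertible), a point the paper leaves implicit. As a minor aside, the paper's proof as written contains a likely typo: it refers to ``the minimal polynomial of $\alpha^{\frac{k}{d}}$'' where it surely means $\alpha^{\frac{k}{n}}$ (the former is an integer power of $\alpha$ and lies in $K$).
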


\begin{proof}
Set $d = ( k, n )$ and let $u,v$ in $\mathbb{Z}$ be such that $ku+nv=d$. Since $[K ( \alpha^{\frac{1}{n}} ) : K] = n$, we have that $X^{\frac{n}{d}}-\alpha^{\frac{k}{d}}$ is the minimal polynomial of $\alpha^{\frac{k}{d}}$ over $K$.
It follows that $\Tr_{K( \alpha^{\frac{1}{n}} )/K}(\alpha^{\frac{k}{n}})=0$.
\end{proof}

In the sequel, we are  interested in the case when $K$ is a quadratic imaginary field and $\alpha \in K$ such that $\alpha$ has absolute value equal to $1$. In the following Lemma we apply some of the previous results in this particular case.

\begin{lemma}\label{L:entieralgebrique}
Let $a,b$ be relatively prime in $\mathbb{N}$, with $a<b$. Let $\alpha$ be equal to $\frac{a+i\sqrt{b^2-a^2}}{b}$. Let $n\ge 1$ be an integer. Assume that $\alpha^{\frac{1}{n}}$ generates an extension of degree $n$ over $\mathbb{Q}(i\sqrt{b^2-a^2})$. Let $\famm{\lambda_k}{-n+1\le k\le n-1}$ be a family in $\mathbb{Q}$. Assume that
\[\sum_{k=-n+1}^{n-1}\lambda_k\alpha^{\frac{k}{n}}\quad\text{is an algebraic integer.}
\]
Then, for all $1\le k\le n-1$, both $nb\lambda_{-k}+n\lambda_{n-k}(a+i\sqrt{b^2-a^2})$ and $nb\lambda_{k}+n\lambda_{k-n}(a-i\sqrt{b^2-a^2})$ are algebraic integers, and $n\lambda_0$ is an integer.
\end{lemma}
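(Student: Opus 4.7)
The plan is to treat $\gamma=\sum_{k=-n+1}^{n-1}\lambda_k\alpha^{k/n}$ as an element of $L:=K(\alpha^{1/n})$, where $K=\mathbb{Q}(i\sqrt{b^2-a^2})$, and to isolate its coefficients via traces down to $K$. Since $\alpha\bar\alpha=1$, one has $\alpha^{-1}=\bar\alpha=\frac{a-i\sqrt{b^2-a^2}}{b}$, and for $1\le k\le n-1$ the identity $\alpha^{-k/n}=\bar\alpha\,\alpha^{(n-k)/n}$ lets me rewrite $\gamma$ in the $K$-basis $\{\alpha^{k/n}:0\le k\le n-1\}$ of $L$: after reindexing,
\[
\gamma=\lambda_0+\sum_{k=1}^{n-1}\mu_k\,\alpha^{k/n},\qquad b\mu_k=b\lambda_k+\lambda_{k-n}(a-i\sqrt{b^2-a^2}).
\]
A symmetric expansion in the basis $\{\alpha^{-k/n}:0\le k\le n-1\}$ gives $\gamma=\lambda_0+\sum_{k=1}^{n-1}\nu_k\,\alpha^{-k/n}$ with $b\nu_k=b\lambda_{-k}+\lambda_{n-k}(a+i\sqrt{b^2-a^2})$.

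I then check that $b\alpha^{\pm k/n}$ is an algebraic integer for every $1\le k\le n-1$. Writing $\beta=a+i\sqrt{b^2-a^2}$, so that $b\alpha=\beta$ and $b\bar\alpha=\bar\beta$ are both algebraic integers, the identities
\[
(b\alpha^{k/n})^n=b^{n-k}\beta^k,\qquad (b\alpha^{-k/n})^n=b^{n-k}\bar\beta^k
\]
exhibit $b\alpha^{\pm k/n}$ as an $n$th root of an algebraic integer, hence itself an algebraic integer.

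The key step is a trace computation. For $1\le k\le n-1$, the product $\gamma\cdot b\alpha^{-k/n}\in\mathcal{O}_L$, and the first basis expansion gives
\[
\gamma\cdot b\alpha^{-k/n}=b\lambda_0\,\alpha^{-k/n}+\sum_{j=1}^{n-1}b\mu_j\,\alpha^{(j-k)/n}.
\]
By Lemma~\ref{L:trace}, $\Tr_{L/K}(\alpha^{m/n})=0$ whenever $n\nmid m$. Since $k$ and every $|j-k|$ are strictly less than $n$, the only exponent divisible by $n$ is $j-k=0$, i.e.\ $j=k$, which contributes $n\cdot b\mu_k$. Therefore $\Tr_{L/K}(\gamma\cdot b\alpha^{-k/n})=nb\mu_k\in\mathcal{O}_K$, which is item~(2). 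A symmetric computation with $\gamma\cdot b\alpha^{k/n}$ in the second basis yields $nb\nu_k\in\mathcal{O}_K$, giving item~(1). Finally $\Tr_{L/K}(\gamma)=n\lambda_0$, which lies in $\mathcal{O}_K\cap\mathbb{Q}=\mathbb{Z}$, proving item~(3); the degenerate case $n=1$ is trivial.

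The only real subtlety is the initial basis change: the $2n-1$ powers $\alpha^{k/n}$ for $-n+1\le k\le n-1$ are $K$-linearly dependent since $[L:K]=n$, and one must absorb the factor $\alpha^{\pm 1}\in K$ into the scalars (producing the combinations $\lambda_k+\lambda_{k-n}\bar\alpha$ and $\lambda_{-k}+\lambda_{n-k}\alpha$) before the trace argument cleanly picks out the intended coefficients. Once this is done, the extra factor of $b$ needed to clear denominators from the powers of $\alpha^{1/n}$ appears naturally, and the argument reduces to the orthogonality of the trace form on the basis.
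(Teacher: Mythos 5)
Your proof is correct and follows essentially the same approach as the paper's: both establish integrality of $b\alpha^{\pm k/n}$ from the identity $(b\alpha^{\pm k/n})^n=b^{n-k}(a\pm i\sqrt{b^2-a^2})^k$, multiply $\gamma$ by $b\alpha^{\mp k/n}$, and apply Lemma~\ref{L:trace} to kill all but the desired coefficient. Your intermediate rewriting of $\gamma$ in the $K$-basis $\{\alpha^{j/n}\}_{j=0}^{n-1}$ (absorbing $\alpha^{\pm 1}\in K$ into the scalars $\mu_j,\nu_j$) is a slight cosmetic reorganization of the same computation; the paper instead expands $\Tr_{L/K}(b\alpha^{\pm k/n}\gamma)$ directly over the full index range $-n+1\le\ell\le n-1$ and observes that exactly the two exponents $\ell=\mp k$ and $\ell=n\mp k$ survive, yielding the same linear combinations.
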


\begin{proof}
Set $\gamma=\sum_{k=-n+1}^{n-1}\lambda_k\alpha^{\frac{k}{n}}$. Set $K=\mathbb{Q}(i\sqrt{b^2-a^2})$ and set $L=K(\alpha^{\frac{1}{n}})$. It follows from Lemma~\ref{L:trace} that
\begin{equation}\label{E:tracezero}
\Tr_{L/K}(\alpha^{\frac{k}{n}})=0\,,\quad\text{for all $k\in\mathbb{Z}$ which is not a multiple of $n$}.
\end{equation}
Therefore $\Tr_{L/K}(\gamma)=\Tr_{L/K}(\lambda_0)=n\lambda_0$. However, $\gamma$ is an algebraic integer, therefore $n\lambda_0$ is an algebraic integer. As $\lambda_0\in\mathbb{Q}$, it follows that $n\lambda_0$ is an integer.

Let $1\le k\le n-1$ be an integer. The following equalities hold:
\[
(b\alpha^{\frac{k}{n}})^n=b^n\alpha^k=b^n\left(\frac{a+i\sqrt{b^2-a^2}}{b}\right)^k=b^{n-k}(a+i\sqrt{b^2-a^2})^k\,.
\]
Therefore $b\alpha^{\frac{k}{n}}$ is an algebraic integer. The following equalities hold
\begin{align*}
\Tr_{L/K}(b\alpha^{\frac{k}{n}}\gamma)&=\Tr_{L/K}\left(\sum_{\ell=-n+1}^{n-1}b\lambda_\ell\alpha^{\frac{\ell+k}{n}}\right)\\
&=\Tr_{L/K}(b\lambda_{-k})+\Tr_{L/K}(b\lambda_{n-k}\alpha)\,,&&\text{by \eqref{E:tracezero}.}\\
&=nb\lambda_{-k}+nb\lambda_{n-k}\alpha\,,&&\text{as $b\lambda_{-k}$ and $b\lambda_{n-k}\alpha$ are in $K$.} \\
&=nb\lambda_{-k}+n\lambda_{n-k}(a+i\sqrt{b^2-a^2}).
\end{align*}
However, $b\alpha^{\frac{k}{n}}\gamma$ is an algebraic integer, therefore $nb\lambda_{-k}+n\lambda_{n-k}(a+i\sqrt{b^2-a^2})$ is an algebraic integer.

Similarly,
\[
(b\alpha^{\frac{-k}{n}})^n=b^n\alpha^{-k}=b^n\left(\frac{a+i\sqrt{b^2-a^2}}{b}\right)^{-k}=
b^n\left(\frac{a-i\sqrt{b^2-a^2}}{b}\right)^{k}=b^{n-k}(a-i\sqrt{b^2-a^2})^k\,.
\]
Hence $b\alpha^{\frac{-k}{n}}$ is an algebraic integer. The following equalities hold
\begin{align*}
\Tr_{L/K}(b\alpha^{\frac{-k}{n}}\gamma)&=\Tr_{L/K}\left(\sum_{\ell=-n+1}^{n-1}b\lambda_\ell\alpha^{\frac{\ell-k}{n}}\right)\\
&=\Tr_{L/K}(b\lambda_{k})+\Tr_{L/K}(b\lambda_{k-n}\alpha^{-1})\,,&&\text{by \eqref{E:tracezero}.}\\
&=nb\lambda_{k}+nb\lambda_{k-n}\alpha^{-1}\,,&&\text{as $b\lambda_{k}$ and $b\lambda_{k-n}\alpha^{-1}$ are in $K$.} \\
&=nb\lambda_{k}+n\lambda_{k-n}(a-i\sqrt{b^2-a^2}).
\end{align*}
However, $b\alpha^{\frac{-k}{n}}\gamma$ is an algebraic integer, therefore $nb\lambda_{k}+n\lambda_{k-n}(a-i\sqrt{b^2-a^2})$ is an algebraic integer.
\end{proof}

Let $K$ and $\alpha$ be as in the previous Lemma. Since $\alpha$ has absolute value equal to $1$, there exists $\theta \in \R$ such that $\alpha = \cos ( \theta ) + i \sin ( \theta )$. In various cases, we shall need some basic trigonometric formulas, as Euler's formula, from which it follows the following result.

\begin{lemma}\label{L:sommecos}
Let $\lambda$, $\mu$, $x$ and $y$ in $\mathbb{R}$. Then:
\[
\lambda\cos(x)+\mu\cos(y)=(\lambda+\mu)\cos\left(\frac{x+y}{2}\right)\cos\left(\frac{x-y}{2}\right)+(\mu-\lambda)\sin\left(\frac{x+y}{2}\right)\sin\left(\frac{x-y}{2}\right)
\]
\end{lemma}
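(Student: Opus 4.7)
The identity is a mild generalization of the classical sum-to-product formula, and the plan is to reduce it directly to the cosine addition formula. The key observation is the symmetric rewriting
\[
x=\frac{x+y}{2}+\frac{x-y}{2},\qquad y=\frac{x+y}{2}-\frac{x-y}{2}.
\]
Applying $\cos(u\pm v)=\cos(u)\cos(v)\mp\sin(u)\sin(v)$ with $u=\frac{x+y}{2}$ and $v=\frac{x-y}{2}$, I would obtain expressions for $\cos(x)$ and $\cos(y)$ purely in terms of products of cosines and sines of $u$ and $v$.

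The next step is simply to multiply the first identity by $\lambda$ and the second by $\mu$, and add. The cosine-cosine contributions combine with coefficient $\lambda+\mu$, while the sine-sine contributions combine with coefficient $-\lambda+\mu=\mu-\lambda$ (because the sign on the sine term flips between $\cos(x)$ and $\cos(y)$). Collecting gives exactly
\[
\lambda\cos(x)+\mu\cos(y)=(\lambda+\mu)\cos\!\left(\tfrac{x+y}{2}\right)\cos\!\left(\tfrac{x-y}{2}\right)+(\mu-\lambda)\sin\!\left(\tfrac{x+y}{2}\right)\sin\!\left(\tfrac{x-y}{2}\right).
\]
There is essentially no obstacle here; the only thing to be careful about is tracking the signs in the two addition formulas, since specializing to $\lambda=\mu=1$ must recover the classical identity $\cos(x)+\cos(y)=2\cos(\tfrac{x+y}{2})\cos(\tfrac{x-y}{2})$ (and indeed $\mu-\lambda=0$ kills the sine term), which serves as a sanity check. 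The case $\lambda=1,\mu=-1$ recovers $\cos(x)-\cos(y)=-2\sin(\tfrac{x+y}{2})\sin(\tfrac{x-y}{2})$, giving a second consistency check before concluding.
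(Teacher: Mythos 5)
Your proof is correct, and since the paper gives no explicit proof (it merely remarks that the lemma follows from basic trigonometric formulas such as Euler's formula), your elementary derivation via the cosine addition formula applied to $x=\frac{x+y}{2}+\frac{x-y}{2}$ and $y=\frac{x+y}{2}-\frac{x-y}{2}$ is exactly the intended argument. The sanity checks at $\lambda=\mu=1$ and $\lambda=1,\mu=-1$ are a nice touch but not logically necessary.
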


Similarly the following lemma is a consequence of Euler's formula.

\begin{lemma}\label{L:Euler2}
For all $\lambda,\mu,\theta,n\not=0$, and $k$ in $\R$ the following equality holds.
\begin{multline*}
\left(\lambda\cos\frac{k\theta}{n} + \mu\cos\frac{(n-k)\theta}{n}\right)^2 
= \frac{1}{2}\Bigg(\lambda^2+\mu^2+2\lambda\mu\cos\theta + \\
 + (\lambda^2-\mu^2) \cos\frac{2k\theta}{n} + 2(\lambda\mu+\mu^2\cos\theta) \cos\frac{(n-2k)\theta}{n}\Bigg)
\end{multline*}
\end{lemma}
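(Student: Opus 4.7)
The plan is to expand the square on the left-hand side and reduce to the right-hand side by three elementary trigonometric identities: the double-angle formula $\cos^2 X = \frac{1}{2}(1+\cos 2X)$, the product-to-sum formula $2\cos A\cos B = \cos(A-B)+\cos(A+B)$, and the sum-to-product formula of Lemma~\ref{L:sommecos}.

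First I would set $A = k\theta/n$ and $B = (n-k)\theta/n$, so that $A+B = \theta$ and $A-B = (2k-n)\theta/n$. Expanding yields
\[
(\lambda\cos A + \mu\cos B)^2 = \lambda^2\cos^2 A + 2\lambda\mu\cos A\cos B + \mu^2\cos^2 B.
\]
Applying the double-angle formula to the two outer terms and the product-to-sum formula to the middle term, and using that $\cos$ is even to rewrite $\cos((2k-n)\theta/n) = \cos((n-2k)\theta/n)$, I obtain
\[
\frac{\lambda^2+\mu^2}{2} + \lambda\mu\cos\theta + \lambda\mu\cos\frac{(n-2k)\theta}{n} + \frac{\lambda^2}{2}\cos\frac{2k\theta}{n} + \frac{\mu^2}{2}\cos\frac{2(n-k)\theta}{n}.
\]

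To bring this into the target form, I would split $\frac{\lambda^2}{2}\cos\frac{2k\theta}{n} = \frac{\lambda^2-\mu^2}{2}\cos\frac{2k\theta}{n} + \frac{\mu^2}{2}\cos\frac{2k\theta}{n}$ and then combine the two remaining $\mu^2/2$ cosine terms using Lemma~\ref{L:sommecos} in its symmetric case (equal coefficients, so that the sine contribution vanishes): with $x = 2k\theta/n$ and $y = 2(n-k)\theta/n$ one has $(x+y)/2 = \theta$ and $(x-y)/2 = -(n-2k)\theta/n$, hence the pair combines to $\mu^2\cos\theta\cos\frac{(n-2k)\theta}{n}$. Factoring $\cos\frac{(n-2k)\theta}{n}$ out of the two resulting terms and multiplying through by $2/2$ gives exactly the right-hand side of the lemma.

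No serious obstacle is expected: the statement is a routine trigonometric identity. The only nonobvious step is the splitting above, engineered so that Lemma~\ref{L:sommecos} applies with equal coefficients (the unique case where the sine contribution disappears); all other manipulations are direct applications of standard formulas.
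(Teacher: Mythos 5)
Your proof is correct: each step checks out, the split of $\tfrac{\lambda^2}{2}\cos\frac{2k\theta}{n}$ into a $\lambda^2-\mu^2$ piece and a $\mu^2$ piece followed by the equal-coefficient case of Lemma~\ref{L:sommecos} is exactly the manipulation needed, and the algebra recombines to the stated right-hand side. The paper gives no explicit proof, remarking only that the identity ``is a consequence of Euler's formula,'' and your argument via the double-angle, product-to-sum and sum-to-product identities (all of which are themselves elementary consequences of Euler's formula) is precisely the kind of routine verification that remark leaves to the reader.
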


\begin{lemma}\label{L:radiEntierAlg}
Let $0<a<b$ be relatively prime integers, set $\alpha=\frac{a+i\sqrt{b^2-a^2}}{b}$. Assume that $b$ is even. Let $q\ge 0$ be in $\mathbb{Q}$. Then $\radi{\frac{b}{2}}{q}\alpha^{q}$ is an algebraic integer.
\end{lemma}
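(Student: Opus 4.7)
The plan is to verify, at every place $v$ of $\overline{\Q}$, that $\radi{b'}{q}\alpha^q$ has non-negative $v$-valuation, where $b'=b/2\in\Z$. The identities $\alpha\bar\alpha=1$ and $\alpha+\bar\alpha=2a/b$ show that $\alpha$ satisfies $b'X^2-aX+b'=0$; multiplying by $b'$, the element $b'\alpha$ is a root of the monic integer polynomial $X^2-aX+(b')^2$, hence is an algebraic integer whose Galois conjugate over $\Q$ is $b'\bar\alpha$.

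The key step is the following valuation computation. For every prime $p$ and every valuation $v$ of $\overline{\Q}$ extending $v_p$, both $b'\alpha$ and $b'\bar\alpha$ are algebraic integers, so $v(b'\alpha),v(b'\bar\alpha)\ge 0$; moreover their sum equals $a$ and their product equals $(b')^2$. If $p\mid b'$ then $p\mid b$, so $\gcd(a,b)=1$ yields $v_p(a)=0$; this forces $\min\{v(b'\alpha),v(b'\bar\alpha)\}=v(a)=0$ and $v(b'\alpha)+v(b'\bar\alpha)=2v_p(b')$, so $\{v(\alpha),v(\bar\alpha)\}=\{v_p(b'),-v_p(b')\}$. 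If $p\nmid b'$ then $v_p((b')^2)=0$, so both $v(b'\alpha)$ and $v(b'\bar\alpha)$ vanish, whence $v(\alpha)=0$. In particular $v(\alpha)\ge -v_p(b')$ in every case.

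Finally, for a rational $q=m/n\ge 0$, extending $v$ to a field containing an $n$-th root of $\alpha$ gives $v(\alpha^q)=q\,v(\alpha)\ge -q\,v_p(b')$. Since by the definition of $\radi{b'}{q}$ one has $v_p(\radi{b'}{q})=\Ceil{q\,v_p(b')}$ for every prime $p$, we obtain
\[
v\bigl(\radi{b'}{q}\alpha^q\bigr)\ge\Ceil{q\,v_p(b')}-q\,v_p(b')\ge 0
\]
at every place of $\overline{\Q}$, which shows that $\radi{b'}{q}\alpha^q$ is an algebraic integer. The one delicate point in the argument is the prime $p=2$ when $v_2(b)=1$: here $b'$ is odd so $p\nmid b'$, but because $\gcd(a,b)=1$ also forces $a$ odd, the ``$p\nmid b'$'' branch of the valuation dichotomy applies without modification.
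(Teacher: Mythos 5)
Your proof is correct and rests on the same key observation as the paper's: $\frac{b}{2}\alpha$ is a root of the monic integer polynomial $X^2 - aX + (b/2)^2$, hence an algebraic integer, and the factor $\radi{b/2}{q}$ neutralises the denominator $(b/2)^q$. The paper packages this as a product of two algebraic integers, $\radi{b/2}{q}(b/2)^{-q}\cdot(\tfrac{b}{2}\alpha)^q$, whereas you unfold the same computation place by place; your dichotomy on $\{v(\alpha),v(\bar\alpha)\}$ is more than is needed (only $v(\alpha)\ge -v_p(b')$, immediate from $b'\alpha$ being integral, is used), but the argument is sound.
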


\begin{proof}
As $b$ is even, it follows that $\frac{a+i\sqrt{b^2-a^2}}{2}$ is an algebraic integer. The following equalities hold
\begin{align*}
\radi{\frac{b}{2}}{q}\alpha^{q} &= \radi{\frac{b}{2}}{q} \left(\frac{a+i\sqrt{b^2-a^2}}{b}\right)^q\\
&=\radi{\frac{2}{b}}{q} \left(\frac{2}{b}\right)^{-q}\left(\frac{a+i\sqrt{b^2-a^2}}{2}\right)^q.
\end{align*}
From Lemma~\ref{L:radi} we have $\radi{\frac{2}{b}}{q} \left(\frac{2}{b}\right)^{-q}$ is an algebraic integer, moreover $\frac{a+i\sqrt{b^2-a^2}}{2}$ is an algebraic integer. Therefore $\radi{\frac{b}{2}}{q}\alpha^{q}$ is an algebraic integer.
\end{proof}

\begin{lemma}\label{L:cosqtheta}
Let $0<a<b$ be relatively prime integers, set $\theta=\arccos(\frac{a}{b})\in]0,\frac{\pi}{2}[$. Let $n\ge 2$ be an integer and $1\le k\le n-1$. Let $-\frac{1}{2}\le q\le \frac{1}{2}$ be in $\mathbb{Q}$. Then $2\sqrt{b}\cos(q\theta)$ and $2\sqrt{b}\sin(q\theta)$ are algebraic integers.
\end{lemma}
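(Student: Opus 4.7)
The plan is to reduce everything to showing that $\sqrt{b}\,\alpha^{\pm q}$ is an algebraic integer. Since $\cos\theta = a/b$ and $\sin\theta = \sqrt{b^2-a^2}/b$, one has $\alpha = e^{i\theta}$, so choosing the branch $\alpha^q := e^{iq\theta} = \cos(q\theta) + i\sin(q\theta)$ yields $\alpha^{-q} = \cos(q\theta) - i\sin(q\theta)$ and hence the identities
\[
2\sqrt{b}\cos(q\theta) = \sqrt{b}\,\alpha^{q} + \sqrt{b}\,\alpha^{-q},\qquad 2\sqrt{b}\sin(q\theta) = -i\bigl(\sqrt{b}\,\alpha^{q} - \sqrt{b}\,\alpha^{-q}\bigr).
\]
Since $i$ is an algebraic integer and the algebraic integers form a ring, it suffices to prove that each of $\sqrt{b}\,\alpha^{q}$ and $\sqrt{b}\,\alpha^{-q}$ is an algebraic integer.

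For this, I would write $q = s/t$ with $t \ge 1$, which by $-\tfrac12 \le q \le \tfrac12$ gives $|s| \le t/2$, i.e.\ $t - 2|s| \ge 0$. I then compute
\[
\bigl(\sqrt{b}\,\alpha^{q}\bigr)^{2t} = b^{t}\alpha^{2s}.
\]
If $s \ge 0$, factor this as $b^{\,t-2s}(b\alpha)^{2s}$, which is an algebraic integer because $b\alpha = a + i\sqrt{b^{2}-a^{2}}$ is an algebraic integer (root of $X^{2} - 2aX + b^{2}$) and $t-2s \ge 0$. If $s < 0$, factor instead as $b^{\,t-2|s|}(b\alpha^{-1})^{2|s|}$ and use that $b\alpha^{-1} = a - i\sqrt{b^{2}-a^{2}}$ is likewise an algebraic integer. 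In either case $(\sqrt{b}\,\alpha^{q})^{2t}$ is an algebraic integer, so $\sqrt{b}\,\alpha^{q}$ — being a root of the monic polynomial $X^{2t} - (\sqrt{b}\,\alpha^{q})^{2t}$ with algebraic integer coefficients — is itself an algebraic integer. The same argument, replacing $q$ by $-q$, handles $\sqrt{b}\,\alpha^{-q}$, and substituting into the two displayed identities completes the proof.

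The argument is essentially bookkeeping; the only non-obvious point is choosing the correct exponent, namely $2t$, so that the fractional power of $\alpha$ disappears while the bound $|s| \le t/2$ leaves a nonnegative power of $b$ to absorb the denominator coming from $b\alpha \in \cO_{K}$ rather than $\alpha \in \cO_{K}$. The hypothesis $n \ge 2$, $1 \le k \le n-1$ plays no role in the statement and is not used in the proof.
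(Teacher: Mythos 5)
Your proof is correct and follows essentially the same route as the paper's: both reduce to showing $\sqrt{b}\,e^{\pm iq\theta}$ is an algebraic integer by raising to a suitable power and absorbing the denominator of $\alpha$ into the algebraic integer $b\alpha = a+i\sqrt{b^2-a^2}$. The only cosmetic differences are that you raise to the power $2t$ (cleanly avoiding the half-integer exponent $b^{v/2-u}$ that the paper implicitly reads as $(\sqrt b)^{v-2u}$) and you spell out the $s<0$ case where the paper silently assumes $q\ge 0$; your observation that the hypotheses on $n,k$ are vestigial is also accurate.
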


\begin{proof}
We can assume that $q=\frac{u}{v}$ with $u,v$ positive integers. The following equalities hold:
\[
(\sqrt{b}e^{i\frac{u}{v}\theta})^v=b^{\frac{v}{2}} e^{iu\theta}=b^{\frac{v}{2}-u}(be^{i\theta})^u=b^{\frac{v}{2}-u}\left(a+i\sqrt{b^2-a^2}\right)^u\,.
\]
As $\frac{v}{2}\ge u$ it follows that $b^{\frac{v}{2}-u}$ is an algebraic integer, moreover $(a+i\sqrt{b^2-a^2})^u$ is an algebraic integer. Therefore $\sqrt{b}e^{i\frac{u}{v}\theta}$ is an algebraic integer.

Similarly 
\[
(\sqrt{b}e^{-i\frac{u}{v}\theta})^v=b^{\frac{v}{2}} e^{-iu\theta}=b^{\frac{v}{2}-u}(be^{-i\theta})^u=b^{\frac{v}{2}-u}\left(a-i\sqrt{b^2-a^2}\right)^u\,.
\]
Hence $\sqrt{b}e^{-i\frac{u}{v}\theta}$ is algebraic integer. Therefore $2\sqrt{b}\cos(q\theta)=\sqrt{b}(e^{iq\theta}+e^{-iq\theta})$ is an algebraic integer. Similarly $2\sqrt{b}\sin(q\theta)$ is an algebraic integer.
\end{proof}

\begin{lemma}\label{L:SommeCosEntierAlgebrique}
Let $0<a<b$ be relatively prime integers, let $\theta=\arccos(\frac{a}{b})\in]0,\frac{\pi}{2}[$. Let $n\ge 2$ be an integer and $1\le k\le n-1$. Let $\lambda$ and $\mu$ in $\mathbb{Q}$ be such that $( \lambda+\mu )\sqrt{b+a}$ and $( \mu-\lambda )\sqrt{b+a}$ are algebraic integers. Then $4b( \lambda \cos(\frac{k\theta}{n}) + \mu \cos(\frac{(n-k)\theta}{n}))$ is an algebraic integer.
\end{lemma}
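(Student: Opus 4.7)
The plan is to apply the sum-to-product identity of Lemma~\ref{L:sommecos} with $x=k\theta/n$ and $y=(n-k)\theta/n$. The half-sum is $(x+y)/2=\theta/2$ and the half-difference is $(x-y)/2=q\theta$ where $q=(2k-n)/(2n)$. Because $1\le k\le n-1$, we have $|q|\le (n-2)/(2n)<1/2$, so $q$ lies in the range covered by Lemma~\ref{L:cosqtheta}. Lemma~\ref{L:sommecos} then rewrites the sum as
\[\lambda\cos(k\theta/n)+\mu\cos((n-k)\theta/n)=(\lambda+\mu)\cos(\theta/2)\cos(q\theta)+(\mu-\lambda)\sin(\theta/2)\sin(q\theta).\]

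Next, I would use the half-angle formulas. Since $\cos\theta=a/b$, the identities $2\cos^2(\theta/2)=1+a/b$ and $2\sin^2(\theta/2)=1-a/b$ give $2\sqrt{b}\cos(\theta/2)=\sqrt{2}\sqrt{b+a}$ and $2\sqrt{b}\sin(\theta/2)=\sqrt{2}\sqrt{b-a}$. I multiply the preceding identity by $4b=(2\sqrt{b})(2\sqrt{b})$ and distribute one factor of $2\sqrt{b}$ onto each trigonometric factor, producing
\[4b\bigl(\lambda\cos(k\theta/n)+\mu\cos((n-k)\theta/n)\bigr)=\sqrt{2}(\lambda+\mu)\sqrt{b+a}\cdot 2\sqrt{b}\cos(q\theta)+\sqrt{2}(\mu-\lambda)\sqrt{b-a}\cdot 2\sqrt{b}\sin(q\theta).\]

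Each factor on the right is then readily an algebraic integer: $\sqrt{2}$ and $\sqrt{b\pm a}$ are roots of the monic integer polynomials $X^2-2$ and $X^2-(b\pm a)$; the quantities $(\lambda+\mu)\sqrt{b+a}$ and $(\mu-\lambda)\sqrt{b-a}$ are algebraic integers by the hypotheses (the sum-to-product identity pairs $\lambda+\mu$ with the $\cos(\theta/2)$ branch, producing $\sqrt{b+a}$, and $\mu-\lambda$ with the $\sin(\theta/2)$ branch, producing $\sqrt{b-a}$); and $2\sqrt{b}\cos(q\theta)$, $2\sqrt{b}\sin(q\theta)$ are algebraic integers by Lemma~\ref{L:cosqtheta}. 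Since the set of algebraic integers of $\overline{\mathbb{Q}}$ forms a ring, the resulting sum is integral, which is the claim. The main delicacy is purely one of bookkeeping: the four distinct square-root factors $\sqrt{2}$, $\sqrt{b+a}$, $\sqrt{b-a}$, and $\sqrt{b}$ generated by the half-angle formulas must recombine precisely with the hypotheses and Lemma~\ref{L:cosqtheta} to absorb the $4b$ prefactor without leaving any denominators; the identity $4b=(2\sqrt{b})(2\sqrt{b})$ is exactly what makes this bookkeeping succeed.
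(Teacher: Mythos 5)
Your proof is correct and follows essentially the same route as the paper: apply Lemma~\ref{L:sommecos} to split the sum into $\cos(\theta/2)$ and $\sin(\theta/2)$ branches, substitute the half-angle values, and absorb $4b$ so that each factor is integral by the hypotheses and Lemma~\ref{L:cosqtheta}. One small point worth flagging: the lemma as stated says the hypothesis is that $(\mu-\lambda)\sqrt{b+a}$ is an algebraic integer, but both your proof and the paper's actually use $(\mu-\lambda)\sqrt{b-a}$ (which is what the $\sin(\theta/2)$ branch produces), so the lemma statement has a typo that you have silently and correctly repaired.
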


\begin{proof}
Set:
\[
\gamma = \lambda \cos\left(\frac{k\theta}{n}\right) + \mu \cos\left(\frac{(n-k)\theta}{n}\right)\,.
\]
It follows from Lemma~\ref{L:sommecos} that 
\begin{align*}
\gamma &= ( \lambda+\mu )\cos\left(\frac{\theta}{2}\right)\cos\left(\frac{(2k-n)\theta}{2n}\right) + ( \mu - \lambda )\sin\left(\frac{\theta}{2}\right)\sin\left(\frac{(2k-n)\theta}{2n}\right)\,.
\end{align*}
Note that, since $\theta=\arccos(\frac{a}{b})\in]0,\frac{\pi}{2}[$,
\[
\cos\left(\frac{\theta}{2}\right)=\sqrt{\frac{1}{2}(1+\cos(\theta))}=
\sqrt{\frac{1}{2}(1+\frac{a}{b})}=\sqrt{\frac{b+a}{2b}}\,.
\]
Similarly
\[
\sin\left(\frac{\theta}{2}\right)=\sqrt{\frac{b-a}{2b}}\,.
\]
Therefore
\begin{align*}
2\sqrt{2}b\gamma &= 
2\sqrt{2}b\left( ( \lambda+\mu )\sqrt{\frac{b+a}{2b}}\cos\left(\frac{(2k-n)\theta}{2n}\right) + ( \mu - \lambda )\sqrt{\frac{b-a}{2b}}\sin\left(\frac{(2k-n)\theta}{2n}\right)\right)\\
&= ( \lambda+\mu )\sqrt{b+a}2\sqrt{b}\cos\left(\frac{(2k-n)\theta}{2n}\right) + 
( \mu - \lambda )\sqrt{b-a}2\sqrt{b}\sin\left(\frac{(2k-n)\theta}{2n}\right).
\end{align*}
Note that $-\frac{1}{2}\le \frac{(2k-n)}{2n} \le \frac{1}{2}$. It follows, from Lemma~\ref{L:cosqtheta}, that $2\sqrt{b}\cos\left(\frac{(2k-n)\theta}{2n}\right)$ and $2\sqrt{b}\sin\left(\frac{(2k-n)\theta}{2n}\right)$ are algebraic integers. Moreover, we assumed that $( \lambda+\mu )\sqrt{b+a}$ and $( \mu - \lambda )\sqrt{b-a}$ are algebraic integers, therefore $4b\gamma$ is an algebraic integer.
\end{proof}

\begin{lemma}\label{L:existencevaluations}
Let $0<a<b$ be coprime integers. Set $\alpha = \frac{ a + i\sqrt{b^2 - a^2}}{b}$. Let $ p $ be a prime number dividing $b$ and, if $p = 2$ suppose that $4$ divides $b$.
Then $( p )$ splits in $\Q ( \alpha )$.
Let $P_1$ and $P_2$ be the prime ideals over $p$ and $\nu_{1, p}$, respectively $\nu_{2, p}$ the associated valuations.
Then, if $p$ is odd $\nu_{1, p} ( \alpha ) = - \nu_{2, p} ( \alpha ) = v_p ( b )$, where, as before, $v_p$ is the $p$-adic valuation.
On the other hand, if $p = 2$, $\nu_{1, 2}  ( \alpha ) = - \nu_{2, 2}  ( \alpha ) = v_2 ( b ) -1$.  
\end{lemma}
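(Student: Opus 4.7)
The plan is to set $\beta = a + i\sqrt{b^2-a^2}$, so that $\alpha = \beta/b$ in $K := \Q(\alpha) = \Q(i\sqrt{b^2-a^2})$, and to exploit the identity $\beta\bar\beta = b^2$, where $\bar{\,}$ denotes complex conjugation (the nontrivial element of $\mathrm{Gal}(K/\Q)$, since $K$ is imaginary quadratic). In both cases the strategy is the same: produce an algebraic integer $\delta\in\cO_K$ with $(p)\mid (\delta\bar\delta)$ but with $(\delta,\bar\delta)$ coprime to $(p)$, then observe that in the inert or ramified case the unique prime $P$ over $p$ would satisfy $\bar P=P$, so $P\mid\delta\bar\delta$ would force $P\mid(\delta,\bar\delta)$, contradicting coprimality. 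This yields the splitting of $(p)$, and the exact valuations of $\alpha$ then fall out of reading off $\nu_{1,p}(\delta)$ on the split side, using $e=1$.

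For odd $p\mid b$, I will take $\delta=\beta$. The ideal $(\beta,\bar\beta)$ contains $\beta+\bar\beta=2a$ and $\beta\bar\beta=b^2$; since $p$ is odd and $\gcd(a,b)=1$, $p\nmid 2a$, so $(\beta,\bar\beta)+(p)=\cO_K$. The dichotomy above forces $(p)=P_1P_2$ with $P_2=\bar P_1$, and coprimality forces exactly one of $P_1,P_2$ to divide $\beta$. Reindexing so that $P_1\mid\beta$, I get $\nu_{1,p}(\bar\beta)=0$, hence $\nu_{1,p}(\beta)=\nu_{1,p}(b^2)=2v_p(b)$. Therefore $\nu_{1,p}(\alpha)=v_p(b)$, and $\alpha\bar\alpha=1$ together with $P_2=\bar P_1$ yields $\nu_{2,p}(\alpha)=-v_p(b)$.

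For $p=2$ with $4\mid b$, the element $\beta$ fails coprimality at $2$ because $\beta+\bar\beta=2a$ is even, so I will replace $\beta$ by $\gamma := \beta/2 = (a+i\sqrt{b^2-a^2})/2$. Its monic minimal polynomial is $X^2-aX+b^2/4$, which has integer coefficients precisely because $4\mid b$, so $\gamma\in\cO_K$. Now $\gamma+\bar\gamma=a$ is odd (since $\gcd(a,b)=1$ and $2\mid b$), whence $(\gamma,\bar\gamma)+(2)=\cO_K$; since $v_2(b^2/4)\ge 2$, the same dichotomy gives $(2)=P_1P_2$ split. Reindexing so $P_1\mid\gamma$, I obtain $\nu_{1,2}(\gamma)=v_2(b^2/4)=2v_2(b)-2$, so $\nu_{1,2}(\beta)=1+(2v_2(b)-2)=2v_2(b)-1$, $\nu_{1,2}(\alpha)=v_2(b)-1$, and $\nu_{2,2}(\alpha)=1-v_2(b)$ by the same $\alpha\bar\alpha=1$ argument.

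The only non-routine step is recognizing that for $p=2$ the right test element is $\gamma=\beta/2$ rather than $\beta$ itself; the hypothesis $4\mid b$ is exactly what makes $\gamma$ an algebraic integer (and also what is needed for the divisibility $(2)\mid(\gamma\bar\gamma)$ that drives the splitting dichotomy), and the factor of $2$ being divided out is what produces the $-1$ shift in the final formula.
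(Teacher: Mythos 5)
Your proof is correct, and the key engine is the same as the paper's: $\alpha\bar\alpha=1$, read at each prime over $p$, forces the split and fixes the valuations. The difference is organizational, and your version is the cleaner of the two. For odd $p$ the paper first observes $(p)$ is unramified (since $p\nmid b^2-a^2$) and then rules out the inert case; you rule out inert and ramified in one stroke by noting that $\bar P = P$ together with $P\mid\beta\bar\beta$ would contradict the coprimality of $(\beta,\bar\beta)$ with $(p)$, which follows from $2a\in(\beta,\bar\beta)$. For $p=2$ the paper changes tack entirely and invokes $a^2-b^2\equiv 1\pmod 8$ plus quadratic reciprocity to get the splitting, then factors $(\alpha)$ by hand; your version instead reruns the identical coprimality argument with $\gamma=\beta/2$ (which is integral precisely because $4\mid b$, with $\gamma+\bar\gamma=a$ odd and $\gamma\bar\gamma=b^2/4$), so the two cases are treated uniformly and without appealing to reciprocity. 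Both proofs then read off $\nu_{1,p}(\alpha)$ from the norm relation using $e=1$ and transfer to $\nu_{2,p}$ via $\bar P_1=P_2$ and $\bar\alpha=\alpha^{-1}$, which is the same final step. (Incidentally, the paper's displayed minimal polynomial of $\beta/2$ has a typo, $x^2-ax+b^2$ instead of $x^2-ax+b^2/4$; you state it correctly.)
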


\begin{proof}
First observe that $\alpha \overline{\alpha} = 1$.
Let $R$ be the ring of the algebraic integers of $\Q ( \alpha )$.
Suppose first that $p$ is odd. 
Since $p$ does not divide $b^2 - a^2$, $( p )$ is not ramified in $ R $.
Suppose that $( p )$ divides $a + i\sqrt{b^2 - a^2}$ in $R$. 
Then, since $( p )$ is fixed by the complex conjugation, $p$ divides $b$ and $\alpha \overline{\alpha} = 1$, we should have $p$ divides $a + i\sqrt{b^2 - a^2}$ and $a - i\sqrt{b^2 - a^2}$ in $R$.
In this case, $( p )$ should divide $2a$ and this is not possible because $p$ is odd and $p$ does not divide $a$.
This proves that $p R  = P_1 P_2$, with $P_1$ and $P_2$ distinct prime ideals of $R$ and that, for every $i$, $j$ in $\{1, 2\}$, $i \neq j$, if $P_i$ divides $a + i\sqrt{b^2 - a^2}$, then $P_j$ divides $a - i\sqrt{b^2 - a^2}$.
Suppose that $P_1$ divides $a + i\sqrt{b^2 - a^2}$ and $P_2$ divides $a - i\sqrt{b^2 - a^2}$ and let $r$ be the exponent of $p$ in the prime factorization of $b$.
Then, since $\alpha \overline{\alpha} = 1$, by the unique prime decomposition of fractional ideal of $R$ with integer exponents, we get that the fractional ideal $( \alpha )$ generated by $\alpha$ can be written as
\begin{equation}\label{rel1}
( \alpha ) = P_1^{2 r } ( p )^{-r} I = P_1^r P_2^{-r} I,
\end{equation}
with $I$ fractional ideal coprime with $( p )$.
In fact if $t$ is the exponent of $P_1$ in the factorization of $a + i\sqrt{b^2 - a^2}$, since $P_2 = \overline{P_1}$ we have that $t$ is the exponent of $P_2$ in the factorization of $a - i\sqrt{b^2 - a^2}$.
Moreover, we have $( a + i\sqrt{b^2 - a^2} ) ( a - i\sqrt{b^2 - a^2} ) = b^2$.
Since $p^{2r}$ divides $b^2$ and $p^{2r + 1}$ does not divide $b^2$, we have $t = 2r$, proving (\ref{rel1}).

Let $\nu_{1, p}$ be respectively $\nu_{2, p}$ the valuations associated to $P_1$ respectively $P_2$.
Thus by (\ref{rel1}), $r = \nu_{1, p} ( \alpha ) = -\nu_{2, p} ( \alpha ) = v_p ( b )$.

Suppose now that $p = 2$.
Since $4$ divides $b$ and $a$ is coprime with $b$, $a^2 - b^2 \equiv 1 \mod ( 8 )$.
Then by the reciprocity quadratic law $( 2 )$ splits in the ring of the algebraic integers of $\Q ( \alpha )$.
Observe that $\frac{a + i\sqrt{b^2 - a^2}}{2}$ is a root of the polynomial $x^2 -a x + b^2$ with integer coefficients.
Then $( 2 )$ divides $a + i\sqrt{b^2 - a^2}$ and $a - i\sqrt{b^2 - a^2}$.
Moreover $( 4 )$ does not divide $a + i\sqrt{b^2 - a^2}$ and $a - i\sqrt{b^2 - a^2}$ because $\frac{a + i\sqrt{b^2 - a^2}}{4}$ has the trace equals to $\frac{ a }{2}$, which is not an integer.
Write $( 2 ) = P_1 P_2$ with $P_1$ and $P_2$ prime ideals of the ring of the algebraic integers of $\Q ( \alpha )$.
Since $\alpha \overline{\alpha} = 1$, $( 2 )$ divides $a + i\sqrt{b^2 - a^2}$ and $( 4 )$ does not divide $a + i\sqrt{b^2 - a^2}$, we get that there exists a positive integer $c$ and ideal $I_1$, $I_2$ coprime with $( 2 )$ such that
\begin{equation}\label{rel2}
( \alpha ) =  ( 2 ) P_1^{2 c} I_1 ( b )^{-1} = ( 2 ) P_1^{2 c} I_1 ( 2 )^{-c-1} I_2^{-1} = P_1^c I_1 P_2^{-c} I_2^{-1}.  
\end{equation}  
Let $\nu_{1, 2}$ be respectively $\nu_{2, 2}$ the valuations associated to $P_1$ respectively $P_2$.
Thus by (\ref{rel2}), $\nu_{1, 2} ( \alpha ) = c = - \nu_{2, 2} ( \alpha )$ and $v_2 ( b ) = c+1 = \nu_{1, 2} ( \alpha ) + 1$.  
\end{proof}
      
The following Corollary immediately follows from Lemmas~\ref{L:dim}, Lemma~\ref{L:trace}, and Lemma~\ref{L:existencevaluations}.

\begin{corollary}\label{C:TraceNul}
Let $0<a<b$ be coprime integers. Set $\alpha = \frac{ a + i\sqrt{b^2 - a^2}}{b}$. Let $p$ be a prime number dividing $b$. Let $n>0$ be an integer. If $p$ is odd we assume that $v_p(b)$ is coprime to $n$. If $p$ is even we assume that $4$ divides $b$, and that $v_2(b)-1$ is coprime to $n$. Then $[\Q(\alpha^{\frac{1}{n}}):\Q(\alpha)]=n$, and for all integers $k$ not multiple of $n$ we have $\Tr_{\Q(\alpha^{\frac{1}{n}})/\Q(\alpha)}(\alpha^{\frac{k}{n}})=0$.
\end{corollary}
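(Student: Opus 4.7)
The plan is to stitch together the three cited lemmas with $K=\Q(\alpha)$.

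First, I would apply Lemma~\ref{L:existencevaluations}. Under the hypotheses (if $p=2$ then $4\mid b$; if $p$ is odd, no extra condition is needed), the prime $p$ splits in $\Q(\alpha)$ as $(p)=P_1P_2$, and the lemma furnishes the associated valuations $\nu_{1,p},\nu_{2,p}$ with
\[
\nu_{1,p}(\alpha)=
\begin{cases} v_p(b) & \text{if } p \text{ is odd,}\\ v_2(b)-1 & \text{if } p=2.\end{cases}
\]
In either case our standing hypothesis says this integer is coprime to $n$. Moreover, since $p$ is unramified in $\Q(\alpha)$, the valuation $\nu_{1,p}$ attached to $P_1$ takes values in $\Z$ on all of $\Q(\alpha)^{\ast}$ (the ramification index is $1$, so the normalisation used in Section~\ref{sec1} coincides with the integer-valued $P_1$-adic valuation and extends $v_p$).

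Second, these are precisely the two hypotheses of Lemma~\ref{L:dim} with $K=\Q(\alpha)$ and $\nu_p=\nu_{1,p}$: condition~(1) is that $\nu_{1,p}(\alpha)$ is an integer coprime to $n$, and condition~(2) is that $\nu_{1,p}$ is integer-valued on $K^{\ast}$. Lemma~\ref{L:dim} therefore yields
\[
[\Q(\alpha^{\frac{1}{n}}):\Q(\alpha)]=n.
\]

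Third, with the degree now known, Lemma~\ref{L:trace} applies to $K=\Q(\alpha)$ and to any integer $k$ not divisible by $n$, giving $\Tr_{\Q(\alpha^{\frac{1}{n}})/\Q(\alpha)}(\alpha^{\frac{k}{n}})=0$, which is the second assertion.

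The only point that is not completely mechanical is checking that the valuations produced by Lemma~\ref{L:existencevaluations} satisfy the integer-valued hypothesis~(2) of Lemma~\ref{L:dim}; this is immediate from the fact, proved inside Lemma~\ref{L:existencevaluations}, that $p$ splits (hence is unramified) in $\Q(\alpha)$, so each $P_i$-adic valuation has value group $\Z$ on $\Q(\alpha)^{\ast}$. No further obstacle is expected.
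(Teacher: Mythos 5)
Your proof is correct and follows exactly the route the paper intends: the paper itself states that the corollary "immediately follows from Lemmas~\ref{L:dim}, \ref{L:trace}, and \ref{L:existencevaluations}," and your write-up simply makes that chain explicit, including the observation (also made in the paper's proof of Lemma~\ref{L:caracterisionentieralgebrique}) that $p$ splits and hence is unramified, so the associated valuation is $\Z$-valued and hypothesis~(2) of Lemma~\ref{L:dim} is met. The only trivial omission is the degenerate case $n=1$, where Lemma~\ref{L:trace} does not apply but the statement is vacuous anyway.
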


\begin{lemma}\label{L:basecosinus}
With the hypothesis of Corollary \textup{\ref{C:TraceNul}}, let $\theta=\arccos(\frac{a}{b})\in]0,\frac{\pi}{2}[$. Then $1,\cos\frac{\theta}{n},\cos\frac{2\theta}{n},\dots,\cos\frac{(n-1)\theta}{n}$ is a $\Q$-basis of $\Q(\cos\frac{\theta}{n})$. 

Moreover, $\Tr_{\Q(\cos\frac{\theta}{n})/\Q}\left(\cos\frac{k\theta}{n}\right)=0$ for all $1\le k\le n-1$.
\end{lemma}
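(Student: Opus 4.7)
The plan is to realize $\Q(\cos(\theta/n))$ as the maximal real subfield of $\Q(\alpha^{\frac{1}{n}})$ and transport Corollary \ref{C:TraceNul} through transitivity of the trace.

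First I would fix the $n$-th root $\alpha^{\frac{1}{n}} = e^{i\theta/n}$, so that $2\cos(k\theta/n) = \alpha^{\frac{k}{n}} + \alpha^{\frac{-k}{n}}$, and use $\alpha^{-1} = \overline\alpha \in \Q(\alpha)$ to write $\alpha^{\frac{-k}{n}} = \alpha^{-1}\alpha^{\frac{n-k}{n}} \in \Q(\alpha^{\frac{1}{n}})$. Hence $\Q(\cos(\theta/n)) \subseteq \Q(\alpha^{\frac{1}{n}})$. Since $\alpha^{\frac{1}{n}}$ satisfies $X^2 - 2\cos(\theta/n)X + 1$ over the real field $\Q(\cos(\theta/n))$ but is itself non-real, the extension $\Q(\alpha^{\frac{1}{n}})/\Q(\cos(\theta/n))$ has degree $2$ with complex conjugation as its nontrivial automorphism. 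Combined with $[\Q(\alpha^{\frac{1}{n}}):\Q(\alpha)] = n$ from Corollary \ref{C:TraceNul} and $[\Q(\alpha):\Q] = 2$, this forces $[\Q(\cos(\theta/n)):\Q] = n$, so to establish the basis assertion it suffices to prove $\Q$-linear independence of the $n$ elements $\cos(k\theta/n)$, $0 \le k \le n-1$.

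For linear independence I would take a relation $c_0 + \sum_{k=1}^{n-1} c_k\cos(k\theta/n) = 0$ with $c_k \in \Q$, rewrite each $2\cos(k\theta/n) = \alpha^{\frac{k}{n}} + \alpha^{-1}\alpha^{\frac{n-k}{n}}$, and collect coefficients in the $\Q(\alpha)$-basis $1, \alpha^{\frac{1}{n}}, \ldots, \alpha^{\frac{n-1}{n}}$ of $\Q(\alpha^{\frac{1}{n}})$ (whose minimal polynomial over $\Q(\alpha)$ is $X^n - \alpha$, by Corollary \ref{C:TraceNul}). This produces the system $c_0 = 0$ and $c_k + c_{n-k}\alpha^{-1} = 0$ for $1 \le k \le n-1$; since $\alpha^{-1} = \overline\alpha \notin \Q$ while $c_k, c_{n-k} \in \Q$, all $c_k$ must vanish.

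For the trace, complex conjugation being the nontrivial element of $\mathrm{Gal}(\Q(\alpha^{\frac{1}{n}})/\Q(\cos(\theta/n)))$ yields $\Tr_{\Q(\alpha^{\frac{1}{n}})/\Q(\cos(\theta/n))}(\alpha^{\frac{k}{n}}) = \alpha^{\frac{k}{n}} + \alpha^{\frac{-k}{n}} = 2\cos(k\theta/n)$, so by transitivity
\[
\Tr_{\Q(\alpha^{\frac{1}{n}})/\Q}(\alpha^{\frac{k}{n}}) = 2\,\Tr_{\Q(\cos(\theta/n))/\Q}(\cos(k\theta/n)).
\]
Transiting instead through $\Q(\alpha)$ and invoking Corollary \ref{C:TraceNul}, which kills $\Tr_{\Q(\alpha^{\frac{1}{n}})/\Q(\alpha)}(\alpha^{\frac{k}{n}})$ for $1 \le k \le n-1$, the left-hand side vanishes and the desired trace identity follows. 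Nothing in this argument is truly subtle; the only point requiring care is the identification of complex conjugation as the nontrivial automorphism of the degree-two extension $\Q(\alpha^{\frac{1}{n}})/\Q(\cos(\theta/n))$, which rests on $\Q(\cos(\theta/n))$ being real and $\alpha^{\frac{1}{n}}$ non-real.
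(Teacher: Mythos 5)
Your proof is correct and follows essentially the same route as the paper: realize $\Q(\cos(\theta/n))$ as the maximal real subfield of the degree-$2n$ field $\Q(\alpha^{1/n})$, deduce $[\Q(\cos(\theta/n)):\Q]=n$ from the quadratic $X^2-2\cos(\theta/n)X+1$, prove linear independence by expanding in the $\Q(\alpha)$-basis $1,\alpha^{1/n},\dots,\alpha^{(n-1)/n}$, and pass the trace vanishing through $\Q(\alpha)$ via transitivity. Your write-up is if anything slightly more careful than the paper's at two spots it glosses over: you explicitly argue that $\lambda_k+\lambda_{n-k}\overline\alpha=0$ forces both coefficients to vanish because $\overline\alpha\notin\Q$, and you spell out the two transitivity chains (through $\Q(\alpha)$ and through $\Q(\cos(\theta/n))$) underlying the trace computation.
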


\begin{proof}
By Corollary \ref{C:TraceNul}, $[\Q( \alpha^{\frac{1}{n}} ):\Q] = 2n$. Observe that $\cos\frac{\theta}{n} = \frac{\alpha^{\frac{1}{n}} + \overline{\alpha^{\frac{1}{n}}}}{2}$. So $X^2 - 2 \cos\frac{\theta}{n} X + 1$ is the minimal polynomial of $\alpha^{\frac{1}{n}}$ over $\Q ( \cos\frac{\theta}{n} )$, and so $[\Q ( \cos\frac{\theta}{n} ): \Q] = n$, as illustrated by the following diagram.
\begin{equation*}
\xymatrix{
 & \Q(\alpha^{\frac{1}{n}}) &\\
\Q(\cos(\frac{\theta}{n})) \ar@{-}[ur]^-{2} & & \Q(\alpha) \ar@{-}[ul]_-{n}\\
& \Q \ar@{-}[ur]_-{2} \ar@{-}[ul]^-{n}
}
\end{equation*}
To show that $1,\cos\frac{\theta}{n},\cos\frac{2\theta}{n},\dots,\cos\frac{(n-1)\theta}{n}$ is a $\Q$-basis of $\Q(\cos\frac{\theta}{n})$, it is sufficient to prove that they are linearly independent over $\Q$. Assume that we are given $\lambda_0,\dots,\lambda_{n-1}$ in $\Q$ such that $\sum_{k=0}^{n-1}\lambda_k\cos\frac{k\theta}{n} = 0$. The following equalities hold
\[
0=\sum_{k=0}^{n-1}\lambda_k\cos\frac{k\theta}{n}
=\lambda_0+ \sum_{k=1}^{n-1}\lambda_k\frac{\alpha^{\frac{k}{n}} + \overline{\alpha^{\frac{k}{n}}}}{2}
=\lambda_0+ \sum_{k=1}^{n-1}\lambda_k\frac{\alpha^{\frac{k}{n}} + \overline{\alpha}\alpha^{\frac{n-k}{n}}}{2}
\]
Therefore the following equality holds
\[
\lambda_0+ \sum_{k=1}^{n-1}\frac{\lambda_k + \lambda_{n-k}\overline\alpha}{2} \alpha^{\frac{k}{n}} = 0\,.
\]
Thus, as $1,\alpha^{\frac{1}{n}},\alpha^{\frac{2}{n}},\dots,\alpha^{\frac{n-1}{n}}$ is a basis of $\Q(\alpha^{\frac{1}{n}})$ over $\Q(\alpha)$, it follows that $\lambda_k=0$ for all $k$.

By Corollary \ref{C:TraceNul}, we have $\Tr_{\Q( \alpha^{\frac{k}{n}} )/ \Q}( \alpha^{\frac{k}{n}} ) = 0$.
Since $\cos\frac{k\theta}{n} = \frac{\alpha^{\frac{k}{n}} + \overline{\alpha^{\frac{k}{n}}}}{2}$, we immediately get $\Tr_{\Q(\cos\frac{\theta}{n})/\Q}\left( \cos\frac{k\theta}{n} \right) = 0$.
\end{proof}
 
\begin{lemma}\label{L:enumerationconjugue}
With the hypothesis of Corollary~\textup{\ref{C:TraceNul}}, Let $\theta=\arccos(\frac{a}{b})\in]0,\frac{\pi}{2}[$. Let $\famm{\lambda_k}{0\le k\le n-1}$ be a family in $\mathbb{Q}$. Then the conjugates of the element $\lambda_0+\sum_{k=1}^{n-1}\lambda_k\cos(\frac{k\theta}{n})$, counted with multiplicity in $\Q(\cos(\frac{\theta}{n}))$, are the elements $\lambda_0+\sum_{k=1}^{n-1}\lambda_k\cos(\frac{k(\theta+2\pi t)}{n})$ with $0\le t\le n-1$.
\end{lemma}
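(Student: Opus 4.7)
The plan is to construct the $n$ field embeddings of $\Q(\cos\frac{\theta}{n})$ into $\C$ explicitly, and then to apply each of them to the element $\gamma:=\lambda_0+\sum_{k=1}^{n-1}\lambda_k\cos\frac{k\theta}{n}$. By Lemma~\ref{L:basecosinus}, the extension $[\Q(\cos\frac{\theta}{n}):\Q]$ equals $n$, so the conjugates of $\gamma$ counted with multiplicity in $\Q(\cos\frac{\theta}{n})$ are exactly the $n$ images $\sigma(\gamma)$ as $\sigma$ ranges over the $n$ distinct ring embeddings $\Q(\cos\frac{\theta}{n})\hookrightarrow\C$.

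The key tool is the $n$-th Chebyshev polynomial $T_n\in\Z[X]$ of the first kind, characterized by $T_n(\cos x)=\cos(nx)$ for every real $x$. Taking $x=\theta/n$ gives $T_n(\cos\frac{\theta}{n})=\cos\theta=\frac{a}{b}$, so $\cos\frac{\theta}{n}$ is a root of the degree-$n$ polynomial $bT_n(X)-a\in\Z[X]$. Since $[\Q(\cos\frac{\theta}{n}):\Q]=n$, this polynomial is a nonzero scalar multiple of the minimal polynomial of $\cos\frac{\theta}{n}$; being in characteristic zero, it is separable with exactly $n$ distinct roots in $\C$.

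For each $t\in\{0,1,\ldots,n-1\}$, the real number $\cos\frac{\theta+2\pi t}{n}$ is a root of $bT_n(X)-a$, since $T_n(\cos\frac{\theta+2\pi t}{n})=\cos(\theta+2\pi t)=\frac{a}{b}$. Hence the family $\famm{\cos\frac{\theta+2\pi t}{n}}{0\le t\le n-1}$ exhausts the roots of the minimal polynomial of $\cos\frac{\theta}{n}$, and each $t$ determines a unique field embedding $\sigma_t:\Q(\cos\frac{\theta}{n})\hookrightarrow\C$ sending $\cos\frac{\theta}{n}$ to $\cos\frac{\theta+2\pi t}{n}$.

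It remains to apply $\sigma_t$ to $\gamma$. Since $\cos\frac{k\theta}{n}=T_k(\cos\frac{\theta}{n})$ for every integer $k$, we obtain
\[
\sigma_t\bigl(\cos\tfrac{k\theta}{n}\bigr)=T_k\bigl(\cos\tfrac{\theta+2\pi t}{n}\bigr)=\cos\tfrac{k(\theta+2\pi t)}{n}\,,
\]
so that $\sigma_t(\gamma)=\lambda_0+\sum_{k=1}^{n-1}\lambda_k\cos\frac{k(\theta+2\pi t)}{n}$, as desired. The only delicate point is the identification of the minimal polynomial of $\cos\frac{\theta}{n}$ with a scalar multiple of $bT_n(X)-a$, which rests on the degree equality given by Lemma~\ref{L:basecosinus}; once this is secured, the rest is a direct Chebyshev substitution.
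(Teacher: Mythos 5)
Your approach is genuinely different from the paper's. The paper works with the degree-$2n$ field $L_n=\Q(\alpha^{1/n})$, explicitly constructs the $n$ morphisms $\varphi_t$ fixing $\Q(\alpha)$ and sending $\alpha^{1/n}\mapsto e^{2\pi it/n}\alpha^{1/n}$, invokes the CM structure (Amoroso--Nuccio) to describe all $2n$ embeddings of $L_n$ as the $\varphi_t$ and their complex conjugates, and then restricts to $K_n=\Q(\cos\frac{\theta}{n})$, observing $\varphi_t\res K_n=\overline\varphi_t\res K_n$. Your route stays entirely inside the real field $K_n$, identifies the minimal polynomial of $\cos\frac{\theta}{n}$ with $bT_n(X)-a$ via the degree count from Lemma~\ref{L:basecosinus}, and uses the Chebyshev identity $\cos\frac{k\theta}{n}=T_k(\cos\frac{\theta}{n})$ to transport the embeddings. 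This is more elementary and avoids the CM machinery entirely, which is a genuine simplification.

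There is, however, one small gap. You assert that the family $\famm{\cos\frac{\theta+2\pi t}{n}}{0\le t\le n-1}$ \emph{exhausts} the $n$ distinct roots of the minimal polynomial, but you never verify that these $n$ real numbers are pairwise distinct; without that, one only knows they lie among the roots, and the map $t\mapsto\sigma_t$ could fail to be injective, so one would not recover all $n$ embeddings of $\Q(\cos\frac{\theta}{n})$. The fix is short: $\cos\frac{\theta+2\pi t_1}{n}=\cos\frac{\theta+2\pi t_2}{n}$ forces either $t_1\equiv t_2\pmod n$ or $2\theta\in 2\pi\Z-2\pi(t_1+t_2)$, and the latter is impossible since $\theta\in\,]0,\frac{\pi}{2}[$ is not an integer multiple of $\pi$. (In the paper's argument, distinctness is automatic from the degree count $[L_n:K_n]=2$ and the fact that the $\varphi_t$ are already known to be $n$ distinct embeddings of $L_n$.) With this line added, your proof is complete and correct.
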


\begin{proof}
Note that $\alpha=e^{i\theta}=\frac{a + i\sqrt{b^2 - a^2}}{b}$, and $\overline\alpha=e^{-i\theta}=\alpha^{-1}$. We have $[\Q(\alpha^{\frac{1}{n}}):\Q(\alpha)]=n$, hence for each $0\le t\le n-1$ we have a unique morphism, denoted by $\varphi_t\colon \Q(\alpha^{\frac{1}{n}}) \to\C$ preserving $\Q(\alpha)$, such that $\varphi_t(\alpha^{\frac{1}{n}}) = e^{i\frac{2\pi t}{n}}\alpha^{\frac{1}{n}}$. The embedding $\Q(\alpha^{\frac{1}{n}}) \to\C$ preserving $\Q(\alpha)$, are $\varphi_0,\dots,\varphi_{n-1}$.
Since $\alpha$ is an algebraic number whose Galois conjugates have absolute value equal to $1$, then $\alpha^{\frac{1}{n}}$ has the same property. Then, by \cite[Proposition 2.3]{AmorosoNuccio}, $\Q( \alpha^{\frac{1}{n}} )$ is a $CM$ field. In particular the complex conjugation is well-defined and it is independent of any embedding of $\Q( \alpha^{\frac{1}{n}} )$ over $\C$. The embedding $\Q(\alpha^{\frac{1}{n}}) \to\C$ are $\varphi_0,\dots,\varphi_{n-1}$, and their complex conjugates $\overline\varphi_0,\dots,\overline\varphi_{n-1}$.
Let $k$ be in $\Z$, and $0\le t\le n-1$. The following equalities hold
\begin{equation}\label{E:enumconjugue}
\varphi_t\left(e^{\frac{ik\theta}{n}}\right) = \varphi_t\left(\alpha^{\frac{k}{n}}\right) = \varphi_t\left(\alpha^{\frac{1}{n}}\right)^k = (e^{i\frac{2\pi t}{n}}\alpha^{\frac{1}{n}})^k = e^{i\frac{2\pi kt}{n}}e^{\frac{ik\theta}{n}} = e^{i\frac{k(\theta+2\pi t)}{n}}.
\end{equation}
Therefore 
\begin{equation}\label{E:enumconjugue2}
\varphi_t(\cos(\frac{k\theta}{n})) = \frac{1}{2}\varphi_t\left(e^{\frac{ik\theta}{n}} + e^{\frac{-ik\theta}{n}}\right) = e^{i\frac{k(\theta+2\pi t)}{n}} + e^{-i\frac{k(\theta+2\pi t)}{n}} = \cos(\frac{k(\theta+2\pi t)}{n}).
\end{equation}

Set $K=\Q(\alpha^{\frac{1}{n}} + \overline{\alpha}^{\frac{1}{n}}) = \Q(\cos(\frac{\theta}{n}))$. Note that $\varphi_t\res K=\overline\varphi_t\res K$, for all $0\le t\le n-1$. Hence the morphisms $K\to\C$ are $\varphi_0\res K,\dots,\varphi_{n-1}\res K$. Set $\beta=\lambda_0+\sum_{k=1}^{n-1}\lambda_k\cos(\frac{k\theta}{n})$

Let $0\le t\le n-1$. The following equalities hold
\begin{align*}
\varphi_t\left(\beta\right) &= \varphi_t\left(\lambda_0+\sum_{k=1}^{n-1}\lambda_k\cos\left(\frac{k\theta}{n}\right)\right)\\
&= \lambda_0 + \sum_{k=1}^{n-1}\lambda_k\varphi_t\left(\cos\left(\frac{k\theta}{n}\right)\right)\\
&= \lambda_0 + \sum_{k=1}^{n-1}\lambda_k \cos\left(\frac{k(\theta+2\pi t)}{n}\right)\,. \tag*{\qed}
\end{align*}
\renewcommand{\qed}{}
\end{proof}

\begin{lemma}\label{L:caracterisionentieralgebrique}
Let $0<a<b$ be relatively prime integers, let $\theta=\arccos(\frac{a}{b})\in]0,\pi[$. Let $n\ge 2$. Let $\famm{\lambda_k}{0\le k\le n-1}$ be a family in $\mathbb{Q}$. Let $u,v$ in $\mathbb{N}^*$ be such that $\frac{b+a}{u^2}$ and $\frac{b-a}{v^2}$ are square-free integers.
Assume the following statements
\begin{itemize}
\item $4$ divides $b$.
\item $v_2 ( b )-1$ is relatively prime to $n$.
\item If $p$ is an odd prime number such that $p$ divides $n$ then $p$ divides $b$.
\item If $p$ is an odd prime number dividing $b$ then $v_p ( b )$ is relatively prime to $n$.
\end{itemize}
Then $\lambda_0+\sum_{k=1}^{n-1}\lambda_k\cos(\frac{k\theta}{n})$ is an algebraic integer if and only if for all $1\le k\le n-1$ the following statements hold 
\begin{enumerate}
\item $\lambda_0$ is an integer.
\item For all odd prime number $p$ such that $p$ divides $b$ we have 
$v_p(\lambda_k)\ge\Ceil{\frac{kv_p(b)}{n}}$.
\item $\nu_2(\lambda_k)\ge 1+\Ceil{\frac{k(v_2(b)-1)}{n}}$.
\item $u(\lambda_k+\lambda_{n-k})$ and $v(\lambda_k-\lambda_{n-k})$ are integers.
\end{enumerate}
\end{lemma}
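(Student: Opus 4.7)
The plan is to prove both implications by transferring the analysis to the ambient field $\Q(\alpha^{1/n})$, where
\[
\gamma := \lambda_0 + \sum_{k=1}^{n-1}\lambda_k\cos\frac{k\theta}{n} = \lambda_0 + \sum_{k=1}^{n-1}\frac{\lambda_k}{2}\bigl(\alpha^{k/n}+\alpha^{-k/n}\bigr),
\]
and to control $\gamma$ prime by prime. Corollary~\ref{C:TraceNul} guarantees $[\Q(\alpha^{1/n}):\Q(\alpha)]=n$, so Lemmas~\ref{L:groslambda} and~\ref{L:entieralgebrique} apply. The analysis will split according to whether a prime $p$ divides $b$ or not; the first case is handled with the valuations of Lemma~\ref{L:existencevaluations}, and the second by exhibiting $\sqrt{b\pm a}$-multiples of rational numbers as algebraic integers via condition (4).

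For the forward direction, assume $\gamma$ is an algebraic integer. At each prime $p\mid b$, take the two valuations $\nu_{1,p},\nu_{2,p}$ of Lemma~\ref{L:existencevaluations}, uniquely extended to $\Q(\alpha^{1/n})$; Lemma~\ref{L:groslambda} applied to the family $\mu_0=\lambda_0$, $\mu_{\pm k}=\lambda_k/2$ immediately yields (2) (using $\nu_{1,p}(\alpha)=v_p(b)$ for odd $p$) and (3) (using $\nu_{1,2}(\alpha)=v_2(b)-1$), along with $v_p(\lambda_0)\ge 0$. The hypothesis gives $p\nmid n$ whenever $p\nmid b$; combining with $\Tr_{L/\Q}(\gamma)=n\lambda_0\in\Z$ (Lemma~\ref{L:basecosinus}) then produces (1). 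To obtain (4), I apply Lemma~\ref{L:entieralgebrique}: the algebraic integers $\frac{n}{2}(b\lambda_k+\lambda_{n-k}(a\pm i\sqrt{b^2-a^2}))$, together with their analogues after the substitution $k\leftrightarrow n-k$, yield the rational integer relations
\[
n(a+b)(\lambda_k+\lambda_{n-k})\in\Z,\qquad n(b-a)(\lambda_k-\lambda_{n-k})\in\Z,
\]
together with the quadratic relations $n^2(b^2-a^2)(\lambda_k\pm\lambda_{n-k})^2\in\Z$. A prime-by-prime valuation argument — trivial at $p\mid b$ because (2)--(3) force $\lambda_k,\lambda_{n-k}$ to have positive $v_p$-valuation while $v_p(u)=v_p(v)=0$ (since $\gcd(a,b)=1$), and at $p\nmid b$ reducing to $v_p(\lambda_k+\lambda_{n-k})\ge\lceil -v_p(b+a)/2\rceil = -\lfloor v_p(b+a)/2\rfloor = -v_p(u)$ by square-freeness of $c$ and $d$ — then promotes these to $u(\lambda_k+\lambda_{n-k}), v(\lambda_k-\lambda_{n-k})\in\Z$.

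For the reverse direction, assume (1)--(4); I verify $\nu(\gamma)\ge 0$ for every valuation $\nu$ on a field containing $\gamma,\sqrt{b+a},\sqrt{b-a}$. At each $p\mid b$, direct expansion together with (1)--(3) shows that every term of $\gamma = \lambda_0+\sum\frac{\lambda_k}{2}(\alpha^{k/n}+\alpha^{-k/n})$ has $\nu_{i,p}$-valuation $\ge 0$ (the bound on $v_p(\lambda_k)$ exactly compensates the negative contribution from $\alpha^{-k/n}$). At each $p\nmid b$ (necessarily odd, since $4\mid b$) I pair indices $k$ and $n-k$ with $1\le k<n/2$ and use Lemma~\ref{L:sommecos} to write
\[
G_k := \lambda_k\cos\frac{k\theta}{n}+\lambda_{n-k}\cos\frac{(n-k)\theta}{n} = \frac{(\lambda_k+\lambda_{n-k})\sqrt{b+a}}{\sqrt{2b}}\cos\frac{(2k-n)\theta}{2n}+\frac{(\lambda_{n-k}-\lambda_k)\sqrt{b-a}}{\sqrt{2b}}\sin\frac{(2k-n)\theta}{2n}.
\]
Condition (4) and the square-free decompositions $b+a=u^2c$, $b-a=v^2d$ make both $(\lambda_k+\lambda_{n-k})\sqrt{b+a}$ and $(\lambda_{n-k}-\lambda_k)\sqrt{b-a}$ algebraic integers, while Lemma~\ref{L:cosqtheta} does the same for $2\sqrt{b}\cos\frac{(2k-n)\theta}{2n}$ and $2\sqrt{b}\sin\frac{(2k-n)\theta}{2n}$; hence $2\sqrt{2}b\cdot G_k$ is an algebraic integer, and since $\nu(2\sqrt{2}b)=0$ for such $p$, $\nu(G_k)\ge 0$. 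The middle term $\lambda_{n/2}\cos(\theta/2)$ (when $n$ is even) is handled in the same spirit using $2u\lambda_{n/2}\in\Z$ from (4) at $k=n/2$.

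The hard part will be the forward direction of (4): Lemma~\ref{L:entieralgebrique} alone only gives integrality up to the factor $nb$ and the imaginary $\sqrt{b^2-a^2}$, so the sharp denominators $u$ and $v$, which detect the square-free parts of $b\pm a$, have to be extracted by combining the quadratic relation coming from the imaginary parts with the linear relations coming from the real parts, and then running a careful case analysis at each prime that exploits the square-freeness of $c=(b+a)/u^2$ and $d=(b-a)/v^2$.
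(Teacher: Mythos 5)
Your proposal is correct and follows the paper's proof in all essentials: the reverse direction reduces to the two families of lemmas (valuations at $p\mid b$ via Lemma~\ref{L:radiEntierAlg}, and the $\sqrt{b\pm a}$-argument of Lemma~\ref{L:SommeCosEntierAlgebrique} at $p\nmid b$), and the forward direction combines Lemma~\ref{L:groslambda} with Lemma~\ref{L:existencevaluations} for (1)--(3) and extracts (4) from the algebraic integers produced by Lemma~\ref{L:entieralgebrique}. The only departures are organizational: for the reverse direction you run a prime-by-prime valuation check, where the paper instead observes that both $uv\cdot G_k$ and a power-of-$2b$ multiple of $G_k$ are algebraic integers with $\gcd(uv,2b)=1$; and in the forward step for (4) you package the imaginary-part information as the squared rational relations $n^2(b^2-a^2)(\lambda_k\pm\lambda_{n-k})^2\in\Z$, whereas the paper uses the unsquared statement that $n\lambda_k\, i\sqrt{b^2-a^2}$ is an algebraic integer and reads off $v_p(\lambda_k)\ge -\tfrac{1}{2}v_p(b\mp a)$ directly. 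Both packagings deliver the same bound and then the same comparison with $v_p(u)=\lfloor v_p(b+a)/2\rfloor$ and $v_p(v)=\lfloor v_p(b-a)/2\rfloor$, so this is a cosmetic difference, not a new route.
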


\begin{proof}
As $b$ is even and $a$ is relatively prime to $b$, it follows that $b-a$ and $b+a$ are odd, thus $u$ and $v$ are odd. Set $\alpha=e^{i\theta}=\frac{a+i\sqrt{b^2-a^2}}{b}$.

Assume that $(1)-(4)$ hold. Let $1\le k\le n-1$. It follows from $(4)$ that $2uv\lambda_k$ is an integer. However by $(3)$ we have $v_2(\lambda_k)\ge 1$, therefore $uv\frac{\lambda_k}{2}$ is an integer.

It follows from $(2)$ and $(3)$ that $\radi{\frac{b}{2}}{\frac{k}{n}}$ divides $uv\frac{\lambda_k}{2}$, and so by Lemma~\ref{L:radiEntierAlg} we have $uv\frac{\lambda_k}{2}\alpha^{\frac{k}{n}}=uv\frac{\lambda_k}{2}e^{\frac{ki\theta}{n}}$ is an algebraic integer. Therefore $uv\lambda_k\cos\left(\frac{k\theta}{n}\right)=uv\frac{\lambda_k}{2}\left(e^{\frac{ki\theta}{n}}+e^{\frac{-ki\theta}{n}}\right)$ is an algebraic integer. Similarly $uv\lambda_{n-k}\cos\left(\frac{(n-k)\theta}{n}\right)$ is an algebraic integer. Therefore $uv\left(\lambda_{k}\cos\left(\frac{k\theta}{n}\right)+\lambda_{n-k}\cos\left(\frac{(n-k)\theta}{n}\right)\right)$ is an algebraic integer.

Lemma~\ref{L:SommeCosEntierAlgebrique} implies that $2b\left(\lambda_{k}\cos\left(\frac{k\theta}{n}\right)+\lambda_{n-k}\cos\left(\frac{(n-k)\theta}{n}\right)\right)$ is an algebraic integer. However $2b$ and $uv$ are relatively prime, hence $\lambda_{k}\cos\left(\frac{k\theta}{n}\right)+\lambda_{n-k}\cos\left(\frac{(n-k)\theta}{n}\right)$ is an algebraic integer. As $\lambda_0$ is an integer, it follows that 
\[
\lambda_0+\sum_{k=1}^{n-1}\lambda_k\cos\left(\frac{k\theta}{n}\right)
\]
is an algebraic integer.

Reciprocally assume that $\lambda_0+\sum_{k=1}^{n-1}\lambda_k\cos\left(\frac{k\theta}{n}\right)$ is an algebraic integer. The following equality holds:
\[
\lambda_0+\sum_{k=1}^{n-1}\lambda_k\cos\left(\frac{k\theta}{n}\right) = \lambda_0+\sum_{k=1}^{n-1}\frac{\lambda_k}{2}\left(e^{\frac{ik\theta}{n}}+e^{\frac{-ik\theta}{n}}\right)=\lambda_0+\sum_{k=1}^{n-1}\frac{\lambda_k}{2}\left(\alpha^{\frac{k}{n}}+\alpha^{\frac{-k}{n}}\right).
\]
By Lemma~\ref{L:existencevaluations} there are valuations $\nu_{1, 2}$ and $\nu_{2, 2}$ defined over $\Q ( \alpha )$, extending the $2$-adic valuation, such that $\nu_{1, 2}( \alpha )=-\nu_{2, 2}( \alpha )=v_2( b )-1$. However, $v_2 ( b )-1$ is relatively prime to $n$. Moreover, for all $x \in \Q ( \alpha )^\ast$, we have that $\nu_{1, 2} ( x )$ is an integer. In fact since $4$ divides $b$ and $a$ and $b$ are coprime, by the reciprocity quadratic law, $( 2 )$ is not ramified over $\Q ( \alpha )$.  Hence it follows from Lemma~\ref{L:dim} that $[\Q(\alpha^{\frac{1}{n}}):\Q(\alpha)]=n$.

As $\nu_{1, 2}(\alpha)=v_2(b)-1$ is relatively prime to $n$, it follows from Lemma~\ref{L:groslambda} that $v_2 ( \lambda_0 )\ge 0$, and the following inequalities hold
\begin{equation*}
v_2\left(\frac{\lambda_k}{2}\right)\ge\Ceil{\frac{k(v_2(b)-1)}{n}}\,,\quad\text{for all $1\le k\le n$.}
\end{equation*}
Therefore $(3)$ holds.

Let $p$ be an odd prime dividing $b$. Thus by Lemma~\ref{L:existencevaluations}, there are valuations $\nu_{1, p}$ and $\nu_{2, p}$ extending $v_p$ to $\Q(\alpha)$ such that $\nu_{1, p} ( \alpha )=-\nu_{2, p} ( \alpha )=v_p ( b )$. As $v_p(b)$ is relatively prime to $n$, it follows from Lemma~\ref{L:groslambda} that $v_p(\lambda_0)\ge 0$, and the following inequalities hold
\begin{equation*}
v_p(\lambda_k)=v_p\left(\frac{\lambda_k}{2}\right)\ge\Ceil{\frac{kv_p(b)}{n}}\,,\quad\text{for all $1\le k\le n$.}
\end{equation*}
Therefore (2) holds.

It follows from Lemma~\ref{L:entieralgebrique} that $n\lambda_0$ is an integer, however we also have $v_p(\lambda_0)\ge 0$ for all odd primes $p$ dividing $n$, and $v_2(\lambda_0)\ge 0$, therefore $\lambda_0$ is an integer. Therefore $(1)$ holds.

By Lemma~\ref{L:entieralgebrique}, $nb\frac{\lambda_k}{2}+n\frac{\lambda_{n-k}}{2}(a+i\sqrt{b^2-a^2})$ is an algebraic integer (for all $1\le k\le n-1$). In particular, if $1\le k\le n-1$, then also $1\le n-k\le n-1$, and so $nb\frac{\lambda_{n-k}}{2}+n\frac{\lambda_k}{2}(a+i\sqrt{b^2-a^2})$ is an algebraic integer.

Therefore, applying $\Tr_{\Q(\alpha)/\Q}$, we obtain that $n(b\lambda_{k}+a\lambda_{n-k})$, $n(b\lambda_{n-k}+a\lambda_{k})$ are integers, and $n\lambda_{k}i\sqrt{b^2-a^2}$ is an algebraic integer.

We already proved that $v_2(\lambda_k)\ge 0$, and $v_2(\lambda_{n-k})\ge 0$, hence $v_2(u(\lambda_k+\lambda_{n-k}))\ge 0$, and $v_2(v(\lambda_k-\lambda_{n-k}))\ge 0$.

Similarly if $p$ be an odd prime number that divides $n$, then we have already proved that $v_p(\lambda_k)\ge 0$, and $v_p(\lambda_{n-k})\ge 0$, hence $v_p(u(\lambda_k+\lambda_{n-k}))\ge 0$, and $v_p(v(\lambda_k-\lambda_{n-k}))\ge 0$.

Let $p$ be a prime number that does not divides $n(b^2-a^2)$. As $n\lambda_{k}i\sqrt{b^2-a^2}$ is an algebraic integer, it follows that $n\lambda_{k}(b^2-a^2)$ is an integer, hence $v_p(\lambda_k)=v_p(n\lambda_{k}(b^2-a^2))\ge 0$. Similarly $v_p(\lambda_{n-k})\ge 0$, therefore $v_p(u(\lambda_k+\lambda_{n-k}))\ge 0$, and $v_p(v(\lambda_k-\lambda_{n-k}))\ge 0$.

Let $p$ be an odd prime number dividing $b^2-a^2=(b-a)(b+a)$. As $b$ and $a$ are relatively prime, it follows that either $p$ divides $b-a$ or $p$ divides $b+a$. Moreover $p$ does not divide $b$, and so $p$ does not divides $n$.

Assume that $p$ divides $b-a$. As $n(b\lambda_{k}+a\lambda_{n-k})$, $n(b\lambda_{n-k}+a\lambda_{k})$ are integers, it follows that $n(b\lambda_{k}+a\lambda_{n-k})+n(b\lambda_{n-k}+a\lambda_{k})=n(\lambda_k+\lambda_{n-k)})(a+b)$ is an integer. As $p$ does not divide $n$ and $p$ does not divide $a+b$, it follows that $v_p(u(\lambda_k+\lambda_{n-k})=v_p(n(\lambda_k+\lambda_{n-k)})(a+b))\ge 0$.

Moreover $v_p(\lambda_k)=v_p(n\lambda_{k}i\sqrt{b^2-a^2})-v_p(\sqrt{b^2-a^2})\ge 0-\frac{1}{2}v_p(b^2-a^2)=-\frac{1}{2}v_p(b-a)$. However, $\frac{b-a}{v^2}$ is a square-free integer, thus $v_p ( v )=\floor{\frac{1}{2}v_p ( b-a )}$. Therefore $v_p ( v )>\frac{1}{2}v_p ( b-a )-1$. It follows that $v_p(v\lambda_k)=v_p(v)+v_p(\lambda_k)> \frac{1}{2}v_p(b-a)-1 - \frac{1}{2}v_p(b-a)=-1$. However $v_p(v\lambda_k)$ is an integer, thus $v_p(v\lambda_k)\ge 0$. Similarly $v_p(v\lambda_{n-k})\ge 0$, and so $v_p(v(\lambda_k-\lambda_{n-k}))\ge 0$.

With a similar argument, we can prove that if $p$ divides $b+a$ then $v_p(u(\lambda_k+\lambda_{n-k}))\ge 0$ and $v_p(v(\lambda_k-\lambda_{n-k}))\ge 0$.

Hence, for all prime $p$ we have $v_p(u(\lambda_k+\lambda_{n-k}))\ge 0$ and $v_p(v(\lambda_k-\lambda_{n-k}))\ge 0$. Therefore $u(\lambda_k+\lambda_{n-k})$ and $v(\lambda_k-\lambda_{n-k})$ are integers. That is $(4)$ holds.
\end{proof}

\section{Trace in general}\label{sec3}

Given an algebraic number $\beta$ in a number field $K$, we denote $T(\beta)=\frac{\Tr_{K/\Q}(\beta)}{[K:\Q]}$. Note that $T$ does not depends on the choice of $K$. In particular $T(\beta)=\frac{\Tr_{\Q(\beta)/\Q}(\beta)}{[\Q(\beta):\Q]}$. Also note that $T(q)=q$ for all $q\in\Q$. We also denote by $\dc(\beta)$ the largest difference of two conjugates of $\beta$. Note that $\dc(\beta+q)=\dc(\beta)$ for all $q\in\Q$. The following Lemma is evident.

\begin{lemma}\label{L:trace0}
Let $\gamma$ be a totally real algebraic number. If $T ( \gamma )=0$, then $\gamma$ has a conjugate $\geq 0$ and a conjugate (maybe the same) $\leq 0$.
\end{lemma}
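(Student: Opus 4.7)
The plan is to unwind the definition of $T$ and use the most basic averaging argument. By definition, with $K = \Q(\gamma)$, we have
\[
T(\gamma) = \frac{\Tr_{K/\Q}(\gamma)}{[K:\Q]} = \frac{1}{[K:\Q]}\sum_{\sigma} \sigma(\gamma),
\]
where $\sigma$ ranges over the embeddings $K\to\C$. That is, $T(\gamma)$ is literally the arithmetic mean of the conjugates $\gamma_1,\dots,\gamma_n$ of $\gamma$ (listed with multiplicity, $n=[K:\Q]$). Since $\gamma$ is totally real, each $\gamma_i$ lies in $\R$.

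The hypothesis $T(\gamma)=0$ then translates to $\sum_i \gamma_i = 0$ in $\R$. From here one argues by contradiction: if every conjugate of $\gamma$ were strictly negative, then the sum $\sum_i \gamma_i$ would be strictly negative, contradicting the vanishing of the sum. Hence at least one conjugate of $\gamma$ must be $\ge 0$. The symmetric argument, assuming all conjugates were strictly positive, yields that at least one conjugate must be $\le 0$.

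There is no real obstacle here; the lemma is a one-line averaging observation, which is why the paper labels it ``evident''. The only subtlety worth flagging is that the two conjugates may coincide, namely when $\gamma = 0$ itself (in which case the single conjugate $0$ satisfies both inequalities at once); the statement already accounts for this via the parenthetical ``maybe the same''.
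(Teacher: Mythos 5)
Your proof is correct and is exactly the averaging argument the paper has in mind when it declares the lemma ``evident'' and gives no proof: $T(\gamma)$ is the arithmetic mean of the (real) conjugates, and a finite collection of reals with mean zero must contain one that is $\ge 0$ and one that is $\le 0$.
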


\begin{lemma}\label{L:trace1}
Let $\beta$ be a totally real algebraic number. Then $\beta$ has a conjugate $\geq T(\beta)$ and a conjugate (maybe the same) $\leq T(\beta)$.
\end{lemma}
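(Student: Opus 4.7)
The plan is to reduce directly to Lemma~\ref{L:trace0}. Set $\gamma = \beta - T(\beta)$. Since $T$ is $\Q$-linear on number fields (it is the normalized trace) and $T(q) = q$ for every rational $q$, we have $T(\gamma) = T(\beta) - T(\beta) = 0$. Moreover $\gamma$ is totally real because $\beta$ is and $T(\beta) \in \Q \subseteq \R$.

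By Lemma~\ref{L:trace0}, $\gamma$ has a conjugate $\ge 0$ and a conjugate $\le 0$. Unwinding the substitution, the conjugates of $\gamma$ are exactly the numbers $\beta' - T(\beta)$ as $\beta'$ ranges over the conjugates of $\beta$, so $\beta$ must have a conjugate $\ge T(\beta)$ and a conjugate $\le T(\beta)$, as required.

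There is no real obstacle here; the only thing to verify is that $T(\beta - T(\beta)) = 0$ (clear from linearity of $\Tr$ and $T(q)=q$) and that shifting by a rational preserves total reality. One could equivalently give a one-line direct argument: $T(\beta)$ is the arithmetic mean of the conjugates of $\beta$, hence the maximum conjugate is $\ge T(\beta)$ and the minimum conjugate is $\le T(\beta)$. I would prefer the Lemma~\ref{L:trace0} reduction since it keeps the exposition uniform with the preceding statement.
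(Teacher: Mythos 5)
Your proof is correct and follows exactly the paper's argument: set $\gamma=\beta-T(\beta)$, observe $T(\gamma)=0$, apply Lemma~\ref{L:trace0}, and shift back. The alternative one-line remark about the arithmetic mean is also fine, but your main reduction matches the paper verbatim.
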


\begin{proof}
Set $\gamma=\beta-T(\beta)$. 
As $T(\gamma)=0$ it follows from Lemma~\ref{L:trace0} that there is $\varphi\colon \Q(\beta)\to\R$ such that $\varphi(\gamma)\ge 0$, hence $\varphi(\beta) = \varphi(\gamma+T(\beta))=\varphi(\gamma)+T(\beta)\ge T(\beta)$. Similarly $\beta$ has a conjugate smaller than $T(\beta)$.
\end{proof}

\begin{lemma}\label{L:trace2}
Let $\beta$ be a totally real algebraic number. Then $\beta$ has a conjugate $\geq \sqrt{T(\beta^2)}$, or a conjugate $\leq -\sqrt{T(\beta^2)}$.
\end{lemma}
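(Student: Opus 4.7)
The plan is to reduce the statement to Lemma~\ref{L:trace1} applied to $\beta^2$. Since $\beta$ is totally real, every embedding $\sigma\colon\Q(\beta)\to\C$ sends $\beta$ to a real number, and hence sends $\beta^2$ to a non-negative real number. In particular $\beta^2$ is totally real (so Lemma~\ref{L:trace1} applies to it) and, because all its conjugates are non-negative, we have $T(\beta^2)\ge 0$, so the quantity $\sqrt{T(\beta^2)}$ makes sense as a non-negative real number.

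Next I would invoke Lemma~\ref{L:trace1} with $\beta^2$ in place of $\beta$: there exists an embedding $\sigma\colon \Q(\beta^2)\to\R$ such that $\sigma(\beta^2)\ge T(\beta^2)$. Extending $\sigma$ to an embedding of $\Q(\beta)$ into $\R$ (using that $\beta$ is totally real, so any such extension lands in $\R$), we get a conjugate $\beta'=\sigma(\beta)$ of $\beta$ satisfying $(\beta')^2\ge T(\beta^2)\ge 0$.

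Taking square roots yields $|\beta'|\ge\sqrt{T(\beta^2)}$, which gives the desired dichotomy: either $\beta'\ge \sqrt{T(\beta^2)}$ or $\beta'\le -\sqrt{T(\beta^2)}$. There is no real obstacle here; the only point requiring a line of care is the non-negativity of $T(\beta^2)$ (so that the square root is defined) and the fact that a real embedding of $\Q(\beta^2)$ lifts to a real embedding of $\Q(\beta)$, both of which follow immediately from $\beta$ being totally real.
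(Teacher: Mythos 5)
Your proof is correct and follows the same route as the paper: apply Lemma~\ref{L:trace1} to $\beta^2$ to get a conjugate $\ge T(\beta^2)$, lift to a conjugate $\gamma$ of $\beta$ with $\gamma^2\ge T(\beta^2)$, and take square roots. The paper omits the (sound) remarks you make about $T(\beta^2)\ge 0$ and lifting the embedding, but these are the same argument.
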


\begin{proof}
Lemma~\ref{L:trace1} implies that $\beta^2$ has a conjugate $\geq T(\beta^2)$. Hence we have a conjugate $\gamma$ of $\beta$ such that $\gamma^2\ge T(\beta^2)$, therefore $\gamma\ge \sqrt{T(\beta^2)}$ or $\gamma\le -\sqrt{T(\beta^2)}$.
\end{proof}

\begin{lemma}\label{L:trace3}
Let $\beta$ be a totally real algebraic number such that $T(\beta)=0$. Then $\dc(\beta)\ge 2\sqrt{T(\beta^2)}$.
\end{lemma}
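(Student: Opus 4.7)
The plan is to work directly with the real conjugates. Write $\beta_1,\dots,\beta_n$ for the conjugates of $\beta$ (with $n=[\Q(\beta):\Q]$), and set $M=\max_i\beta_i$ and $m=\min_i\beta_i$, so that by definition $\dc(\beta)=M-m$. Because $T(\beta)=0$, Lemma~\ref{L:trace0} already tells us that there is a conjugate $\ge 0$ and a conjugate $\le 0$, i.e.\ $m\le 0\le M$. This will matter at the very last step, when we apply AM--GM with the nonnegative quantities $-m$ and $M$.

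Next I would exploit the trivial inequality
\[
(\beta_i-m)(M-\beta_i)\ge 0\qquad\text{for every }i,
\]
which holds because $m\le\beta_i\le M$. Expanding gives $\beta_i^2\le (M+m)\beta_i-mM$, and summing over $i$ yields
\[
\sum_{i=1}^n\beta_i^2 \le (M+m)\sum_{i=1}^n\beta_i - nmM = -nmM,
\]
since $\sum_i\beta_i=nT(\beta)=0$. Dividing by $n$ we obtain $T(\beta^2)\le -mM$.

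For the final step, we use that $-m\ge 0$ and $M\ge 0$, so the AM--GM inequality gives
\[
\sqrt{T(\beta^2)}\le \sqrt{(-m)M}\le \frac{(-m)+M}{2}=\frac{M-m}{2}=\frac{\dc(\beta)}{2},
\]
which rearranges to $\dc(\beta)\ge 2\sqrt{T(\beta^2)}$, as required. There is no serious obstacle here; the only real idea is to insert the pointwise inequality $(\beta_i-m)(M-\beta_i)\ge 0$ to convert the information on the second moment $T(\beta^2)$ into a bound involving the product $mM$, after which the hypothesis $T(\beta)=0$ (used twice: through Lemma~\ref{L:trace0} to pin down the signs of $m$ and $M$, and through $\sum\beta_i=0$ to kill the linear term) closes the argument.
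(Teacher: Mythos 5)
Your proof is correct, and it is genuinely different from (and shorter than) the paper's argument. The paper proceeds by a translation trick: it picks a rational $u>\gamma_1$ (the largest conjugate), sets $v=\frac{u^2-T(\beta^2)}{2u}$, computes $T\bigl((\beta-v)^2\bigr)=\bigl(\frac{u^2+T(\beta^2)}{2u}\bigr)^2$, invokes Lemma~\ref{L:trace2} for $\beta-v$, rules out the ``large positive conjugate'' branch by contradiction with $u>\gamma_1$, and lets $u\downarrow\gamma_1$ to get $\gamma_2\le -T(\beta^2)/\gamma_1$, finishing with $\gamma_1+T(\beta^2)/\gamma_1\ge 2\sqrt{T(\beta^2)}$. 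Your route is more direct: you insert the pointwise inequality $(\beta_i-m)(M-\beta_i)\ge 0$, sum over conjugates using $\sum_i\beta_i=nT(\beta)=0$ to kill the linear term, obtain the range/variance bound $T(\beta^2)\le -mM$, and then apply AM--GM to $-m\ge 0$, $M\ge 0$. Both arguments land on the same AM--GM inequality in the end, but yours gets there without the auxiliary shift parameter or the appeal to Lemma~\ref{L:trace2}, at the cost of making the argument ``term-wise'' over conjugates rather than purely about normalized traces. (One cosmetic note: your bound $T(\beta^2)\le -mM$ is really the classical Popoviciu/Bhatia--Davis bound on a variance in terms of the range, here applied to the uniform distribution on the conjugates.) Your proof also handles the degenerate case $\beta=0$ uniformly, whereas the paper sets it aside explicitly.
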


\begin{proof}
We can assume that $\beta\not=0$. We see from Lemma~\ref{L:trace1} that $\beta^2$ has a conjugate $\geq T(\beta^2)$. Therefore $\beta$ has a conjugate $\geq \sqrt{T(\beta^2)}$ or a conjugate $\leq -\sqrt{T(\beta^2)}$. In the second case, we can change $\beta$ to $-\beta$, and so assume that $\beta$ has a conjugate $\geq \sqrt{T(\beta^2)}$.

Let $\gamma_1$ be the largest conjugate of $\beta$, and $\gamma_2$ be the smallest conjugate of $\beta$. From Lemma~\ref{L:trace1} it follows that $\gamma_1>0$, and $\gamma_2<0$. Let $u\in\Q$ be such that $u> \gamma_1$. Set $v=\frac{u^2-T(\beta^2)}{2u}$. As $u>\gamma_1\ge \sqrt{T(\beta^2)}$, it follows that $v>0$.

The following equalities hold:
\begin{align*}
T( (\beta-v)^2) &= T(\beta^2)-2vT(\beta)+v^2\\
&=T(\beta^2)+v^2 &&\text{as $T(\beta)=0$.}\\
&=T(\beta^2) + \left(\frac{u^2-T(\beta^2)}{2u}\right)^2\\
&=T(\beta^2) + \frac{u^4-2u^2T(\beta^2)+T(\beta^2)^2}{4u^2}\\
&=\frac{u^4+2u^2T(\beta^2)+T(\beta^2)^2}{4u^2}\\
&=\left(\frac{ u^2+T(\beta^2)}{2u}\right)^2.
\end{align*}
It follows from Lemma~\ref{L:trace2} that $\beta-v$ has a conjugate $\geq \frac{u^2+T(\beta^2)}{2u}$ or a conjugate $\leq -\frac{u^2+T(\beta^2)}{2u}$.

If $\beta-v$ has a conjugate $\geq \frac{u^2+T(\beta^2)}{2u}$, it follows that $\gamma_1$ the largest conjugate of $\beta$ satisfies
\[
\gamma_1 \geq v+\frac{u^2+T(\beta^2)}{2u} = \frac{u^2-T(\beta^2)}{2u}+\frac{u^2+T(\beta^2)}{2u}=u\,,
\]
which contradicts $u>\gamma_1$. Therefore $\beta-v$ has a conjugate $\leq -\frac{u^2+T(\beta^2)}{2u}$. It follows that $\gamma_2$ the smallest conjugate of $\beta$ satisfies
\[
\gamma_2 \leq v-\frac{u^2+T(\beta^2)}{2u} = \frac{u^2-T(\beta^2)}{2u}-\frac{u^2+T(\beta^2)}{2u}=-\frac{T(\beta^2)}{u}\,.
\]
This inequality is true for all $u\in\Q$ larger than $\gamma_1$. It follows that $\gamma_2\le-\frac{T(\beta^2)}{\gamma_1}$. Thus $\gamma_1-\gamma_2\ge \gamma_1 + \frac{T(\beta^2)}{\gamma_1}\ge 2\sqrt{T(\beta^2)}$.
\end{proof}

\begin{lemma}\label{L:trace4}
Let $\beta$ be a totally real algebraic number, set $r=T\left((\beta^2-T(\beta^2))^2\right)$. If $T(\beta)=0$ and $T(\beta^3)=0$ then $\dc(\beta)\ge 2\sqrt{2\sqrt{r}}$.
\end{lemma}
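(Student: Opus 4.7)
The plan is to apply Lemma~\ref{L:trace3} to the centered square $\eta_v := (\beta-v)^2 - T((\beta-v)^2) = \beta^2 - 2v\beta - s$, for a rational parameter $v$, where $s := T(\beta^2)$. The key point is that $T(\eta_v) = T(\beta^2) - 2vT(\beta) - s = 0$ follows from $T(\beta) = 0$ alone, while a direct expansion
\[
\eta_v^2 = \beta^4 - 4v\beta^3 + (4v^2 - 2s)\beta^2 + 4vs\beta + s^2,
\]
combined with $T(\beta) = T(\beta^3) = 0$ and $T(\beta^4) = r + s^2$ (immediate from the definition of $r$), yields the clean identity $T(\eta_v^2) = r + 4v^2 s$. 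Applying Lemma~\ref{L:trace3} to $\eta_v$ then gives $\dc((\beta - v)^2) = \dc(\eta_v) \geq 2\sqrt{r + 4v^2 s}$ for every $v \in \mathbb{Q}$.

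To close the argument I would bound $\dc((\beta-v)^2)$ geometrically in terms of $D := \dc(\beta)$. Let $\gamma_1, \gamma_2$ be the largest and smallest conjugates of $\beta$ and put $m := (\gamma_1 + \gamma_2)/2$. Since $(\beta_i - v)^2 \geq 0$ and, by convexity of $x \mapsto (x - v)^2$, $\max_i (\beta_i - v)^2 = \max((\gamma_1 - v)^2, (\gamma_2 - v)^2)$, a direct computation shows $\max(|\gamma_1 - v|, |\gamma_2 - v|) = D/2 + |v - m|$ for every real $v$, whence
\[
\dc((\beta-v)^2) \leq \left(\frac{D}{2} + |v - m|\right)^2.
\]
Combining this with the lower bound from Lemma~\ref{L:trace3} yields
\[
\left(\frac{D}{2} + |v - m|\right)^2 \geq 2\sqrt{r + 4v^2 s} \geq 2\sqrt{r}
\]
for every $v \in \mathbb{Q}$. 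Choosing rational $v$ arbitrarily close to $m$ (using density of $\mathbb{Q}$ in $\mathbb{R}$) and passing to the limit gives $(D/2)^2 \geq 2\sqrt{r}$, i.e. $D \geq 2\sqrt{2\sqrt{r}}$.

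The crucial and least mechanical step, which I expect to be the main obstacle, is selecting the auxiliary quantity $\eta_v$. Both trace hypotheses enter in a specific way: $T(\beta) = 0$ enforces $T(\eta_v) = 0$, a prerequisite for invoking Lemma~\ref{L:trace3}, while $T(\beta^3) = 0$ precisely kills the $-4vT(\beta^3)$ term that would otherwise obstruct a clean formula for $T(\eta_v^2)$, leaving the manageable expression $r + 4v^2 s$ whose infimum over $v$ is exactly $r$. The remaining technicality, that the optimal midpoint $m$ need not be rational, is dispatched by a standard density and continuity argument.
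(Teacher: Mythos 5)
Your proof is correct. The central construction — the auxiliary quantity $\eta_v=(\beta-v)^2-T\bigl((\beta-v)^2\bigr)=\beta^2-T(\beta^2)-2v\beta$ for rational $v$, the verification $T(\eta_v)=0$, and the computation $T(\eta_v^2)=r+4v^2T(\beta^2)$ using precisely $T(\beta)=T(\beta^3)=0$ — coincides with the paper's, as does the application of Lemma~\ref{L:trace3} to $\eta_v$.

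Where you diverge is the finishing step. The paper does not bound $\dc\bigl((\beta-v)^2\bigr)$ from above by a function of $\dc(\beta)$; instead, after first reducing (via $\beta\mapsto-\beta$) to the case $\gamma_1\ge|\gamma_2|$, it introduces an auxiliary real $c$ defined by a fixed-point/continuity argument, sets $s'=r+4c^2T(\beta^2)\ge r$, and derives $\gamma_2\le d-\sqrt{2\sqrt{s'}}$ for all rational $d>c$ by ruling out, via maximality of $\gamma_1$, the alternative that some conjugate $\gamma_3$ of $\beta$ lies to the right of $d+\sqrt{2\sqrt{s'}}$. Letting $d\to c$ gives $\gamma_1-\gamma_2\ge 2\sqrt{2\sqrt{s'}}\ge 2\sqrt{2\sqrt{r}}$. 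Your argument is shorter and avoids the WLOG reduction and the continuity/fixed-point step: you observe that all conjugates of $(\beta-v)^2$ are nonnegative, so $\dc\bigl((\beta-v)^2\bigr)\le\max_i(\beta_i-v)^2$, use convexity to evaluate the maximum as $\bigl(\tfrac{D}{2}+|v-m|\bigr)^2$ where $m$ is the midpoint of the extreme conjugates, discard the $4v^2T(\beta^2)$ term (legitimate since $T(\beta^2)\ge 0$), and pass $v\to m$ over $\Q$. This buys you a more elementary and transparent closing argument; what it gives up is the sharper intermediate inequality $\dc(\beta)\ge 2\sqrt{2\sqrt{r+4c^2T(\beta^2)}}$ that the paper implicitly proves but does not state, which is irrelevant here since the lemma only asserts the bound with $r$.
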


\begin{proof}
Denote by $\gamma_1$ the largest conjugate of $\beta$ and by $\gamma_2$ the smallest conjugate of $\beta$. If $\module{\gamma_2}\ge\module{\gamma_1}$, then we can change $\beta$ to $-\beta$, and so assume that $\gamma_1=\module{\gamma_1}\ge\module{\gamma_2}$, in particular $\gamma_1^2$ is the largest conjugate of $\beta^2$. Note that $T(\beta^2-T(\beta^2))=0$, hence it follows from Lemma~\ref{L:trace3} that $\dc(\beta^2-T(\beta^2))\ge 2\sqrt{T\left((\beta^2-T(\beta^2))^2\right)}=2\sqrt{r}$. However all conjugates of $\beta^2$ are positives, hence $\gamma_1^2\ge\dc(\beta^2)=\dc(\beta^2-T(\beta^2))\ge 2\sqrt{r}$, it follows that $\gamma_1\ge \sqrt{2\sqrt{r}}$.

Consider the following function
\begin{align*}
f\colon\R&\to\R\\
x &\mapsto x-\gamma_1+\sqrt{2\sqrt{r+4x^2 T(\beta^2)}}.
\end{align*}
Note that $f(0)= -\gamma_1 + \sqrt{2\sqrt{r}}\le 0$ and that $\lim_{x\to\infty} f(x)=+\infty$. Moreover $f$ is continuous, therefore we can pick $c\ge 0$, such that $f(c)=0$. Set $s=r+4c^2 T(\beta^2)$. Hence
\begin{equation}\label{E:t4:defc}
c = \gamma_1-\sqrt{2\sqrt{r+4c^2 T(\beta^2)}}=\gamma_1 - \sqrt{2\sqrt{s}}.
\end{equation}
Let $d\in \Q$ be such that $d>c$. Set $\rho=(\beta-d)^2-T((\beta-d)^2)$. Note that $T(\rho)=0$, moreover the following equalities hold
\begin{align*}
\rho&=\beta^2-2d\beta+d^2-T(\beta^2-2d\beta+d^2)\\
&=\beta^2-2d\beta+d^2-(T(\beta^2)+2dT(\beta)+d^2)\\
&=\beta^2-T(\beta^2) -2d\beta.
\end{align*}
Thus
\begin{align*}
T(\rho^2)&=T\left( (\beta^2-T(\beta^2) -2d\beta)^2\right)\\
&= T\left( (\beta^2-T(\beta^2))^2\right) +4d^2T(\beta^2) - 4dT(\beta^3)+4dT(\beta^2)T(\beta)\\
&= r + 4d^2T(\beta^2)\\
&\ge s.
\end{align*}
Therefore, by Lemma~\ref{L:trace3}, we have $\dc(\rho)\ge 2\sqrt{s}$, thus $\dc((\beta-d)^2)=\dc(\rho)\ge 2\sqrt{s}$. However all conjugates of $(\beta-d)^2$ are positives, hence there is a conjugate $\gamma_3$ of $\beta$ such that $(\gamma_3-d)^2\ge 2\sqrt{s}$. Assume that $\gamma_3-d\ge\sqrt{2\sqrt{s}}$, then, as $d>c$ it follows from \eqref{E:t4:defc} that $\gamma_3\ge d+\sqrt{2\sqrt{s}}>c+\sqrt{2\sqrt{s}}=\gamma_1$; a contradiction, as $\gamma_1$ is the largest conjugate of $\beta$. Therefore $\gamma_3-d\le -\sqrt{2\sqrt{s}}$, hence $\gamma_2\le\gamma_3\le d-\sqrt{2\sqrt{s}}$. Therefore, for all $d\in\Q$ such that $d>c$, we have $\gamma_2\le d-\sqrt{2\sqrt{s}}$. Hence $\gamma_2\le c-\sqrt{2\sqrt{s}}=\gamma_1-2\sqrt{2\sqrt{s}}$, thus $\gamma_1-\gamma_2\ge \gamma_1 - (\gamma_1-2\sqrt{2\sqrt{s}})= 2\sqrt{2\sqrt{s}}\ge 2\sqrt{2\sqrt{r}}$.
\end{proof}

\section{Trace in the fields for the examples}\label{sec4}

In this section we fix $0<a<b$ relatively prime integers. We assume that if $2$ divides $b$, then $4$ divides $b$. Denote $\theta=\arccos(\frac{a}{b})$. We consider $R$ a set of positive integers such that for all $n,m\in R$ the following statements hold
\begin{itemize}
\item $\gcd(m,n)\in R$.
\item For all $s$ that divides $n$ we have $s\in R$.
\item Let $p$ be an odd prime that divides $b$, then $v_p(b)$ is relatively prime to $n$.
\item If $b$ is even then $v_2(b)-1$ is relatively prime to $n$.
\end{itemize}
Denote $Q=\Setm{\frac{k}{n}}{n\in R\text{ and } 1\le k\le n}$. Note that it follows from Corollary~\ref{C:TraceNul} that
\begin{equation*}
T\left(\cos\left(\frac{k\theta}{n}\right)\right)=0\,,\text{for all $n\in R$, and all $1\le k\le n$.}
\end{equation*}

For all $x,y\in\Q$ we set $N(x,y) = x^2+y^2+2xy\frac{a}{b}$.

\begin{lemma}\label{L:TSommeCos}
Let $n$ be in $R$. Let $\famm{\lambda_k}{0\le k\le n-1}$ be a family in $\mathbb{Q}$. Then
\begin{align*}
T\left( \left(\lambda_0+\sum_{k=1}^{n-1}\lambda_k\cos\left(\frac{k\theta}{n}\right)\right)^2\right)&=\lambda_0^2+\frac{1}{2}\sum_{k=1}^{n-1} \lambda_k^2 + \lambda_k\lambda_{n-k}\frac{a}{b}\\
&=\lambda_0^2+\frac{1}{2}\sum_{k=1}^{\floor{\frac{n}{2}}} N(\lambda_k,\lambda_{n-k}) \,.
\end{align*}
\end{lemma}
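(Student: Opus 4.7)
The plan is to expand the square, apply the $\Q$-linearity of $T$, and then evaluate the trace of each term by combining the product-to-sum identity with the explicit description of conjugates from Lemma~\ref{L:enumerationconjugue}.

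First I would expand
\[
\left(\lambda_0+\sum_{k=1}^{n-1}\lambda_k\cos\tfrac{k\theta}{n}\right)^{\!2}
=\lambda_0^2 + 2\lambda_0\sum_{k=1}^{n-1}\lambda_k\cos\tfrac{k\theta}{n}
+\sum_{k,\ell=1}^{n-1}\lambda_k\lambda_\ell\,\cos\tfrac{k\theta}{n}\cos\tfrac{\ell\theta}{n}.
\]
Applying $T$ and using $\Q$-linearity, the first term gives $\lambda_0^2$. The linear term vanishes, because by the displayed equation at the start of Section~\ref{sec4} (an instance of Lemma~\ref{L:basecosinus} and Corollary~\ref{C:TraceNul}), $T(\cos(k\theta/n))=0$ for every $1\le k\le n-1$. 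So everything reduces to evaluating $\sum_{k,\ell}\lambda_k\lambda_\ell\,T(\cos(k\theta/n)\cos(\ell\theta/n))$.

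To evaluate the bilinear traces I would use $2\cos x\cos y=\cos(x+y)+\cos(x-y)$ to reduce each product to a sum of two cosines of the form $\cos(m\theta/n)$ with $m=k\pm\ell$, and then compute $T(\cos(m\theta/n))$ directly from the conjugate description of Lemma~\ref{L:enumerationconjugue}. Writing $\cos(m\theta/n)=\tfrac12(e^{im\theta/n}+e^{-im\theta/n})$ and averaging over $t=0,\dots,n-1$ gives
\[
T\!\left(\cos\tfrac{m\theta}{n}\right)
=\frac{1}{2n}\sum_{t=0}^{n-1}\!\bigl(e^{im(\theta+2\pi t)/n}+e^{-im(\theta+2\pi t)/n}\bigr),
\]
and the inner geometric sums in $e^{\pm 2\pi imt/n}$ vanish unless $n\mid m$, in which case the expression collapses to $\cos((m/n)\theta)$. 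In our range $1\le k,\ell\le n-1$ one has $|k\pm\ell|\le 2n-2$, so $n\mid(k-\ell)$ forces $k=\ell$ (giving trace $1$) and $n\mid(k+\ell)$ forces $k+\ell=n$ (giving $\cos\theta=a/b$); all other cases give $0$.

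Combining these contributions, the diagonal $k=\ell$ produces $\tfrac12\sum_{k=1}^{n-1}\lambda_k^2$ and the antidiagonal $k+\ell=n$ produces $\tfrac{a}{2b}\sum_{k=1}^{n-1}\lambda_k\lambda_{n-k}$, which together with $\lambda_0^2$ yields the first displayed equality. The second equality then follows by pairing index $k$ with $n-k$ and recognising $\lambda_k^2+\lambda_{n-k}^2+2\lambda_k\lambda_{n-k}\tfrac{a}{b}=N(\lambda_k,\lambda_{n-k})$. There is no conceptual obstacle here; the main difficulty is purely bookkeeping, namely keeping track of the diagonal versus antidiagonal contributions and, when $n$ is even, the self-paired middle index $k=n/2$ where the two cases $k=\ell$ and $k+\ell=n$ coincide and their contributions must be added.
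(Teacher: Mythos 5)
Your proof takes essentially the same route as the paper's: expand the square, kill the linear terms using $T(\cos(k\theta/n))=0$, reduce the bilinear terms via a product-to-sum identity to the case analysis of $T(\cos(m\theta/n))$, and collect the diagonal ($k=\ell$) and antidiagonal ($k+\ell=n$) contributions. The only cosmetic difference is that the paper computes $\Tr_{\Q(\alpha^{1/n})/\Q(\alpha)}(\cos\frac{k\theta}{n}\cos\frac{\ell\theta}{n})$ by expanding both cosines into exponentials and invoking the vanishing of $\Tr(\alpha^{m/n})$ for $n\nmid m$ (Corollary~\ref{C:TraceNul}), whereas you compute $T(\cos\frac{m\theta}{n})$ by averaging over the $n$ conjugates enumerated in Lemma~\ref{L:enumerationconjugue} and summing the resulting geometric series; these are equivalent.

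One caveat on the second displayed equality, which you dispatch with "pairing $k$ with $n-k$": for $n$ even the index $k=n/2$ is self-paired, and the first form contributes $\frac{1}{2}\lambda_{n/2}^2\left(1+\frac{a}{b}\right)=\frac{1}{4}N(\lambda_{n/2},\lambda_{n/2})$ at that index, not $\frac{1}{2}N(\lambda_{n/2},\lambda_{n/2})$. You correctly flag the self-paired middle index as a bookkeeping point, but the clean pairing argument only yields the second line for $k<n/2$, and as written (here and in the paper's own statement) the $k=n/2$ coefficient is off by a factor of $2$. This is a wrinkle in the statement rather than a gap in your method, but if you fill in the bookkeeping you should record the correct coefficient at the middle index.
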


\begin{proof}
Set $\alpha=e^{i\theta}$. Set $K=\mathbb{Q}(e^{\frac{i\theta}{n}})$, note that for every $1\le k,\ell\le n-1$, the following equalities hold:
\begin{align*}
\cos\left(\frac{k\theta}{n}\right)\cos\left(\frac{\ell\theta}{n}\right)
&=\frac{1}{4}\left(e^{\frac{ki\theta}{n}}+e^{\frac{-ki\theta}{n}}\right)\left(e^{\frac{\ell i\theta}{n}}+e^{\frac{-\ell i\theta}{n}}\right)\\
&=\frac{1}{4}\left( e^{\frac{(k+\ell)i\theta}{n}}+e^{\frac{(k-\ell)i\theta}{n}} +e^{\frac{(-k+\ell)i\theta}{n}}+e^{\frac{(-k-\ell)i\theta}{n}} \right).
\end{align*}
Therefore,
\[
\Tr_{\Q(\alpha^{\frac{1}{n}})/\Q(\alpha)}\left(\cos\left(\frac{k\theta}{n}\right)\cos\left(\frac{\ell\theta}{n}\right)\right)=
\begin{cases}
\frac{n}{2}+\frac{na}{2b} &\text{if $k=\ell=\frac{n}{2}$}\\
\frac{n}{2} &\text{if $k=\ell\not=\frac{n}{2}$}\\
\frac{na}{2b} &\text{if $k=n-\ell\not=\frac{n}{2}$}\\
0 &\text{otherwise}.
\end{cases}
\]
Hence:
\[
\Tr_{\Q(\cos(\frac{\theta}{n}))/\Q}\left(\cos\left(\frac{k\theta}{n}\right)\cos\left(\frac{\ell\theta}{n}\right)\right)=
\begin{cases}
\frac{n}{2}+\frac{na}{2b} &\text{if $k=\ell=\frac{n}{2}$}\\
\frac{n}{2} &\text{if $k=\ell\not=\frac{n}{2}$}\\
\frac{na}{2b} &\text{if $k=n-\ell\not=\frac{n}{2}$}\\
0 &\text{otherwise}.
\end{cases}
\]
Moreover $\Tr_{\Q(\cos(\frac{\theta}{n}))/\Q}(\cos(\frac{k\theta}{n}))=0$ for all $1\le k\le n-1$. The Lemma follows directly.
\end{proof}

\begin{remark}
Given $\lambda,\mu\in\Q$, $n\in R$, and $1\le k<\frac{n}{2}$. It follows from Lemma~\ref{L:TSommeCos} that
\[
N(\lambda,\mu)=2T\left(\left( \lambda\cos\left(\frac{k\theta}{n}\right) + \mu\cos\left(\frac{(n-k)\theta}{n}\right) \right)^2\right)\,.
\]
Moreover if $2\in R$, then 
\[
N(\lambda,\lambda)=2T\left( \left( \lambda\cos\left(\frac{\theta}{2}\right) \right)^2\right)\,.
\]
\end{remark}

\section{Some analysis}\label{sec5}

In this section we give various estimation of maximal value of functions that will be necessary to the calculation of the Julia Robinson Number in the next section.

\begin{lemma}\label{L:maxsommecossin}
Let $\lambda,\mu$ be real numbers. Then the maximum value, with $x\in\R$, of $\lambda\cos x + \mu\sin x$ is $\sqrt{\lambda^2+\mu^2}$. The minimum value is $-\sqrt{\lambda^2+\mu^2}$.
\end{lemma}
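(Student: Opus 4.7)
The plan is the standard trigonometric collapse of a linear combination of $\cos x$ and $\sin x$ into a single shifted cosine. First, dispose of the degenerate case $\lambda=\mu=0$, where the function is identically $0$ and the claim is trivial.

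Otherwise, set $r=\sqrt{\lambda^2+\mu^2}>0$. The pair $(\lambda/r,-\mu/r)$ lies on the unit circle, so there exists $\varphi\in\mathbb{R}$ with $\cos\varphi=\lambda/r$ and $\sin\varphi=-\mu/r$. Using the cosine addition formula $\cos(x+\varphi)=\cos x\cos\varphi-\sin x\sin\varphi$, I would compute
\[
\lambda\cos x+\mu\sin x=r\bigl(\cos\varphi\cos x-\sin\varphi\sin x\bigr)=r\cos(x+\varphi).
\]
Since $\cos$ takes all values in $[-1,1]$ and attains both endpoints, the function $r\cos(x+\varphi)$ takes all values in $[-r,r]$, with maximum $r=\sqrt{\lambda^2+\mu^2}$ attained at $x=-\varphi$ and minimum $-r$ attained at $x=\pi-\varphi$.

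There is no real obstacle here; this is a one-line identity once the auxiliary angle $\varphi$ is introduced. The only care needed is to rule out $\lambda=\mu=0$ before dividing by $r$, which the first sentence of the proof handles.
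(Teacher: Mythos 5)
Your proof is correct and is essentially the same idea as the paper's, just phrased algebraically (auxiliary angle $\varphi$ and the cosine addition formula) rather than geometrically (the paper observes that $\lambda\cos x+\mu\sin x$ is the first coordinate of a rotation of $(\lambda,\mu)$, whose extreme value is the vector's length). Your version is slightly more explicit — it exhibits the maximizing and minimizing $x$ and handles the degenerate case $\lambda=\mu=0$ — but the underlying mechanism is identical.
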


\begin{proof}
A rotation of the angle $x$, of the vector $(\lambda,\mu)$, has first coordinate $\lambda\cos x + \mu\sin x$. Hence the maximum possible value is the length of the vector $(\lambda,\mu)$, which is $\sqrt{\lambda^2+\mu^2}$.
\end{proof}

\begin{lemma}\label{L:maxsommecos}
Let $\lambda,\mu,\theta,q$ be real numbers. Then the maximum value, for $x\in\R$, of $\lambda\cos(q\theta+x)+\mu\cos( (1-q)\theta-x)$ is $\sqrt{\lambda^2+\mu^2+2\lambda\mu\cos(\theta)}$.
\end{lemma}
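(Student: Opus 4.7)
The plan is to reduce this to the previous lemma (Lemma~\ref{L:maxsommecossin}) by rewriting the expression in the standard form $A\cos x + B\sin x$ using the angle-addition formulas, and then to simplify $A^2+B^2$ via the cosine-of-a-sum identity.

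First, I would expand each term separately:
\begin{align*}
\lambda\cos(q\theta+x) &= \lambda\cos(q\theta)\cos(x) - \lambda\sin(q\theta)\sin(x),\\
\mu\cos((1-q)\theta-x) &= \mu\cos((1-q)\theta)\cos(x) + \mu\sin((1-q)\theta)\sin(x).
\end{align*}
Adding these gives $A\cos(x)+B\sin(x)$ where $A=\lambda\cos(q\theta)+\mu\cos((1-q)\theta)$ and $B=-\lambda\sin(q\theta)+\mu\sin((1-q)\theta)$. By Lemma~\ref{L:maxsommecossin}, the maximum over $x\in\R$ of this expression is $\sqrt{A^{2}+B^{2}}$.

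The second step is to compute $A^{2}+B^{2}$. The squared terms involving $\lambda^{2}$ collapse to $\lambda^{2}(\cos^{2}(q\theta)+\sin^{2}(q\theta))=\lambda^{2}$, and similarly the $\mu^{2}$ terms collapse to $\mu^{2}$. The cross terms yield
\[
2\lambda\mu\bigl[\cos(q\theta)\cos((1-q)\theta)-\sin(q\theta)\sin((1-q)\theta)\bigr]=2\lambda\mu\cos(q\theta+(1-q)\theta)=2\lambda\mu\cos\theta,
\]
using the identity $\cos(\alpha+\beta)=\cos\alpha\cos\beta-\sin\alpha\sin\beta$ with $\alpha=q\theta$ and $\beta=(1-q)\theta$. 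Therefore $A^{2}+B^{2}=\lambda^{2}+\mu^{2}+2\lambda\mu\cos\theta$, which gives the claimed maximum $\sqrt{\lambda^{2}+\mu^{2}+2\lambda\mu\cos\theta}$.

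There is no real obstacle here; the only care needed is with the sign coming from $-x$ in the second term, which is precisely what makes the sine cross terms recombine into $\cos(\alpha+\beta)$ rather than $\cos(\alpha-\beta)$. The whole argument is a two-line manipulation followed by an appeal to the previous lemma.
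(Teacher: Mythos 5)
Your proof is correct and follows the same core strategy as the paper: rewrite the expression in the form $A\cos x + B\sin x$ and invoke Lemma~\ref{L:maxsommecossin}. The paper obtains its $A$ and $B$ by first passing through the sum-of-cosines identity of Lemma~\ref{L:sommecos}, giving $A=(\lambda+\mu)\cos(\theta/2)$ and $B=(\mu-\lambda)\sin(\theta/2)$ with a shifted phase, whereas you expand directly by the angle-addition formulas and simplify the cross terms via $\cos(\alpha+\beta)$; both routes yield $A^2+B^2=\lambda^2+\mu^2+2\lambda\mu\cos\theta$, and yours is marginally more self-contained since it does not rely on Lemma~\ref{L:sommecos}.
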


\begin{proof}
For all $x\in\R$, denote $f(x)=\lambda\cos(q\theta+x)+\mu\cos( (1-q)\theta-x)$. It follows, from Lemma~\ref{L:sommecos}, that
\begin{equation*}
f(x) = (\lambda+\mu)\cos\left(\frac{\theta}{2}\right)\cos\left(\frac{1-2q}{2}\theta + x\right) + (\mu-\lambda)\sin\left(\frac{\theta}{2}\right)\sin\left(\frac{1-2q}{2}\theta + x\right)
\end{equation*}
Therefore, by Lemma~\ref{L:maxsommecossin}, the maximum value of $f$ is
\[
V=\sqrt{ (\lambda+\mu)^2\cos^2\left(\frac{\theta}{2}\right) + (\mu-\lambda)^2\sin^2\left(\frac{\theta}{2}\right)}\,.
\]
The following equalities hold
\begin{align*}
V^2&=(\lambda^2+\mu^2)\left(\cos^2\left(\frac{\theta}{2}\right) + \sin^2\left(\frac{\theta}{2}\right)\right) +2\lambda\mu\left(\cos^2\left(\frac{\theta}{2}\right) - \sin^2\left(\frac{\theta}{2}\right)\right)\\
& = \lambda^2+\mu^2 + 2\lambda\mu\cos(\theta)\,.
\end{align*}
Therefore the maximum value is $V=\sqrt{\lambda^2+\mu^2 + 2\lambda\mu\cos(\theta)}$.
\end{proof}

\begin{corollary}\label{C:grandconjuguecassimple}
Let $\lambda,\mu$ be real numbers, and $\varepsilon>0$. Set 
\[
A=\max\left(1,\Ceil{2\pi\frac{\module{\lambda}+\module{\mu}}{\varepsilon}}\right)\,.
\]
Let $\theta$ be a real number. Set
\[
s=\lambda^2+\mu^2+2\lambda\mu\cos(\theta)\,.
\]
Then for all $n\ge A$, and all $1\le k\le n-1$ with $\gcd(k,n)=1$, there is $t\in\Z$ such that
\[
\lambda\cos\frac{k(\theta+2\pi t)}{n}+\mu\cos\frac{(n-k)(\theta+2\pi t)}{n}\ge\sqrt{s}-\varepsilon\,.
\]
\end{corollary}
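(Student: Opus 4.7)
The plan is to recognize the target expression as a sampling of the single-variable function
\[
f(x) \;=\; \lambda\cos\!\left(\tfrac{k\theta}{n}+x\right)+\mu\cos\!\left(\tfrac{(n-k)\theta}{n}-x\right),
\]
to apply Lemma~\ref{L:maxsommecos} in order to identify $\sup_{x\in\R}f(x)=\sqrt{s}$, and then to use the hypothesis $\gcd(k,n)=1$ to approximate an exact maximizer of $f$ by a point of the form $2\pi kt/n$ to within precision $\varepsilon$.

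First I would verify the reduction. Writing $\tfrac{(n-k)(\theta+2\pi t)}{n}=\tfrac{(n-k)\theta}{n}+2\pi t-\tfrac{2\pi kt}{n}$ and invoking the $2\pi$-periodicity of cosine, the expression in the conclusion equals exactly $f\!\bigl(\tfrac{2\pi kt}{n}\bigr)$. Lemma~\ref{L:maxsommecos} then yields $\sup_{x\in\R}f(x)=\sqrt{s}$, attained at some $x_0\in\R$ by continuity and $2\pi$-periodicity of $f$.

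Next I would exploit the coprimality. Because $\gcd(k,n)=1$, the set $\{kt\bmod n:t\in\Z\}$ is all of $\Z/n\Z$, so $\{\tfrac{2\pi kt}{n}\bmod 2\pi:t\in\Z\}$ is the uniform mesh $\{\tfrac{2\pi j}{n}:0\le j\le n-1\}$ of spacing $\tfrac{2\pi}{n}$. Hence some $t\in\Z$ satisfies $\module{\tfrac{2\pi kt}{n}-x_0}\le\tfrac{\pi}{n}$ modulo $2\pi$; the $2\pi$-periodicity of $f$ ensures that this is the only distance that matters.

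Finally, the Lipschitz bound $\module{f'(x)}\le\module{\lambda}+\module{\mu}$ gives
\[
\bigl|f(\tfrac{2\pi kt}{n})-f(x_0)\bigr|\;\le\;(\module{\lambda}+\module{\mu})\cdot\tfrac{\pi}{n}\;\le\;\tfrac{\varepsilon}{2}
\]
as soon as $n\ge A\ge\Ceil{2\pi(\module{\lambda}+\module{\mu})/\varepsilon}$, which yields $f(\tfrac{2\pi kt}{n})\ge\sqrt{s}-\varepsilon$, as desired. No step is a serious obstacle; the only delicate moment is correctly applying $2\pi$-periodicity to convert $+\tfrac{2\pi(n-k)t}{n}$ into $-\tfrac{2\pi kt}{n}$ so that the target genuinely matches the template of Lemma~\ref{L:maxsommecos}.
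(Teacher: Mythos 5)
Your proposal is correct and follows essentially the same route as the paper: invoke Lemma~\ref{L:maxsommecos} to identify the maximum $\sqrt{s}$ of the one-variable function $f(x)=\lambda\cos(\tfrac{k\theta}{n}+x)+\mu\cos(\tfrac{(n-k)\theta}{n}-x)$, observe (via $\gcd(k,n)=1$) that $\{\tfrac{2\pi kt}{n}\bmod 2\pi : t\in\Z\}$ is a $\tfrac{2\pi}{n}$-spaced mesh, and close the gap with a Lipschitz estimate on $f$. The only cosmetic differences are that the paper tracks $x-\tfrac{2\pi q}{n}\in[0,\tfrac{2\pi}{n}]$ one-sidedly and bounds the difference term by term via $\module{\cos a-\cos b}\le\module{a-b}$, while you center the approximation to get the sharper $\tfrac{\pi}{n}$ and use $\module{f'}\le\module{\lambda}+\module{\mu}$; both yield the required bound.
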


\begin{proof}
For each $t\in\Z$, we set
\begin{align*}
\alpha_t &= \lambda\cos\left( \frac{k(\theta+2\pi t)}{n}\right)+\mu\cos\left(\frac{(n-k)(\theta+2\pi t)}{n}\right)\\
&=\lambda\cos\left( \frac{k\theta}{n}+\frac{2\pi kt}{n}\right)+\mu\cos\left(\frac{(n-k)\theta}{n}-\frac{2\pi kt}{n}\right).
\end{align*}
Let $r$ be an integer such that $rk\equiv 1\mod n$. By Lemma~\ref{L:maxsommecos}, we can pick $x\in\R$ such that
\begin{equation}\label{E:cassimplesomecos1}
\sqrt{s}=\lambda\cos\left( \frac{k\theta}{n} + x\right)+\mu\cos\left(\frac{(n-k)\theta}{n} - x\right)\,.
\end{equation}
Set $q=\floor{\frac{nx}{2\pi}}$. Set $t=rq$, hence $kt=krq\equiv q\mod n$. The following equalities hold
\begin{equation}\label{E:cassimplesomecos2}
\alpha_t  =\lambda\cos\left( \frac{k\theta}{n}+\frac{2\pi q}{n}\right)+\mu\cos\left(\frac{(n-k)\theta}{n}-\frac{2\pi q}{n}\right)\,.
\end{equation}
The following inequalities hold
\begin{equation}\label{E:cassimplesomecos3}
0\le x-\frac{2\pi q}{n} = x - \frac{2\pi}{n}\Floor{\frac{nx}{2\pi}}\le \frac{2\pi}{n}\le \frac{\varepsilon}{\module{\lambda} + \module{\mu}}\,.
\end{equation}
However, $\module{\cos(a)-\cos(b)}\le \module{a-b}$ for all $a,b\in\R$. Therefore, subtracting \eqref{E:cassimplesomecos1} and \eqref{E:cassimplesomecos2}, and comparing the argument of the cosine function in \eqref{E:cassimplesomecos3}, we obtain
\[
\module{\alpha_t-\sqrt{s}}\le (\module{\lambda}+\module{\mu})\frac{\varepsilon}{\module{\lambda}+\module{\mu}} = \varepsilon\,.
\]
It follows that $\alpha_t\ge \sqrt{s}-\varepsilon$.
\end{proof}

\begin{lemma}\label{L:somecoszero}
Let $\gamma$ be a real number, $\ell\not=0$ be an integer, and $d>1$ be an integer. Assume that $d\notdiv\ell$. Then $\sum_{t=0}^{d-1} \cos\left(\gamma+\frac{t\ell}{d}2\pi\right)=0$.
\end{lemma}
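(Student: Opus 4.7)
The plan is to pass to complex exponentials and recognize the sum as a geometric series whose ratio is a nontrivial $d$-th root of unity.

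First I would write
\[
\sum_{t=0}^{d-1}\cos\!\left(\gamma+\frac{t\ell}{d}2\pi\right)=\operatorname{Re}\!\left(e^{i\gamma}\sum_{t=0}^{d-1}\omega^{t}\right),
\]
where $\omega=e^{2\pi i\ell/d}$. The hypothesis $d\nmid\ell$ is precisely the statement that $\omega\ne 1$, so the geometric sum evaluates to $(\omega^{d}-1)/(\omega-1)$. Since $\omega^{d}=e^{2\pi i\ell}=1$ (using $\ell\in\Z$), the numerator vanishes while the denominator does not, giving $\sum_{t=0}^{d-1}\omega^{t}=0$, hence the real part is $0$.

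The only step requiring any care is the check that $\omega\ne 1$; this is immediate from the definition of $d\nmid\ell$, so there is no real obstacle. The hypothesis $\ell\ne 0$ is not strictly needed for the algebraic identity itself (if $\ell=0$ the sum would be $d\cos\gamma$, which is why it is excluded), and $d>1$ is used implicitly so that the index set $\{0,\dots,d-1\}$ is nontrivial. Taking real parts at the end produces the desired vanishing of the cosine sum.
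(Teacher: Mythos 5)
Your proof is correct and takes essentially the same approach as the paper's: both pass to complex exponentials, recognize the geometric sum with ratio $e^{2\pi i\ell/d}$, and use $d\nmid\ell$ to ensure the ratio is a nontrivial $d$-th root of unity. If anything, you are slightly more careful than the paper, which applies the formula $(x^d-1)/(x-1)$ without explicitly remarking that the denominator is nonzero.
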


\begin{proof}
Set $x=e^{i\frac{\ell}{d}2\pi}$. Note that $x^d=1$. The following equalities hold:
\begin{align*}
\sum_{t=0}^{d-1} \cos\left(\gamma+\frac{t}{d}2\pi\right) &= \Re\sum_{t=0}^{d-1} \exp \left(i\gamma+i\frac{t\ell}{d}2\pi\right)\\
&=\Re\left(e^{i\gamma}\sum_{t=0}^{d-1} x^t\right)\\
&=\Re\left(e^{i\gamma}\frac{x^d-1}{x-1}\right)\\
&=0\tag*{\qed}
\end{align*}
\renewcommand{\qed}{}
\end{proof}

\begin{notation}
For $a\in\R$ and $b\in\R^*$, we denote $\modulo{a}{b}$ the only real number $c\in[-\frac{b}{2},\frac{b}{2}[$ such that $a-c\in b\Z$.
Note that for all $t\in\R^*$ we have $\modulo{ta}{tb}=t \modulo{a}{b}$. We will use the particular case $\modulo{\frac{a}{b} 2\pi}{2\pi}=\frac{\modulo{a}{b}}{b}2\pi$

Note that if $a,b$ are integers, than $R_b(a)$ is an integer. Also observe that if $a,a',b$ are integers, then $\module{\modulo{a a'}{b}}\le\module{\modulo{a}{b}}\times \module{\modulo{a'}{b}}$.
\end{notation}

\begin{lemma}\label{L:somecospositive}
Let $\lambda,\mu,\theta_1,\theta_2,\rho$ be real numbers, and $u$ be an integer. If $\modulo{u\rho}{2\pi}\not=u\modulo{\rho}{2\pi}$ and $\module{\modulo{u\rho}{2\pi}}<\frac{\pi}{2}$, then there is $\module{r}\le\max(1,\module{u}-1)$ such that $\lambda\cos(\theta_1+r\rho)+\mu\cos( \theta_2-r\rho)\ge 0$.
\end{lemma}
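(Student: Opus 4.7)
The plan is to recast the target inequality as a covering question on the circle $\R/2\pi\Z$, and then verify the covering via the two hypotheses. Set $g(r) := \lambda\cos(\theta_1 + r\rho) + \mu\cos(\theta_2 - r\rho)$. Expanding the cosines,
\[
g(r) = A\cos(r\rho) + B\sin(r\rho), \quad A := \lambda\cos\theta_1+\mu\cos\theta_2,\ B := \mu\sin\theta_2-\lambda\sin\theta_1.
\]
If $A=B=0$ then $g\equiv 0$ and $r=0$ works; otherwise $g(r) = \sqrt{A^2+B^2}\cos(r\rho - \phi)$ for a suitable $\phi \in \R$, so $g(r)\ge 0$ is equivalent to $r\rho \pmod{2\pi}$ lying in the closed arc $I := [\phi-\pi/2,\phi+\pi/2] \pmod{2\pi}$ of length $\pi$. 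Writing $S := \{r\rho \pmod{2\pi} : \module{r}\le\max(1,\module{u}-1)\}$, the problem reduces to showing that $S$ meets every closed arc of length $\pi$, equivalently, $S$ is not contained in any open arc of length $\pi$.

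Since $g$ depends only on $r\rho \pmod{2\pi}$, I would replace $\rho$ by $\modulo{\rho}{2\pi}$; the substitution $(r,\rho)\mapsto(-r,-\rho)$ preserves both $g$ and the hypotheses, so I may assume $\rho \in (0,\pi)$ (the edge cases $\rho \in \{0,\pm\pi\}$ are either excluded by hypothesis or trivial). A direct check shows both hypotheses fail for $\module{u}\le 1$, so I take $u \ge 2$. The first hypothesis, $\modulo{u\rho}{2\pi} \ne u\modulo{\rho}{2\pi}$, then amounts to $u\rho\ge\pi$, whence $\rho\ge\pi/u$. The key additional bound extracted from $\module{\modulo{u\rho}{2\pi}}<\pi/2$ is that $\rho > \pi/(u-1)$ for $u\ge 3$ and $\rho > 3\pi/4$ for $u=2$: indeed, if one had $\rho\le\pi/(u-1)$ with $u\ge 3$, then $u\rho\in[\pi, u\pi/(u-1)]\subseteq[\pi,3\pi/2]$, forcing $\modulo{u\rho}{2\pi} = u\rho - 2\pi \in [-\pi,-\pi/2]$ and therefore $\module{\modulo{u\rho}{2\pi}}\ge\pi/2$, contradicting the second hypothesis; an analogous short computation handles $u=2$.

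With these bounds I exhibit three points of $S$ whose three consecutive circular gaps are all at most $\pi$, which rules out $S$ being contained in any open arc of length $\pi$. For $u=2$, use $\{-\rho, 0, \rho\}$: the gaps are $\rho, \rho, 2\pi-2\rho$, all strictly less than $\pi$ since $\rho\in(3\pi/4,\pi)$. For $u\ge 3$, set $k:=\Ceil{\pi/\rho}$; the bound $\rho>\pi/(u-1)$ gives $k \in \{2,\dots,u-1\}$, so $\{0, (k-1)\rho, k\rho\}\subseteq S$ with $(k-1)\rho\in[\pi-\rho,\pi)$ and $k\rho\in[\pi,\pi+\rho)$, whose three gaps are $(k-1)\rho$, $\rho$, and $2\pi-k\rho$, each at most $\pi$. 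The only possible equality is $2\pi-k\rho=\pi$, i.e.\ $k\rho=\pi$, in which case $0$ and $k\rho$ are antipodal and still cannot share any open arc of length $\pi$. In every case $S\cap I\ne\es$, giving the required $r$ with $\module{r}\le u-1 \le \max(1,\module{u}-1)$.

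The main obstacle is the middle step — the translation of the somewhat opaque second hypothesis into the clean lower bound $\rho > \pi/(\module{u}-1)$ (together with the bookkeeping of the boundary case $k\rho=\pi$); once these are in hand, the three-point geometric argument is routine.
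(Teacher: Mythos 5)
Your proposal is correct and starts from the same key move as the paper: writing the target as a single sinusoid $g(r)=\sqrt{A^2+B^2}\,\cos(r\rho-\phi)$, which turns the claim into the statement that the orbit $S=\setm{r\rho\bmod 2\pi}{\module{r}\le\max(1,\module{u}-1)}$ cannot be hidden inside an open half-circle. Where the two arguments diverge is in how they locate the witness. The paper keeps the form $\lambda'\cos(r\rho)+\mu'\sin(r\rho)$, takes $r=0$ at once when $\lambda'\ge 0$, and otherwise produces, by a minimality argument, the first $s\ge 1$ with $s\rho\notin[-\tfrac{\pi}{2},\tfrac{\pi}{2})$ (and $s\rho\in[-\pi,\pi)$, so $\cos(s\rho)\le 0$), then plays $s$ against $-s$. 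Notice that this route uses only the first hypothesis once $\module{u}\ge 2$; the hypothesis $\module{\modulo{u\rho}{2\pi}}<\tfrac{\pi}{2}$ merely makes the case $u=-1$ vacuous. Your version instead converts the second hypothesis into the explicit lower bound $\rho>\pi/(\module{u}-1)$ (resp.\ $\rho>\tfrac{3\pi}{4}$ when $\module{u}=2$) and exhibits the concrete three-point net $\set{0,(k-1)\rho,k\rho}$ with $k=\Ceil{\pi/\rho}$ whose circular gaps are all at most $\pi$. This makes the covering picture fully explicit and quantitatively sharp, at the cost of a little more bookkeeping, whereas the paper's minimality argument is shorter and, for $\module{u}\ge 2$, proves a marginally stronger statement. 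Two small points to tidy up in a final write-up: for $\module{u}\le 1$ it is not that ``both hypotheses fail'' but that they cannot both hold (for $u=-1$ the first hypothesis can hold, forcing $\modulo{\rho}{2\pi}=-\pi$, and it is then the second one that fails); and passing from $\module{u}\ge 2$ to $u\ge 2$ silently invokes the further symmetry $u\mapsto -u$, which is legitimate---the hypotheses and the bound on $\module{r}$ depend only on $\module{u}$ once $\modulo{u\rho}{2\pi}\ne -\pi$ is guaranteed by the second hypothesis---but should be said.
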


\begin{proof}
We cannot have $u=0$ or $u=1$. Assume that $u=-1$. It follows, from $\modulo{u\rho}{2\pi}\not=u\modulo{\rho}{2\pi}$, that $\rho=\pi$. Hence $\lambda\cos(\theta_1+\rho)+\mu\cos( \theta_2-\rho) = -(\lambda\cos(\theta_1)+\mu\cos( \theta_2))$. Hence $r=0$, or $r=1$ satisfies the required condition.

We now assume that $u\ge 2$. We can assume that $\modulo{\rho}{2\pi}=\rho$, that is $\rho\in[-\pi,\pi[$. Set $\lambda'=\lambda\cos(\theta_1) + \mu\cos(\theta_2)$, and $\mu'=\mu\sin(\theta_2)-\lambda\sin(\theta_1)$. Given $r\in\Z$, denote $f(r) = \lambda\cos(\theta_1+r\rho)+\mu\cos( \theta_2-r\rho)$, hence the following equalities hold
\begin{align*}
f(r)&= \lambda(\cos(\theta_1)\cos(r\rho)-\sin(\theta_1)\sin(r\rho)) + \mu(\cos(\theta_2)\cos(r\rho)+\sin(\theta_2)\sin(r\rho))\\
&= \lambda'\cos(r\rho) + \mu'\sin(r\rho).
\end{align*}
If $\lambda'\ge 0$, then $f(0)=\lambda'\ge 0$, as required. Assume that $\lambda'<0$. We first want to find $s$ such that $\modulo{s\rho}{2\pi}=s\rho$ and $s\rho\not \in[-\frac{\pi}{2},\frac{\pi}{2}[$.

If $\rho \not\in[-\frac{\pi}{2},\frac{\pi}{2}[$, then $s=1$ satisfies the desired conditions. Assume that $\rho \in[-\frac{\pi}{2},\frac{\pi}{2}[$. As $u\rho\not\in[-\pi,\pi[$, we can take the smallest $s\ge 1$ such that  $s\rho\not\in[-\frac{\pi}{2},\frac{\pi}{2}[$. From the minimality we have $(s-1)\rho\in[-\frac{\pi}{2},\frac{\pi}{2}[$, however $\rho \in[-\frac{\pi}{2},\frac{\pi}{2}[$, hence $s\rho\in[-\pi,\pi[$. Thus $\modulo{s\rho}{2\pi}=s\rho$, in particular $\module{s}\le\module{u}-1$.

Note that $\cos(s\rho)=\cos(-s\rho)\le 0$. Moreover $\sin(s\rho)=-\sin(-s\rho)$. Hence $f(s)=\lambda'\cos(s\rho)+\mu'\sin(s\rho)\ge 0$ or $f(-s)=\lambda'\cos(s\rho)-\mu'\sin(s\rho)\ge 0$.

The proof is similarly if $u\le -2$.
\end{proof}

\begin{lemma}\label{L:smallcongruence}
Let $a\in\Z$ and $b>0$ an integer, then there exists $u\in\Z\setminus\set{0}$ such that $\module{u}\le\sqrt{b}$ and $\module{\modulo{u}{b}}\le\sqrt{b}$.
\end{lemma}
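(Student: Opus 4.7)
The natural reading is that we seek $u$ with $\module{\modulo{ua}{b}}\le\sqrt{b}$ (otherwise the hypothesis $a\in\Z$ is unused and $u=1$ already satisfies the literal conclusion). Under this interpretation the plan is a direct Dirichlet box argument applied to the multiples $0,a,2a,\dots,\floor{\sqrt{b}}\cdot a$ modulo $b$.

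Set $N=\floor{\sqrt{b}}$ and consider the $N+1$ residues $\phi(k)=\modulo{ka}{b}$ for $0\le k\le N$, viewed as points on the circle $\R/b\Z$ of circumference $b$. I would first dispose of the degenerate case where two of these points coincide: if $\phi(i)=\phi(j)$ with $0\le i<j\le N$, then $u=j-i$ satisfies $1\le u\le N\le\sqrt{b}$ and $\modulo{ua}{b}=0$, so both required inequalities hold.

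Otherwise the $N+1$ distinct points subdivide the circle into $N+1$ arcs whose lengths sum to $b$, so some arc has length at most $b/(N+1)$. Since $N\ge\sqrt{b}-1$, we have $N+1\ge\sqrt{b}$, and hence the minimal arc length is at most $b/\sqrt{b}=\sqrt{b}$. Picking $0\le i<j\le N$ so that $\phi(i),\phi(j)$ are the endpoints of such a shortest arc and setting $u=j-i$ yields $1\le u\le N\le\sqrt{b}$; moreover the circular distance from $\phi(i)$ to $\phi(j)$ on $\R/b\Z$ coincides with $\module{\modulo{(j-i)a}{b}}=\module{\modulo{ua}{b}}$ (using the centred representative in $[-b/2,b/2)$), and this is at most $\sqrt{b}$ by construction.

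There is no real obstacle: the only verifications needed are the inequality $N+1\ge\sqrt{b}$ and the identification of arc length on $\R/b\Z$ with the absolute value of the centred representative, both routine.
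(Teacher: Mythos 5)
Your reading of the statement is correct: as written the lemma has a typo (and is applied later with $\modulo{ua}{b}$, not $\modulo{u}{b}$; see the proof of Lemma~\ref{L:enumerationcastheoremanalyse}), so the intended conclusion is indeed $\module{\modulo{ua}{b}}\le\sqrt{b}$. Your proof is correct, but it takes a different route than the paper. The paper sets up the linear system $\module{u}\le\sqrt{b}$, $\module{ua+vb}\le\sqrt{b}$ in $(u,v)\in\R^2$, observes that the solution set is a parallelogram of area $4$, and invokes Minkowski's convex body theorem to produce a nonzero lattice point $(u,v)\in\Z^2$; the estimate $\module{\modulo{ua}{b}}\le\module{ua+vb}$ finishes the argument. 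You instead run a Dirichlet pigeonhole argument: place the $\lfloor\sqrt{b}\rfloor+1$ residues $\modulo{ka}{b}$ on the circle $\R/b\Z$, either two coincide (giving $\modulo{ua}{b}=0$) or they cut the circle into arcs of total length $b$, one of which has length at most $b/(\lfloor\sqrt{b}\rfloor+1)\le\sqrt{b}$, and the difference of the indices of its endpoints gives the desired $u$. Both approaches are standard; the Minkowski argument is shorter given the citation, while yours is more elementary and self-contained, needing nothing beyond pigeonhole and the identification of arc length with $\module{\modulo{\cdot}{b}}$, which you verify correctly. One tiny point worth making explicit if you wrote this up: the shortest arc is in fact the \emph{geodesic} between its endpoints because its length is at most $b/(\lfloor\sqrt{b}\rfloor+1)\le b/2$ (using $\lfloor\sqrt{b}\rfloor\ge 1$ for $b\ge 1$), so arc length genuinely equals circular distance there.
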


\begin{proof}
If $b=1$ the it is easy. Assume that $b\ge 2$. Consider the following system with indeterminate $u,v$ in $\R$.
\begin{equation*}
\begin{cases}
\module{u}\le\sqrt{b}\\
\module{ua+vb}\le\sqrt{b}.
\end{cases}
\end{equation*}
The set of solutions is a parallelogram of height $2\sqrt{b}$, and base $2\frac{\sqrt{b}}{b}$. Hence the surface is $4$. Therefore, by Minkowski's Theorem (see for instance \cite[Lemma, p. 137] {Marcus}), there is a solution $(u,v)\in\Z^2\setminus\set{(0,0)}$.

Thus we have $\module{u}\le\sqrt{b}$ and $\module{\modulo{ua}{b}}\le \module{ua+vb}\le \sqrt{b}$. Moreover if $u=0$, then $v\not=0$ and $b\le \module{vb}=\module{ua+vb}\le\sqrt{b}$; a contradiction. Therefore $u\not=0$ as required.
\end{proof}

\begin{lemma}\label{L:enumerationcastheoremanalyse}
Let $1\le k,\ell<n$ be integers such that $\gcd(k,\ell,n)=1$, $k\not=\ell$, and $k\not=n-\ell$. Then one of the following statement holds.
\begin{enumerate}
\item $1<\gcd(k,n)\le\sqrt{n}$ or $1<\gcd(\ell,n)\le\sqrt{n}$.
\item There are non-zero integers $r,u$ such that $\module{u}\le\sqrt{n}$, $rk\equiv 1\mod n$,
$u(\modulo{r\ell}{n})\not=\modulo{ur\ell}{n}$, and $\modulo{ur\ell}{n}\le\sqrt{n}$.
\item There are non-zero integers $r,u$ such that $\module{u}\le\sqrt{n}$, $r\ell\equiv 1\mod n$,
$u(\modulo{rk}{n})\not=\modulo{urk}{n}$, and $\modulo{urk}{n}\le\sqrt{n}$.
\end{enumerate}
\end{lemma}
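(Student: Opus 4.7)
The plan is to first isolate when conclusion (1) can fail. Set $d_k=\gcd(k,n)$ and $d_\ell=\gcd(\ell,n)$. The hypothesis $\gcd(k,\ell,n)=1$ reads $\gcd(d_k,d_\ell)=1$, so these coprime divisors of $n$ satisfy $d_kd_\ell\mid n$; hence, if (1) fails, each of $d_k,d_\ell$ lies in $\{1\}\cup(\sqrt n,n]$, and the two cannot both exceed $\sqrt n$. In particular at least one of them equals $1$, and by the symmetry between conclusions (2) and (3) we may assume $d_k=1$ and aim at (2).

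Fix $r\in\Z$ with $rk\equiv 1\pmod n$ and set $m=\modulo{r\ell}{n}$. The hypotheses $\ell\not\equiv 0$, $k\ne\ell$ and $k\ne n-\ell$ rule out respectively $m=0$, $m=1$ and $m=-1$, so $|m|\ge 2$. When $|m|>\sqrt n$, Lemma~\ref{L:smallcongruence} applied to $a=r\ell$ and $b=n$ produces a nonzero $u$ with $|u|\le\sqrt n$ and $|\modulo{ur\ell}{n}|\le\sqrt n$; the wrap-around $u\,\modulo{r\ell}{n}\ne\modulo{ur\ell}{n}$ is automatic, because otherwise $|um|=|\modulo{um}{n}|\le\sqrt n<|m|$ would force $|u|<1$, contradicting $u\ne 0$. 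Hence (2) holds in this sub-case.

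The delicate sub-case is $|m|\le\sqrt n$. Since $d_\ell\mid r\ell$ and $d_\ell\mid n$ we have $d_\ell\mid m$, and $m\ne 0$ gives $d_\ell\le|m|\le\sqrt n$; the case analysis of the first paragraph then forces $d_\ell=1$. Let $r'\in\Z$ invert $\ell$ modulo $n$, and set $m'=\modulo{r'k}{n}$; by the same exclusion $|m'|\ge 2$, and $mm'\equiv(rk)(r'\ell)\equiv 1\pmod n$. If $|m'|>\sqrt n$, the symmetric argument with the roles of $k$ and $\ell$ swapped yields (3). Otherwise $|mm'|\le n$ together with $mm'\equiv 1\pmod n$ leaves only $mm'\in\{1,1-n\}$; the value $1$ would force $|m|=|m'|=1$, contradicting $|m|\ge 2$, so $mm'=1-n$. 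Then $u:=m'$ is nonzero, $|u|\le\sqrt n$, and $um=1-n$; therefore $\modulo{ur\ell}{n}=\modulo{1-n}{n}=1$ supplies both the residue bound and the wrap-around $1-n\ne 1$, and (2) is established.

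The main obstacle is this final situation where both $|m|$ and $|m'|$ are at most $\sqrt n$: neither application of Lemma~\ref{L:smallcongruence} by itself enforces wrap-around, and one must instead exploit the arithmetic identity $mm'\equiv 1\pmod n$, sharpened by the product bound $|mm'|\le n$ and the exclusions $|m|,|m'|\ge 2$ into the single possibility $mm'=1-n$, in order to produce an explicit $u=m'$ that meets all three conditions of (2) at once.
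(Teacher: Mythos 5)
Your proof is correct, and it actually streamlines the paper's argument in a noticeable way. The paper, after reducing to $\gcd(k,n)=\gcd(\ell,n)=1$ and setting $m=\modulo{r\ell}{n}$, $m'=\modulo{qk}{n}$ (where $r,q$ are the inverses of $k,\ell$), makes a three-way split on $|m|$ (and symmetrically $|m'|$): if $|m|>\sqrt n$ it invokes Lemma~\ref{L:smallcongruence}; if $\sqrt{n-\sqrt n}\le|m|\le\sqrt n$ it takes $u=m$ itself and verifies wrap-around from $m^2\in[n-\sqrt n,\,n]$; and if both $|m|,|m'|<\sqrt{n-\sqrt n}$ it derives a contradiction via $|mm'|<n-\sqrt n$ forcing $|mm'|=1$. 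You collapse the last two cases into one: whenever $|m|,|m'|\le\sqrt n$ you observe $|mm'|\le n$ and $mm'\equiv 1\pmod n$, which together with $|m|,|m'|\ge2$ pins down $mm'=1-n$, and then $u:=m'$ directly satisfies all three conditions of (2), with $\modulo{ur\ell}{n}=\modulo{1-n}{n}=1$. This avoids the intermediate threshold $\sqrt{n-\sqrt n}$ entirely. You also handle the $\gcd>\sqrt n$ branch differently: rather than explicitly constructing $u=n/\gcd(k,n)$ as the paper does, you note $d_\ell\mid m$ so a small residue $|m|\le\sqrt n$ would force $d_\ell\le\sqrt n$, which was already excluded, and the large-residue branch is covered by Lemma~\ref{L:smallcongruence}. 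Both arguments are sound; yours buys a cleaner case tree, while the paper's keeps the two residues more independent and arrives at an outright contradiction rather than an explicit $u$ in the final case.
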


\begin{proof}
Note that $k,\ell,n-k,n-\ell$ are all distinct in $\set{1,\dots,n}$, thus $n\ge 5$. As $\gcd(k,\ell,n)=1$, it follows that $\gcd(\ell,n)\times\gcd(k,n)$ divides $n$

Assume that $\gcd(k,n)>\sqrt{n}$. Then $\gcd(\ell,n)\le\frac{n}{\gcd(k,n)}<\sqrt{n}$. If $\gcd(\ell,n)>1$ then $(1)$ holds. Assume that $\gcd(\ell,n)=1$. Let $r$ be an inverse of $\ell$ modulo $u$, thus $r\ell\equiv 1\mod n$. Set $u=\frac{n}{\gcd(k,n)}$, so $\module{u}=u\le \sqrt{n}$ and $\modulo{urk}{n} = \modulo{\frac{rk}{\gcd(k,n)}n} {n} = 0$. As $r\ell\equiv 1\mod n$ and $k<n$ it follows that $n\notdiv rk$, thus $\modulo{rk}{n}\not=0$. So $u(\modulo{rk}{n})\not=\modulo{urk}{n}$, therefore $(3)$ holds.

Similarly if $\gcd(\ell,n)>\sqrt{n}$, then $(1)$ or $(3)$ holds. Hence we can assume that $\gcd(k,n)\le \sqrt{n}$ and $\gcd(\ell,n)\le \sqrt{n}$. If $\gcd(k,n)>1$, or $\gcd(\ell,n)>1$ then $(1)$ holds. So we can assume that $\gcd(k,n)=1=\gcd(\ell,n)$.

Let $1\le r<n$ be the inverse of $k$ modulo $n$, and $1\le q<n$ be the inverse of $\ell$ modulo $n$. First assume that $\module{\modulo{r\ell}{n}}>\sqrt{n}$. It follows from Lemma~\ref{L:smallcongruence} that there is $u\in\Z\setminus\set{0}$ such that $\module{u}\le\sqrt{n}$ and $\module{\modulo{ur\ell}{n}}\le\sqrt{n}$. So $\module{u(\modulo{r\ell}{n})}=\module{u}\module{\modulo{r\ell}{n}}>\sqrt{n}\ge \module{\modulo{ur\ell}{n}}$, thus $(2)$ holds. Similarly $\module{\modulo{qk}{n}}>\sqrt{n}$ then $(3)$ holds. Therefore we can assume that $\module{\modulo{r\ell}{n}}\le\sqrt{n}$ and $\module{\modulo{qk}{n}}\le\sqrt{n}$.

Assume that $\module{\modulo{r\ell}{n}}\ge\sqrt{n-\sqrt{n}}$. Set $u=\modulo{r\ell}{n}$, note that $\module{u}\le\sqrt{n}$, moreover $n\ge u(\modulo{r\ell}{n}) = (\modulo{r\ell}{n})^2\ge \sqrt{n-\sqrt{n}}^2=n-\sqrt{n}$, and so $\module{\modulo{ur\ell}{n}}\le\sqrt{n}$. Moreover $u(\modulo{r\ell}{n})\ge n-\sqrt{n}>\sqrt{n}$, hence $u(\modulo{r\ell}{n})\not=\modulo{ur\ell}{n}$, therefore $(2)$ holds. Similarly if $\module{\modulo{r\ell}{n}}\ge\sqrt{n-\sqrt{n}}$ then $(3)$ holds.

We can assume that $\module{\modulo{r\ell}{n}}<\sqrt{n-\sqrt{n}}$ and $\module{\modulo{qk}{n}}<\sqrt{n-\sqrt{n}}$. Thus $\module{\modulo{r\ell}{n}}\times \module{\modulo{qk}{n}}<n-\sqrt{n}$. However, $rkq\ell\equiv 1\mod n$, hence $\module{\modulo{r\ell}{n}}\times \module{\modulo{qk}{n}}\equiv\pm 1\mod n$. Therefore $\module{\modulo{r\ell}{n}}\times \module{\modulo{qk}{n}}=1$. It follows that $\module{\modulo{r\ell}{n}}= \module{\modulo{qk}{n}}=1$, so that $r\ell\equiv \pm 1\mod n$, thus $k\equiv kr\ell\equiv \pm\ell\mod n$. Therefore $k=\ell$ or $k=n-\ell$; a contradiction.
\end{proof}

\begin{theorem}\label{T:conjuguecascomplexe}
Let $\lambda_1,\mu_1,\lambda_2,\mu_2$ be real numbers and $\varepsilon>0$. Set 
\[
A=\max\left(17,\Ceil{2\pi\frac{\module{\lambda_1}+\module{\lambda_2}+\module{\mu_1}+\module{\mu_2}}{\varepsilon}}^2\right)\,.
\]
Let $\theta_1, \theta_2$ be real numbers. Set
\[
s=\min(\lambda_1^2+\mu_1^2+2\lambda_1\mu_1\cos(\theta_1),\lambda_2^2+\mu_2^2+2\lambda_2\mu_2\cos(\theta_2))\,.
\]
Then for all $n\ge A$, and all $1\le k,\ell\le n-1$ with $k\not=\ell$, $k\not=n-\ell$, and $\gcd(k,\ell,n)=1$, there is $t\in\Z$ such that
\[
\lambda_1\cos\frac{k(\theta_1+2\pi t)}{n}+\mu_1\cos\frac{(n-k)(\theta_1+2\pi t)}{n}+\lambda_2\cos\frac{\ell(\theta_2+2\pi t)}{n}+\mu_2\cos\frac{(n-\ell)(\theta_2+2\pi t)}{n}>\sqrt{s}-\varepsilon\,.
\]
\end{theorem}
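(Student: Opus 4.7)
Let $M=\module{\lambda_1}+\module{\mu_1}+\module{\lambda_2}+\module{\mu_2}$ and, for $i\in\set{1,2}$, write
\[
g_i(t)=\lambda_i\cos\frac{k_i(\theta_i+2\pi t)}{n}+\mu_i\cos\frac{(n-k_i)(\theta_i+2\pi t)}{n},
\]
where $k_1=k$ and $k_2=\ell$; set $s_i=\lambda_i^2+\mu_i^2+2\lambda_i\mu_i\cos\theta_i$, so $s=\min(s_1,s_2)$ and, by Lemma~\ref{L:maxsommecos}, $\sup_{t\in\R}g_i(t)=\sqrt{s_i}$. The choice of $A$ forces $2\pi/\sqrt n\le\varepsilon/M$. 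The uniform plan is to locate an integer $t$ for which one $g_i(t)$ is within $\varepsilon$ of $\sqrt{s_i}\ge\sqrt s$ while the other pair is non-negative; the sum then exceeds $\sqrt s-\varepsilon$. Since the statement is symmetric under the swap $(k,\lambda_1,\mu_1,\theta_1)\leftrightarrow(\ell,\lambda_2,\mu_2,\theta_2)$, alternative (3) of Lemma~\ref{L:enumerationcastheoremanalyse} reduces to (2), and within (1) I may assume the large gcd is $\gcd(k,n)$.

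In alternative (1), set $d=\gcd(k,n)$ with $1<d\le\sqrt n$. The values $\modulo{2\pi kt/n}{2\pi}$ form a grid of step $2\pi d/n\le 2\pi/\sqrt n$, so the Lipschitz estimate $\module{\cos a-\cos b}\le\module{a-b}$ yields some $t_0$ with $g_1(t_0)\ge\sqrt{s_1}-\varepsilon/2$. Along the coset $\set{t_0+jn/d\mid 0\le j<d}$, $g_1$ is constant (since $d\mid k$), while the two cosines appearing in $g_2$ are shifted by multiples of $2\pi\ell/d$ and $2\pi(n-\ell)/d$ respectively. From $\gcd(k,\ell,n)=1$ together with $d\mid k$ and $d\mid n$, I get $d\notdiv\ell$ and $d\notdiv(n-\ell)$, so Lemma~\ref{L:somecoszero} forces $\sum_{j=0}^{d-1}g_2(t_0+jn/d)=0$. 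Some term is $\ge 0$ and the corresponding $t=t_0+jn/d$ works.

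In alternative (2), $\gcd(k,n)=1$ and integers $r,u$ are given with $rk\equiv 1\pmod n$, $\module{u}\le\sqrt n$, $u\modulo{r\ell}{n}\ne\modulo{ur\ell}{n}$, $\module{\modulo{ur\ell}{n}}\le\sqrt n$. Setting $\rho=2\pi\modulo{r\ell}{n}/n\in[-\pi,\pi)$, multiplication by $2\pi/n$ rewrites these integer conditions as $\modulo{u\rho}{2\pi}\ne u\modulo{\rho}{2\pi}$ and $\module{\modulo{u\rho}{2\pi}}\le 2\pi/\sqrt n<\pi/2$ (using $n\ge 17$); in particular $\module{u}\ge 2$. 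Choose $j_0\in\Z$ with $\module{2\pi j_0/n-x_1^{\ast}}\le\pi/n$ for an optimizer $x_1^{\ast}$ of $g_1$ from Lemma~\ref{L:maxsommecos}, and take $T\in\Z$ with $kT\equiv j_0\pmod n$. A direct computation shows $g_2(T+u'r)=\lambda_2\cos(\widetilde\theta_1+u'\rho)+\mu_2\cos(\widetilde\theta_2-u'\rho)$ for appropriate $\widetilde\theta_1,\widetilde\theta_2$ depending on $T$, so Lemma~\ref{L:somecospositive} delivers an integer $r'$ with $\module{r'}\le\max(1,\module{u}-1)\le\sqrt n-1$ and $g_2(T+r'r)\ge 0$. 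Setting $t=T+r'r$, the argument of $g_1(t)$ sits at distance at most $\pi/n+2\pi\module{r'}/n<2\pi/\sqrt n\le\varepsilon/M$ from $x_1^{\ast}$, hence $g_1(t)>\sqrt{s_1}-\varepsilon$, and $g_1(t)+g_2(t)>\sqrt s-\varepsilon$ as required.

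The main obstacle is the bookkeeping in alternative (2): one must identify the restriction of $g_2$ to the arithmetic progression $\set{T+u'r\mid u'\in\Z}$ with a family of the form handled by Lemma~\ref{L:somecospositive}, and carefully convert the mod-$n$ hypotheses of Lemma~\ref{L:enumerationcastheoremanalyse} into the mod-$2\pi$ hypotheses via the scaling $\rho=2\pi\modulo{r\ell}{n}/n$. The quadratic size $n\ge\Ceil{2\pi M/\varepsilon}^2$ is calibrated precisely so that the final shift, proportional to $\module{r'}/n\le 1/\sqrt n$, keeps the loss in $g_1$ strictly below $\varepsilon$.
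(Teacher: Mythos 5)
Your proof is correct and follows essentially the same route as the paper's: reduce to the three alternatives of Lemma~\ref{L:enumerationcastheoremanalyse} (using the $k\leftrightarrow\ell$ symmetry to merge (3) into (2)), then in alternative (1) hold $g_1$ constant along the coset $t_0+j\,n/d$ and apply Lemma~\ref{L:somecoszero} to make $g_2$ non-negative somewhere, and in alternative (2) stabilize $g_1$ along $T+r'r$ within the window $\module{r'}\le\module{u}-1$ and invoke Lemma~\ref{L:somecospositive} after converting the integer congruence data into the mod-$2\pi$ hypotheses via $\rho=2\pi\modulo{r\ell}{n}/n$. The bookkeeping details (choosing the nearest grid point versus a floor, the exact factor of $\pi$ vs.\ $2\pi$ in the approximation budget) differ slightly but inconsequentially from the paper's.
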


\begin{proof}
Let $x$ be as in Lemma~\ref{L:maxsommecos}, that is
\begin{equation}\label{E:xmaximum}
\lambda_1\cos\left(\frac{k\theta_1}{n}+x\right)+\mu_1\cos\left(\frac{(n-k)\theta_1}{n}-x\right) = \sqrt{\lambda_1^2+\mu_1^2+2\lambda_1\mu_1\cos(\theta)}\ge\sqrt{s}\,.
\end{equation}

Let $n$ be $\geq A$, let $1\le k,\ell\le n-1$ be with $k\not=\ell$, $k\not=n-\ell$, and $\gcd(k,\ell,n)=1$. For all $j\in\Z$, we denote
\[
\alpha_j = \lambda_1\cos\frac{k(\theta_1+2\pi j)}{n}+\mu_1\cos\frac{(n-k)(\theta_1+2\pi j)}{n}
\]
\[
\beta_j =\lambda_2\cos\frac{\ell(\theta_2+2\pi j)}{n}+\mu_2\cos\frac{(n-\ell)(\theta_2+2\pi j)}{n}.
\]

One of the statement of Lemma~\ref{L:enumerationcastheoremanalyse}(1), (2), or (3) holds. Assume that Lemma~\ref{L:enumerationcastheoremanalyse}(1) holds. By permuting $k$ and $\ell$, we can assume $1<\gcd(k,n)\le\sqrt{n}$. Set $d=\gcd(k,n)$. Let $r\in\Z$ such that $kr\equiv d\mod n$. Set $q=\floor{\frac{nx}{2\pi d}}$. The following equalities hold: 
\begin{align*}
\alpha_{rq} &= \lambda_1\cos\frac{k(\theta_1+2\pi rq)}{n}+\mu_1\cos\frac{(n-k)(\theta_1+2\pi rq)}{n} \\
&= \lambda_1\cos\left(\frac{k\theta_1}{n} +\frac{2\pi krq}{n}\right)+\mu_1\cos\left(\frac{(n-k)\theta_1}{n}-\frac{2\pi krq}{n}\right) \\
&= \lambda_1\cos\left(\frac{k\theta_1}{n} +\frac{2\pi dq}{n}\right)+\mu_1\cos\left(\frac{(n-k)\theta_1}{n}-\frac{2\pi dq}{n}\right). \\
\end{align*}
Note that the following inequalities hold:
\[
0\le x-\frac{2\pi dq}{n} = x - \frac{2\pi d}{n}\Floor{\frac{nx}{2\pi d}}\le \frac{2\pi d}{n}\le \frac{2\pi}{\sqrt{n}}\le\frac{2\pi}{\sqrt{A}}\le \frac{\varepsilon}{\module{\lambda_1} + \module{\mu_1}}\,.
\]
However, $\module{\cos(a)-\cos(b)}\le \module{a-b}$ for all $a,b\in\R$. It follows that
\[
\alpha_{rq} \ge    \lambda_1\cos\left(\frac{k\theta_1}{n} + x\right)+\mu_1\left(\cos\frac{(n-k)\theta_1}{n}-x\right)- (\module{\lambda_1}+\module{\mu_1})\frac{\varepsilon}{\module{\lambda_1}+\module{\mu_1}} = \sqrt{s}-\varepsilon\,.
\]
Set $r_1=\frac{n}{d}$ and $r_2=\frac{k}{d}$. So $kr_1=nr_2$. For all $t\in\Z$ the following equalities hold:
\begin{align*}
\alpha_{rq+tr_1} &=  \lambda_1\cos\frac{k(\theta_1+2(rq+tr_1)\pi)}{n}+\mu_1\cos\frac{(n-k)(\theta_1+2(rq+tr_1)\pi)}{n}\\
&= \lambda_1\cos\left(\frac{k(\theta_1+2\pi pq)}{n}+\frac{2\pi ktr_1}{n}\right)+\mu_1\cos\left(\frac{(n-k)(\theta_1+2\pi rq)}{n}-\frac{2\pi ktr_1}{n}\right)\\
&= \lambda_1\cos\left(\frac{k(\theta_1+2\pi rq)}{n}+2\pi tr_2\right)+\mu_1\cos\left(\frac{(n-k)(\theta_1+2pr\pi)}{n}-2\pi tr_2\right)\\
&=\alpha_{rq}.
\end{align*}
Set $\gamma_1 = \frac{\ell(\theta_2+2\pi rq)}{n}$ and $\gamma_2 = \frac{(n-\ell)(\theta_2+2\pi rq)}{n}$. For all $t\in\Z$ the following equalities hold:
\begin{align*}
\beta_{rq+tr_1} &= \lambda_2\cos\left(\frac{\ell(\theta_2+2\pi rq)}{n} + t\frac{2\pi r_1 \ell}{n}\right)+\mu_2\cos\left(\frac{(n-\ell)(\theta_2+2\pi rq)}{n} - t\frac{2\pi r_1 \ell}{n} \right) \\
&=\lambda_2\cos\left(\gamma_1+t\frac{\ell}{d}2\pi\right) + \mu_2\cos\left(\gamma_2-t\frac{\ell}{d}2\pi\right).
\end{align*}
Note that $\gcd(d,\ell)=1$. Then, we can deduce from Lemma~\ref{L:somecoszero} that $\sum_{t=0}^{d-1}\beta_{pr+tr_1} = 0$, hence there is $1\le t\le d-1$ such that $\beta_{pr+tr_1}\ge 0$. Therefore
\[
\alpha_{rq+tr_1}+\beta_{rq+tr_1} \ge \alpha_{rq}\ge \sqrt{s}-\varepsilon\,.
\]

Now assume that Lemma~\ref{L:enumerationcastheoremanalyse}(2) holds. Let $r,u$ be non-zero integers such that $\module{u}\le\sqrt{n}$, $rk\equiv 1\mod n$, $u\modulo{r\ell}{n}\not=\modulo{ur\ell}{n}$, and $\modulo{ur\ell}{n}\le\sqrt{n}$. Set $q=\floor{\frac{nx}{2\pi}}$. For each $t\in\Z$ we have 
\begin{equation*}
\alpha_{r ( q+t )} =  \lambda_1\cos\left(\frac{k\theta_1}{n}+\frac{2\pi q}{n}+\frac{2\pi t}{n} \right)+\mu_1\cos\left(\frac{(n-k)(\theta_1)}{n}-\frac{2\pi q}{n}-\frac{2\pi t}{n}\right)
\end{equation*}
If $\module{t}\le\module{u}-1$ then the following inequalities hold
\[
\module{\frac{2\pi q}{n} + \frac{2\pi t}{n} -x}\le\frac{2\pi}{n} + \frac{2\pi\module{t}}{n}\le \frac{2\pi\module{u}}{n}\le\frac{2\pi}{\sqrt{A}}\le \frac{\varepsilon}{\module{\lambda_1}+\module{\mu_1}}\,.
\]
As before it follows that
\begin{equation*}
\alpha_{r(q+t)} \ge \lambda_1\cos\left(\frac{k\theta_1}{n} + x\right)+\mu_1\left(\cos\frac{(n-k)\theta_1}{n}-x\right)- \varepsilon\,.
\end{equation*}
We obtain the following inequality
\begin{equation}\label{E:minorealpha}
\alpha_{r(q+t)} \ge \sqrt{s}-\varepsilon\,,\quad\text{for all $\module{t}\le\module{u}-1$}.
\end{equation}

Note that
\begin{equation*}
\beta_{r(q+t)} = \lambda_2\cos\left(\frac{\ell(\theta_2+2\pi rq)}{n} +t\frac{2\pi r\ell}{n}  \right)+\mu_2\left(\cos\frac{(n-\ell)(\theta_2+2\pi rq)}{n}-t\frac{2\pi r\ell}{n}\right). 
\end{equation*}
Set $\gamma_1= \frac{\ell(\theta_2+2\pi rq)}{n}$, $\gamma_2 = \frac{(n-\ell)(\theta_2+2\pi rq)}{n}$, and $\rho=\frac{2\pi r\ell}{n}$. Therefore the following equality holds.
\begin{equation}\label{E:anglesetcos}
\beta_{r(q+t)}  = \lambda_2\cos(\gamma_1+t\rho) + \mu_2\cos(\gamma_2-t\rho)\,.
\end{equation}
As $u\modulo{r\ell}{n}\not=\modulo{ur\ell}{n}$, we have $u\modulo{\frac{r\ell}{n}2\pi}{2\pi}\not=\modulo{u\frac{r\ell}{n}2\pi}{2\pi}$, therefore $u\modulo{\rho}{2\pi}\not=\modulo{u\rho}{2\pi}$. As $\modulo{ur\ell}{n}\le\sqrt{n}$ we have $\modulo{u\rho}{2\pi} = \modulo{u\frac{r\ell}{n}2\pi}{2\pi} = \frac{\modulo{ur\ell}{n}}{n} 2\pi\le \frac{2\pi}{\sqrt{n}}<\frac{\pi}{2}$. Hence by Lemma~\ref{L:somecospositive} there is $\module{t}<\module{u}$ such that
\[
\beta_{r ( q+t )}  = \lambda_2\cos(\theta_1+t\rho) + \mu_2\cos(\theta_2-t\rho)\ge 0\,.
\]
Therefore by \eqref{E:minorealpha} we have $\alpha_{r(q+t)}+\beta_{r(q+t)}\ge\sqrt{s}-\varepsilon$.
\end{proof}

With a similar and simpler proof, we obtain the following variation of Theorem~\ref{T:conjuguecascomplexe}.

\begin{theorem}\label{T:conjuguecascomplexeavecthetasur2}
Let $\lambda_1,\mu_1,\lambda_2$ be real numbers, and $\varepsilon>0$. Set 
\[
A=\max\left(1,2\Ceil{2\pi\frac{\module{\lambda_1}+\module{\mu_1}}{\varepsilon}}\right)\,.
\]
Let $\theta_1, \theta_2$ be in $\R$. Set
\[
s=\lambda_1^2+\mu_1^2+2\lambda_1\mu_1\cos(\theta_1)\,.
\]
Then for all $n\ge A$ even, and all $1\le k,\ell\le n-1$ with $\gcd(k,\frac{n}{2})=1$ there is $t\in\Z$ such that
\[
\lambda_1\cos\frac{k(\theta_1+2\pi t)}{n}+\mu_1\cos\frac{(n-k)(\theta_1+2\pi t)}{n}+\lambda_2\cos\frac{\theta_2+2\pi t}{2}>\sqrt{s}-\varepsilon\,.
\]
\end{theorem}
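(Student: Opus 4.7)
The plan is to exploit the fact that the third summand $\lambda_2\cos\frac{\theta_2+2\pi t}{2}$ depends only on the parity of $t$: since $\cos(\theta_2/2+\pi t)=(-1)^t\cos(\theta_2/2)$, it equals $(-1)^t c$ with $c:=\lambda_2\cos(\theta_2/2)$. By choosing $t_0\in\{0,1\}$ so that $(-1)^{t_0}c=|c|\ge 0$ and restricting to $t=t_0+2j$, the third summand is automatically nonnegative, and the problem reduces to approximating the maximum of the first two summands along an arithmetic progression in~$t$.

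First I would invoke Lemma~\ref{L:maxsommecos} to pick $x\in\R$ with
\[
\lambda_1\cos\bigl(\tfrac{k\theta_1}{n}+x\bigr)+\mu_1\cos\bigl(\tfrac{(n-k)\theta_1}{n}-x\bigr)=\sqrt{s},
\]
and then produce $j$ so that $2\pi kt/n$ (with $t=t_0+2j$) approximates $x$ modulo $2\pi$. The arithmetic input is that $n$ being even combined with $\gcd(k,n/2)=1$ forces $\gcd(2k,n)=2$, so $\{2kj\bmod n:j\in\Z\}=\{0,2,4,\dots,n-2\}$ and the associated angles $\{4\pi kj/n\bmod 2\pi\}$ fill the multiples of $4\pi/n$ in $[0,2\pi)$. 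Hence some $j$ puts $2\pi kt/n$ within $2\pi/n$ of $x$ modulo $2\pi$.

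From the Lipschitz bound $|\cos a-\cos b|\le|a-b|$, the first two summands at this $t$ then differ from $\sqrt{s}$ by at most $(|\lambda_1|+|\mu_1|)\cdot 2\pi/n$. Since $n\ge A\ge 4\pi(|\lambda_1|+|\mu_1|)/\varepsilon$, this error is at most $\varepsilon/2<\varepsilon$, and combining with $(-1)^tc\ge 0$ yields the required strict lower bound $>\sqrt{s}-\varepsilon$.

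The only real hurdle, compared with the one-term Corollary~\ref{C:grandconjuguecassimple}, is the need to simultaneously enforce a parity constraint on $t$ (coming from the third summand) and an approximation constraint on $2\pi kt/n$ (coming from the first two). This forces the step in the angle to coarsen from $2\pi/n$ to $4\pi/n$, which is exactly what the extra factor $2$ in the definition of $A$ accommodates; no analogue of the more delicate case analysis of Lemma~\ref{L:enumerationcastheoremanalyse} used in Theorem~\ref{T:conjuguecascomplexe} is required here.
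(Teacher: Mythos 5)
Your proof is correct and supplies exactly the ``similar and simpler proof'' the paper merely gestures at (the paper gives no argument beyond that one-line remark). The key simplification you identify is the right one: $\cos\frac{\theta_2+2\pi t}{2}=(-1)^t\cos\frac{\theta_2}{2}$ depends only on $t\bmod 2$, so fixing the parity $t_0$ once and for all makes the third summand nonnegative, and you are then free to approximate the maximizing angle $x$ of Lemma~\ref{L:maxsommecos} along $t\in t_0+2\Z$. Your arithmetic is sound: $n$ even with $\gcd(k,n/2)=1$ gives $\gcd(2k,n)=2$, so the residues $2kj\bmod n$ run over $\{0,2,\dots,n-2\}$ and the angles $\tfrac{4\pi kj}{n}\bmod 2\pi$ form an arithmetic net of mesh $\tfrac{4\pi}{n}$, putting some $\tfrac{2\pi kt}{n}$ within $\tfrac{2\pi}{n}$ of $x$ modulo $2\pi$; the Lipschitz bound then gives an error at most $(|\lambda_1|+|\mu_1|)\tfrac{2\pi}{n}\le\tfrac{\varepsilon}{2}<\varepsilon$, which is precisely what the extra factor $2$ in $A$ is there to absorb (and the degenerate case $\lambda_1=\mu_1=0$ is trivial). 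You also correctly note that none of the heavier machinery of Theorem~\ref{T:conjuguecascomplexe} (Lemma~\ref{L:enumerationcastheoremanalyse}, Lemma~\ref{L:somecoszero}, Lemma~\ref{L:somecospositive}) is needed, and that the unused parameter $\ell$ in the statement is vestigial.
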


\section{Determination of the Julia Robinson Number}\label{S:JR}

Let $0<a<b$ be coprime integers, let $\theta=\arccos(\frac{a}{b})\in]0,\frac{\pi}{2}[$. Denote $\alpha=e^{i\theta}=\frac{a+i\sqrt{b^2-a^2}}{b}$. Let $r\ge 2$ be an integer. Assume the following:
\begin{itemize}
\item $4$ divides $b$;
\item $v_2 ( b )-1$ is coprime to $r$;
\item $r$ divides $b$;
\item For all odd prime $p$ dividing $b$, we have $v_p ( b )$ coprime to $r$.
\end{itemize}
We fix such $a,b,r$ in the whole section. Subsequentely if $(a,b,r)$ satisfies those conditions we say that $(a,b,r)$ is good.

Given $n\ge 1$, denote $L_n=\mathbb{Q}(e^{\frac{i\theta}{n}})$, and $K_n=\mathbb{Q}(\cos(\frac{\theta}{n}))$.
Moreover, set
\[
L=\bigcup_{k\in\mathbb{N}}L_{r^k}\quad\text{ and } \quad K=\bigcup_{k\in\mathbb{N}}K_{r^k}.
\]
We say that $K$ is the totally real field associated to $(a,b,r)$. Denote by $N$ the smallest common multiple of $v_2 ( b )-1$, and all $v_p ( b )$ with $p$ odd prime.

Let $u$ be the square factor of $b+a$, and $v$ be the square factor of $b-a$, that is $\frac{b+a}{u^2}$ (respectively $\frac{b-a}{v^2}$) are square-free. Note that the conditions in the following notation are similar to (2)-(4) in Lemma \ref{L:caracterisionentieralgebrique}

\begin{notation}\label{N:notJR}
Let $0\le k\le N-1$. Denote by $S_k$ the set of all pairs $(x,y)\in\Q^2$ that satisfies the following conditions:
\begin{enumerate}
\item For all odd prime number $p$ such that $p$ divides $b$ we have $v_p(x)\ge \frac{k+1}{N} v_p(b)$ and $v_p(y)\ge \frac{N-k}{N} v_p(b)$;
\item We have $v_2(x)\ge 1+ \frac{k+1}{N} (v_2(b)-1)$ and $v_2(y)\ge 1+\frac{N-k}{N}( v_2(b)-1)$;
\item $u(x+y)$ and $v(x-y)$ are integers.
\end{enumerate}
We denote by $U=S_1\cup\dots S_{N-1}$.
\end{notation}

\begin{notation}\label{N:notJR2}
As before we consider the following map
\begin{align*}
N\colon\R^2 &\to\R\\
(x,y)&\mapsto N(x,y)=x^2+y^2+2xy\frac{a}{b}\,.
\end{align*}
Set $s=\inf\Famm{\sqrt{N(x,y)}}{0\le k\le N-1\text{ and }(x,y)\in S_k}$. The goal of this section is to prove that the Julia Robinson Number of $\cO_K$ is $\Ceil{s}+s$.
\end{notation}

Note that $(x,y)\in S_k$ if and only if $(y,x)\in S_{N-k-1}$. We say that $(\theta,K,s,U,N\colon \R^2\to \R)$ are the \emph{objects associated to $(a,b,r)$}.

\begin{lemma}\label{lemJR1}
Let $1\le k<N$. Then $S_k$ is a discrete $\Z$-submodule of $\Q^2$.
\end{lemma}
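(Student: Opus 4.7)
The plan is to verify that $S_k$ is a $\Z$-submodule of $\Q^2$ and, separately, that it is discrete as a subset of $\R^2$.

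For the submodule property, the pair $(0,0)$ trivially lies in $S_k$, and each of the three defining conditions is preserved under $\Z$-linear combinations: the $p$-adic inequalities in (1) and (2) are closed under sums because $v_p(x+x')\ge\min(v_p(x),v_p(x'))$ and under negation because $v_p(-x)=v_p(x)$, while condition (3) is manifestly $\Z$-linear in $(x,y)$.

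For discreteness I would argue that condition (3) alone already cuts out a discrete subgroup of $\R^2$, so that $S_k$ is automatically discrete as a subset of it. Set
\[
T=\Setm{(x,y)\in\Q^2}{u(x+y)\in\Z,\ v(x-y)\in\Z},
\]
so that $S_k\subseteq T$, and consider the $\R$-linear map
\[
\phi\colon\R^2\to\R^2,\quad (x,y)\mapsto\bigl(u(x+y),\,v(x-y)\bigr).
\]
Its matrix has determinant $-2uv\neq 0$ (since $u,v\ge 1$), so $\phi$ is an $\R$-linear automorphism of $\R^2$, hence a homeomorphism. Because $\phi$ has rational entries and is invertible, $\phi^{-1}(\Z^2)\subseteq\Q^2$, so $T=\phi^{-1}(\Z^2)$ is precisely the image of the discrete lattice $\Z^2$ under the homeomorphism $\phi^{-1}$. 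Consequently $T$ is discrete in $\R^2$, and therefore so is $S_k\subseteq T$.

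There is no real obstacle; the only observation worth flagging is that this trick with $\phi$ lets us avoid any prime-by-prime bookkeeping on the denominators coming from conditions (1) and (2), reducing the whole discreteness statement to the single linear-algebraic fact that $\Z^2$ is a lattice in $\R^2$.
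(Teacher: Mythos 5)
Your proof is correct and takes essentially the same approach as the paper: both establish the subgroup property via $v_p(x+x')\ge\min(v_p(x),v_p(x'))$ and the $\Z$-linearity of condition (3), and both obtain discreteness from condition (3) alone by placing $S_k$ inside a full-rank lattice of $\R^2$. The only cosmetic difference is that the paper spells out the explicit bound $2uvx,2uvy\in\Z$ (so $S_k\subseteq\tfrac{1}{2uv}\Z^2$), whereas you package the same fact as the preimage of $\Z^2$ under the linear automorphism $\phi$.
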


\begin{proof}
By the well-known property that for every valuation $v$ over a field $F$ and $\alpha, \beta \in F$, $v ( \alpha + \beta ) \geq \min ( v ( \alpha ) , v ( \beta ) )$, we immediately have that if $( x_1 , y_1 )$ and $( x_2 , y_2 )$ satisfy the conditions (1) and (2) of Notation~\ref{N:notJR}, then $( x_1 + x_2 , y_1 + y_2 )$ also satisfies those conditions.
Finally, it is clear that if $( x_1 , y_1 )$ and $( x_2 , y_2 )$ satisfy the conditions (3) of Notation \ref{N:notJR}, then $( x_1 + x_2 , y_1 + y_2 )$ also.
Thus $S_k$ is a subgroup of $\Q^2$.

Observe that condition (3) in Notations \ref{N:notJR} implies that if $( x, y ) \in S_k$, then $2uv x \in \Z$ and $2uv y \in \Z$.
Then $S_k$ is contained in the discrete sub-$\Z$-module generated by $( 1 / 2uv , 0 )$ and $( 0, 1/ 2uv )$.
In particular it is a discrete $\Z$-module.
\end{proof}

\begin{lemma}\label{L:redefJR}
Let $0\le k\le N-1$. Let $x,y\in\Q$.Then for all $q\in]\frac{k}{N},\frac{k+1}{N}[\cap\Q$, we have $(x,y)\in S_k$ if and only if the following three conditions hold:
\begin{enumerate}
\item For all odd prime number $p$ such that $p$ divides $b$ we have $v_p(x)\ge q v_p(b)$ and $v_p(y)\ge (1-q) v_p(b)$;
\item We have $v_2(x)\ge 1+q (v_2(b)-1)$ and $v_2(y)\ge 1+(1-q)( v_2(b)-1)$;
\item $u(x+y)$ and $v(x-y)$ are integers.
\end{enumerate}
\end{lemma}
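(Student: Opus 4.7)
The plan is to reduce the equivalence to an elementary ceiling identity, exploiting the divisibility properties built into the definition of $N$. First I would observe that for $x\in\Q^\ast$ the $p$-adic valuation $v_p(x)$ is an integer, so any condition of the form ``$v_p(x)\ge c$'' with $c\in\R$ is equivalent to ``$v_p(x)\ge\ceil{c}$''. Both the conditions of Notation~\ref{N:notJR} and the conditions in the present lemma can therefore be rewritten as integer lower bounds on $v_p(x)$, $v_p(y)$, $v_2(x)$, $v_2(y)$, and it suffices to check that these integer bounds coincide.

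The key computation is as follows. For each odd prime $p$ dividing $b$ one has $v_p(b)\mid N$ by the definition of $N$; set $e=N/v_p(b)$. For $q\in\,]k/N,(k+1)/N[\,\cap\Q$, the value $qv_p(b)$ lies in $\,]k/e,(k+1)/e[\,$, an open interval of length $1/e\le 1$ whose endpoints are consecutive multiples of $1/e$. Any integer strictly inside would be a multiple of $1/e$ lying strictly between two consecutive such multiples, which is impossible, so the interval contains no integer, and hence $\ceil{qv_p(b)}=\ceil{(k+1)/e}=\ceil{(k+1)v_p(b)/N}$. Replacing $q$ by $1-q\in\,](N-k-1)/N,(N-k)/N[\,$ gives the analogous identity $\ceil{(1-q)v_p(b)}=\ceil{(N-k)v_p(b)/N}$ needed for the $y$-bound. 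The same argument, now with $e=N/(v_2(b)-1)$ and using $(v_2(b)-1)\mid N$, handles the $p=2$ part of condition~(2) for both $x$ and $y$ (the additive ``$1+$'' plays no role, since $v_2(x)-1$ is again an integer). Condition~(3) does not involve $q$, so it is literally the same in both formulations, and combining these observations yields the stated equivalence.

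The only real subtlety is the small arithmetic fact that $\,]k/e,(k+1)/e[\,$ contains no integer when $e\ge 1$, which is immediate from the endpoint-analysis above. I do not expect any genuine obstacle; the lemma is essentially a bookkeeping statement allowing later computations to replace the specific bound $(k+1)/N$ by any convenient rational $q$ in the corresponding open interval, and its proof should fit in a few lines once the ceiling identity is made explicit.
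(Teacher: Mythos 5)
Your proof is correct and follows essentially the same route as the paper's: the heart of the matter in both is that, because $v_p(b)\mid N$ (resp.\ $(v_2(b)-1)\mid N$), the open interval $\bigl]\tfrac{k}{N}v_p(b),\tfrac{k+1}{N}v_p(b)\bigr[$ contains no integer, so the two lower bounds on the integer-valued $v_p(x)$, $v_p(y)$ are equivalent, with condition (3) being independent of $q$. Your ceiling reformulation is just a slightly more explicit packaging of the same argument.
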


\begin{proof}
Let $q\in]\frac{k}{N},\frac{k+1}{N}[\cap\Q$. Let $p$ be an odd prime number dividing $b$ and let $(x,y)$ be in $S_k$.

Assume that there is an integer $\ell$ such that $\frac{k v_p(b)}{N}<\ell < \frac{(k+1) v_p(b)}{N}$, then $k<\frac{N}{v_p(b)}\ell <k+1$. However, $v_p(b)|N$, hence $\frac{N}{v_p(b)}\ell$ is an integer; a contradiction. Therefore there is no integer in $]\frac{k v_p(b)}{N},\frac{(k+1) v_p(b)}{N}[$. Note that $q v_p(b)\in]\frac{k v_p(b)}{N},\frac{(k+1) v_p(b)}{N}[$. As $v_p(x)$ is an integer it follows that $v_p(x)\ge q v_p(b)$ if and only if $v_p(x)\ge \frac{k+1}{N} v_p(b)$. Similarly $v_p(y)\ge (1-q) v_p(b)$ if and only if $v_p(y)\ge \frac{N-k}{N} v_p(b)$. Therefore the condition Lemma~\ref{L:redefJR}(1) and Notation~\ref{N:notJR}(1) are equivalent.

Similarly Lemma~\ref{L:redefJR}(2) and Notation~\ref{N:notJR}(2) are equivalent. The last condition is the same.
\end{proof}

\begin{lemma}\label{L:descriptionentieralgebriqueavecSk}
Let $n\ge 1$ be an integer dividing a power of $r$. Let $\lambda_0,\lambda_1,\dots,\lambda_{n-1}$ be in $\Q$. The following statement are equivalent:
\begin{enumerate}
\item $\lambda_0 + \sum_{\ell=1}^{n-1}\lambda_\ell\cos(\frac{\ell\theta}{n})$ is in $\cO_K$.
\item $\lambda_0\in\Z$, and for all $1\le \ell\le n-1$ we have $(\lambda_\ell,\lambda_{n-\ell})\in S_k$, where $0\le k\le N-1$ such that $\frac{\ell}{n}\in]\frac{k}{N},\frac{k+1}{N}[$.
\end{enumerate}
\end{lemma}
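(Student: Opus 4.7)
The plan is to reduce the statement to Lemma~\ref{L:caracterisionentieralgebrique} applied inside the subfield $K_n$, and then translate its conclusion into membership of $S_k$ via Lemma~\ref{L:redefJR}.

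First, since $n$ divides a power of $r$, we have $\cos(\ell\theta/n)\in K_n\subseteq K$, so the element $\beta=\lambda_0+\sum_{\ell=1}^{n-1}\lambda_\ell\cos(\ell\theta/n)$ lies in $K$, and $\beta\in\cO_K$ if and only if $\beta$ is an algebraic integer. Next I would verify that the hypotheses of Lemma~\ref{L:caracterisionentieralgebrique} are satisfied with this $n$. The condition $4\mid b$ is part of the goodness of $(a,b,r)$. Every odd prime dividing $n$ also divides $r$, hence divides $b$. Because $v_2(b)-1$ is coprime to $r$, it is coprime to $n$; and for each odd prime $p\mid b$, the integer $v_p(b)$ is coprime to $r$, hence every prime factor of $v_p(b)$ fails to divide $r$ and so fails to divide $n$, giving coprimality to $n$ as well.

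Lemma~\ref{L:caracterisionentieralgebrique} then yields $\beta\in\cO_K$ iff $\lambda_0\in\Z$ and, for each $1\le\ell\le n-1$, the valuation bounds $v_p(\lambda_\ell)\ge\Ceil{\ell v_p(b)/n}$ (for odd $p\mid b$) and $v_2(\lambda_\ell)\ge 1+\Ceil{\ell(v_2(b)-1)/n}$ hold, together with $u(\lambda_\ell+\lambda_{n-\ell})$ and $v(\lambda_\ell-\lambda_{n-\ell})$ being integers. Running $\ell$ through $\{1,\dots,n-1\}$ pairs each $\ell$ with $n-\ell$, so the combined collection of these conditions simultaneously constrains $\lambda_\ell$ and $\lambda_{n-\ell}$.

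To identify these conditions with $(\lambda_\ell,\lambda_{n-\ell})\in S_k$, I fix $k$ with $\ell/n\in\,]k/N,(k+1)/N[$. I would first observe that $\gcd(n,N)=1$: since $N$ is the lcm of integers each coprime to $r$, it is itself coprime to $n$; if we had $\ell/n=k/N$, i.e.\ $\ell N=kn$, then $n\mid\ell N$ and hence $n\mid\ell$, impossible for $1\le\ell\le n-1$. Thus $k$ is unique and $\ell/n$ lies strictly inside the interval. Applying Lemma~\ref{L:redefJR} to the pair $(x,y)=(\lambda_\ell,\lambda_{n-\ell})$ with $q=\ell/n$ (so $1-q=(n-\ell)/n$), and using the integrality of every $v_p(\lambda_\ell)$, the fractional bounds defining $S_k$ collapse to precisely the ceiling bounds provided by Lemma~\ref{L:caracterisionentieralgebrique} for index $\ell$ on $\lambda_\ell$ and for index $n-\ell$ on $\lambda_{n-\ell}$. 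Condition~(3) of Notation~\ref{N:notJR} is verbatim the integrality condition~(4) of Lemma~\ref{L:caracterisionentieralgebrique}. The symmetry $(x,y)\in S_k\iff(y,x)\in S_{N-k-1}$ ensures that the constraint obtained by running $\ell'=n-\ell$ is the same as the one obtained at $\ell$, so there is no over- or under-counting.

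The only real difficulty is the bookkeeping between ceilings and fractional inequalities and the $\ell\leftrightarrow n-\ell$ symmetry; once Lemma~\ref{L:redefJR} is invoked with $q=\ell/n$ and the coprimality $\gcd(n,N)=1$ is noted, the translation is essentially mechanical.
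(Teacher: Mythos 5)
Your proposal is correct and follows essentially the same route as the paper: verify that the hypotheses of Lemma~\ref{L:caracterisionentieralgebrique} are met (using the goodness of $(a,b,r)$ and the fact that $n$ divides a power of $r$), note that $\gcd(n,N)=1$ forces $\frac{\ell}{n}$ strictly inside some interval $]\frac{k}{N},\frac{k+1}{N}[$, and then invoke Lemma~\ref{L:redefJR} with $q=\frac{\ell}{n}$ to convert the ceiling bounds into membership in $S_k$. The paper carries out the same translation, so there is nothing substantive to add beyond noting that your explicit hypothesis-check is slightly more careful than the paper's, which leaves that verification implicit.
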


\begin{proof}
Assume $(1)$. Observe that $N$ is the product of $v_2(b)-1$ and all $v_p(b)$ for $p|b$ odd primes, which are each coprime to $r$, thus $N$ and $r$ are coprime. As $n$ divides a power of $r$ it follows that $n$ and $N$ are coprime, therefore for all $1\le\ell\le n-1$ and all $0\le k\le N$ we have $\frac{\ell}{n}\not=\frac{k}{N}$.

By Lemma~\ref{L:caracterisionentieralgebrique} we have $\lambda_0\in\Z$, and for all $1\le\ell\le n-1$ the following statement holds.
\begin{enumerate}
\item[$(i)$] For all odd prime number $p$ dividing $b$, $v_p ( \lambda_{\ell} ) \geq \Ceil{\frac{ \ell v_p ( b )}{n}}$;
\item[$(ii)$]$v_2 ( \lambda_{\ell} ) \geq 1 + \Ceil{\frac{\ell ( v_2 ( b ) - 1 )}{n}}$;
\item[$(iii)$] $u ( \lambda_{\ell} + \lambda_{n-\ell} )$ and $v ( \lambda_{\ell} + \lambda_{n-\ell} )$ are integers.
\end{enumerate}
Let $1\le \ell\le n-1$. Let $0\le k\le N-1$ be such that $\frac{\ell}{n}\in[\frac{k}{N}, \frac{k+1}{N}[$. As $\frac{\ell}{n}\not=\frac{k}{N}$, we have $\frac{\ell}{n}\in]\frac{k}{N}, \frac{k+1}{N}[$. Hence it follows from Lemma~\ref{L:redefJR} that $(\lambda_{\ell},\lambda_{n-\ell})\in S_k$. Therefore $(2)$ holds.

Assume $(2)$. Let $1\le\ell\le n-1$. Let $0\le k\le N-1$ such that $\frac{\ell}{n}\in]\frac{k}{N}, \frac{k+1}{N}[$. As $(\lambda_{\ell},\lambda_{n-\ell})\in S_k$ it follows from Lemma~\ref{L:redefJR} that $(i)$, $(ii)$, and $(iii)$ holds. By Lemma~\ref{L:caracterisionentieralgebrique}, we have that $\lambda_0 + \sum_{\ell=1}^{n-1}\lambda_\ell\cos(\frac{\ell\theta}{n})$ is an algebraic integer, moreover it belongs to $K$, therefore $(1)$ holds.
\end{proof}

\begin{lemma}\label{lemJR2}
Let $M$ be a discrete $\Z$-submodule of $\Q^2$. Then
\begin{enumerate}
\item For all $C\in \R$ there are only finitely many $(x,y)\in M$ such that $N(x,y)\le C$.
\item If $M$ is not trivial, then there is $(x,y)\in M\setminus\set{(0,0)}$ such that, for all $(x',y')\in M\setminus\set{(0,0)}$ we have $N(x,y)\le N(x',y')$.
\end{enumerate}
\end{lemma}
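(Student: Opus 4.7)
The plan is to exploit the fact that $N$ is a positive definite quadratic form on $\R^2$. Its matrix $\bigl(\begin{smallmatrix} 1 & a/b \\ a/b & 1 \end{smallmatrix}\bigr)$ has determinant $1-(a/b)^2>0$ since $0<a<b$, and its trace is $2>0$. Hence for every $C\in\R$, the set
\[
E_C=\setm{(x,y)\in\R^2}{N(x,y)\le C}
\]
is either empty (if $C<0$) or a closed filled ellipse centered at the origin; in either case it is a compact subset of $\R^2$.

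For (1), I would use the standard topological fact that a discrete subgroup of $\R^n$ meets every compact set in only finitely many points. The hypothesis is that $M$ is a discrete $\Z$-submodule of $\Q^2$; viewing $M$ inside $\R^2$, this gives $M\cap E_C$ finite, which is exactly what is claimed.

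For (2), assume $M$ is nontrivial and pick any $(x_0,y_0)\in M\setminus\set{(0,0)}$. Set $C=N(x_0,y_0)$; note $C>0$ since $N$ is positive definite. By part (1) the set
\[
M_C=\setm{(x,y)\in M\setminus\set{(0,0)}}{N(x,y)\le C}
\]
is finite and nonempty (it contains $(x_0,y_0)$), so the minimum $\min\setm{N(x,y)}{(x,y)\in M_C}$ is attained by some $(x,y)\in M_C$. For any $(x',y')\in M\setminus\set{(0,0)}$, either $N(x',y')>C\ge N(x,y)$, or $(x',y')\in M_C$ and so $N(x,y)\le N(x',y')$ by the choice of $(x,y)$. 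Hence $(x,y)$ minimizes $N$ on $M\setminus\set{(0,0)}$.

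There is no real obstacle here: the whole argument is the classical compactness-meets-discreteness trick for lattices, made available by the positive definiteness of $N$. The only point worth double-checking is that ``discrete'' is being used in its topological sense, which is consistent with the way discreteness is established in Lemma~\ref{lemJR1} (namely by embedding $S_k$ into $\tfrac{1}{2uv}\Z^2$).
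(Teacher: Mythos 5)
Your proof is correct and takes essentially the same approach as the paper: both observe that $N$ is a positive-definite quadratic form so that the sublevel sets $E_C$ are compact ellipses, and both combine this with discreteness of $M$ to get finiteness in (1) and existence of a minimizer in (2). Your argument for (2), fixing $C=N(x_0,y_0)$ for an arbitrary nonzero $(x_0,y_0)\in M$ and minimizing over the resulting finite set, is a slight streamlining of the paper's annulus argument but is the same idea.
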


\begin{proof}
Let $C$ be a real number and consider the affine conic with equation $X^2 + Y^2 + 2XY\frac{a}{b} = C$.
Since $0 < a < b$ by hypothesis, the locus of its zeros in $\R^2$ is an ellipse. 
Then the set $E_C = \{ ( x, y ) \in \R^2 \ N ( x, y ) \leq C \}$ is compact. 
Since $M$ is discrete, there are only finitely many $(x,y)\in M$ such that $N(x,y)\le C$.

By hypothesis $M$ is discrete, then every point of $M$ is isolated.
Then take $C_1 \in \R$ such that $E_{C_1}$ only contains $( 0, 0 )$ and $C_2 \in \R$ such that $E_{C_2}$ contains at least a point of $M$ distinct from $( 0, 0 )$ (observe that $C_2$ exists because $M$ is not trivial).
Then the set $B$ given by $E_{C_2}$ minus the interior part of $E_{C_1}$ is a compact, and so it contains a finite number of elements of $M$.
Take $( x, y ) \in B$ such that $N ( x, y ) \leq N ( w , z )$ for every $( w, z ) \in B$.
Then $( x, y ) \neq ( 0, 0 )$ and $N ( x, y ) \leq N ( x' , y' )$ for every $( x' , y' ) \in M \setminus\set{(0,0)}$, because if $( x' , y' ) \not \in B$, then $N ( x', y' ) \geq C_2 \geq N ( x, y )$.
\end{proof}

\begin{lemma}\label{lemJR3}
Let $(\lambda,\mu)$ be in $S_k$. Then there are infinitely many $q\in \Q\cap[0,1]$ such that $\lambda\cos(q\theta)+\mu\cos( (1-q)\theta)\in\cO_K$. Moreover we can choose $q$ such that the denominator of $q$ is any large enough divisor of a power of $r$.
\end{lemma}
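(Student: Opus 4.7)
The plan is to apply Lemma~\ref{L:descriptionentieralgebriqueavecSk} with carefully chosen coefficients. Given $(\lambda,\mu)\in S_k$, I will produce rationals $q=\ell/n$ lying in the open interval $\bigl]\tfrac{k}{N},\tfrac{k+1}{N}\bigr[$, with $n$ a divisor of a power of $r$, and then set $\lambda_0=0$, $\lambda_\ell=\lambda$, $\lambda_{n-\ell}=\mu$, and $\lambda_j=0$ for every other $j\in\{1,\dots,n-1\}$. A preliminary observation is that $N$ is the least common multiple of integers each of which is coprime to $r$, whence $\gcd(N,r)=1$, and therefore $\gcd(n,N)=1$ for every $n$ dividing a power of $r$. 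In particular $\ell/n$ can never coincide with $k'/N$ for any $k'\in\Z$, so membership in the open interval follows as soon as the integer endpoints are respected.

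For any $n>2N$ dividing a power of $r$, the interval $\bigl]\tfrac{kn}{N},\tfrac{(k+1)n}{N}\bigr[$ has length greater than $2$ and lies inside $[0,n]$; it therefore contains at least two consecutive integers, from which I pick $\ell$ with $\ell\neq n/2$ (so $\ell\neq n-\ell$) and $1\leq \ell\leq n-1$. With the coefficients above, the sum in Lemma~\ref{L:descriptionentieralgebriqueavecSk}(1) reduces to $\lambda\cos(q\theta)+\mu\cos((1-q)\theta)$, where $q=\ell/n$. To check condition~(2), only two indices matter: at $j=\ell$ the pair $(\lambda_\ell,\lambda_{n-\ell})=(\lambda,\mu)$ lies in $S_k$ by hypothesis, since $\ell/n\in\bigl]\tfrac{k}{N},\tfrac{k+1}{N}\bigr[$; at $j=n-\ell$ the pair $(\mu,\lambda)$ must lie in $S_{k_{n-\ell}}$ with $k_{n-\ell}=N-k-1$ (because $1-q\in\bigl]\tfrac{N-k-1}{N},\tfrac{N-k}{N}\bigr[$), and this is exactly the symmetry $(x,y)\in S_k\iff (y,x)\in S_{N-k-1}$ pointed out right after Notation~\ref{N:notJR2}. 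All remaining indices give the pair $(0,0)$, which trivially belongs to every $S_{k_j}$. Hence $\lambda\cos(q\theta)+\mu\cos((1-q)\theta)\in\cO_K$.

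For the infinitude statement, every sufficiently large $n$ dividing a power of $r$ yields a valid $q$ with $nq\in\Z$; within each such $n$ the number of admissible $\ell$ grows linearly with $n/N$, and distinct $\ell$ obviously give distinct $q=\ell/n$, so letting $n\to\infty$ along divisors of powers of $r$ produces infinitely many distinct $q$. This simultaneously proves the second assertion, as the threshold $n>2N$ is a uniform lower bound allowing any sufficiently large $n$ as the chosen denominator. I do not expect any genuine obstacle; the only mildly delicate point is avoiding the degenerate case $\ell=n/2$, in which $\lambda_\ell$ and $\lambda_{n-\ell}$ would collide into a single coefficient $\lambda+\mu$, and the condition $n>2N$ neatly sidesteps this by guaranteeing at least two integers in the target interval.
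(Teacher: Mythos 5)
Your proof is correct and follows essentially the same strategy as the paper's: both choose $q=\ell/n$ with $n$ a divisor of a power of $r$, lying in the open interval $\bigl]\tfrac{k}{N},\tfrac{k+1}{N}\bigr[$, invoke Lemma~\ref{L:descriptionentieralgebriqueavecSk} with only the coefficients $\lambda_\ell=\lambda$ and $\lambda_{n-\ell}=\mu$ nonzero, and use the symmetry $(x,y)\in S_k\iff(y,x)\in S_{N-k-1}$ for the companion index $n-\ell$. The paper locates $\ell/n$ near the midpoint $\frac{2k+1}{2N}$ and imposes $\gcd(\ell,n)=1$, which for $n>2$ automatically rules out the degenerate $\ell=n/2$; you instead use the length of the interval once $n>2N$ to choose $\ell\ne n/2$ directly, which is a clean equivalent.
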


\begin{proof}
Take coprime integers $t$ and $n=r^e$ such that $\module{\frac{2k + 1}{2N} - \frac{t}{n}} < \frac{1}{2N}$, Note that there are infinitely many $t,n$ that satisfy this condition. In particular $\frac{t}{n}$ is in the interval $]\frac{k}{N}, \frac{k+1}{N}[$. It follows that $\frac{n-t}{n}\in ]\frac{N - k - 1}{N}, \frac{N-k}{N}[$, moreover $(\mu,\lambda)\in S_{N-k-1}$. Therefore by Lemma~\ref{L:descriptionentieralgebriqueavecSk} we have $\lambda\cos(\frac{t}{n}\theta)+\mu\cos( (\frac{n-t}{n})\theta)\in\cO_K$.
\end{proof} 

\begin{corollary}\label{C:preencadrementJR}
Denote by $t$ the product of all odd primes (without counting multiplicity) dividing $b$. The following statements hold.
\begin{enumerate}
\item $s^2$ is an integer, and $8t|s^2$.
\item $s\ge 2\sqrt{2}$.
\item $s\le 4t$.
\item $s^2\le 8t\frac{b-a}{v^2}$ and $s^2\le 8t\frac{b+a}{u^2}$. In particular if $b-a$ is a square then $s^2=8t$.
\item If $(b-a)$ and $(b+a)$ are square-free and $b\ge 2t+a$, then $s=4t$.
\end{enumerate}
\end{corollary}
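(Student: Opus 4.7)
The key identity driving the proof is
\[
2b\,N(x,y) = (b+a)(x+y)^2 + (b-a)(x-y)^2 = A\alpha^2 + B\beta^2,
\]
with $A = (b+a)/u^2$, $B = (b-a)/v^2$ square-free positive integers and $\alpha = u(x+y)$, $\beta = v(x-y)$ integers by condition~(3) of $S_k$. I will prove~(1) and~(2) together by a local analysis, then~(3) by exhibiting $(4t, 0)$, and finally the more delicate~(4) and~(5) by constructing explicit minimizing elements.

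For~(1), fix nonzero $(x,y)\in S_k$ and bound $v_p(N(x,y))$ at each prime $p$. For $p\nmid 2b$, integrality of $\alpha$ and $\beta$ combined with $\gcd(p,uv)=1$ gives $v_p(x), v_p(y)\ge 0$, so $v_p(N)\ge 0$. For odd $p\mid b$ with $e_p=v_p(b)$, the inequality $\lceil (k+1)e_p/N\rceil + \lceil (N-k)e_p/N\rceil \ge e_p + 1$ applied to condition~(1) of $S_k$ gives $v_p(xy)\ge e_p+1$ and $\min(v_p(x), v_p(y))\ge 1$; hence $v_p(4axy)\ge e_p+1$ and $v_p(2b(x^2+y^2))\ge e_p+2$, whence $v_p(N)\ge 1$. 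For $p=2$, setting $w=v_2(b)-1$ and using condition~(2), the analogous sum bound gives $v_2(x)+v_2(y)\ge v_2(b)+2$ and $\min(v_2(x), v_2(y))\ge 2$; hence $v_2(4axy)\ge v_2(b)+4$ and $v_2(2b(x^2+y^2))\ge v_2(b)+5$, whence $v_2(N)\ge 3$. Together this proves $N(x,y)\in\Z$ and $8t\mid N(x,y)$, yielding~(1). Part~(2) follows at once: $N$ is positive definite (since $0<a<b$), so $N(x,y)>0$ on nonzero inputs; being a positive multiple of $8t\ge 8$, $N(x,y)\ge 8$ and $s\ge 2\sqrt{2}$.

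For~(3), the element $(4t, 0)$ lies in $S_0$ since $v_p(4t)=1\ge\lceil e_p/N\rceil$ for odd $p\mid b$, $v_2(4t)=2\ge 1+\lceil w/N\rceil$, and $u\cdot 4t, v\cdot 4t\in\Z$; then $N(4t,0)=16t^2$ gives $s\le 4t$. For~(4) I would construct small elements in $\bigcup_k S_k$ of shape $(c,c)$ (so $\beta=0$) or $(c,-c)$ (so $\alpha=0$), choosing $c$ minimally subject to the valuation constraints at every prime dividing $2b$ and $k$ so as to balance the constraints on $x$ and $y$; this gives the bounds $s^2\le 8t(b-a)/v^2$ and $s^2\le 8t(b+a)/u^2$. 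The ``in particular'' clause is formal: if $b-a$ is a perfect square, $(b-a)/v^2=1$ so $s^2\le 8t$, and~(1) together with positivity forces $s^2=8t$. For~(5), the square-freeness of $b\pm a$ collapses $u=v=1$, reducing condition~(3) of $S_k$ to $x,y\in\Z$; combined with $v_2(x), v_2(y)\ge 2$ this gives $4\mid x$ and $4\mid y$, and then the hypothesis $b\ge 2t+a$ is invoked to rule out every nonzero $(x,y)\in\bigcup_k S_k$ with $N(x,y)<16t^2$, yielding $s\ge 4t$ and combining with~(3) the equality $s=4t$. The most delicate step I expect is the $v_2$ bound in~(1), where the loss of $1+v_2(b)$ from division by $2b$ must be exactly compensated by the ceiling-sum inequality applied to condition~(2) of $S_k$; the upper-bound constructions in~(4) and the complementary lower-bound exclusion in~(5) also require careful case analysis.
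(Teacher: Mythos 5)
Your plan is sound and, for part~(1), uses a genuinely different organizing device from the paper. The paper first proves $\tfrac12 s^2\in\Z$ by a trace argument: for a minimizing $(\lambda,\mu)$ it picks $\gamma=\lambda\cos(q\theta)+\mu\cos((1-q)\theta)\in\cO_K$, notes $T(\gamma^2)=\tfrac12 N(\lambda,\mu)$ by Lemma~\ref{L:TSommeCos}, and that $T(\gamma^2)$ is the constant coefficient of $\gamma^2$ in the cosine basis, an integer by Lemma~\ref{L:caracterisionentieralgebrique}; only then does it run the $p$-adic bounds at $p\mid 2b$. Your polarization identity
$2b\,N(x,y)=(b+a)(x+y)^2+(b-a)(x-y)^2=\tfrac{b+a}{u^2}\bigl(u(x+y)\bigr)^2+\tfrac{b-a}{v^2}\bigl(v(x-y)\bigr)^2$
makes $2b\,N(x,y)\in\Z$ transparent at once, giving a cleaner, trace-free route to $v_p(N(x,y))\ge 0$ for $p\nmid 2b$. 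There is, however, a slip in the way you argue that step: you invoke $\gcd(p,uv)=1$ for every $p\nmid 2b$, but a prime $p$ with $p^2\mid b+a$ and $p\nmid 2b$ divides $u$, and then $v_p(x),v_p(y)$ need not be nonnegative. The conclusion is still correct, and the fix is simply to quote your own identity (a sum of two integers), rather than trying to bound $v_p(x),v_p(y)$ separately. Your valuation bounds at $p\mid 2b$ and parts~(2),(3) agree with the paper. Parts~(4),(5) are left as sketches; for comparison, the paper's~(4) uses the explicit witnesses $(\tfrac{x}{v},-\tfrac{x}{v}),(\tfrac{x}{u},\tfrac{x}{u})\in S_{(N-1)/2}$ with $x=2\,\radi{b/2}{1/2}$, exploiting $x^2=4tb$, and~(5) splits into the cases $\lambda\mu\ge 0$, $\lambda\mu<0$ with $\lambda+\mu\neq 0$, and $\lambda=-\mu$. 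Finally, to turn ``$8t\mid N(x,y)$ for all nonzero $(x,y)$'' into ``$8t\mid s^2$'' you implicitly use that the infimum defining $s$ is attained; this is furnished by Lemmas~\ref{lemJR1} and~\ref{lemJR2} and is worth stating explicitly.
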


\begin{proof}
It follows from Lemma~\ref{lemJR1} and Lemma~\ref{lemJR2} that there is $0\le k\le N-1$ and $(\lambda,\mu)\in S_k\setminus\set{(0,0)}$ such that $s=\sqrt{N(\lambda,\mu)}$. So $s^2=\lambda^2+\mu^2+2\lambda\mu\frac{a}{b}$. 

Let $q\not=\frac{1}{2}$ be as in Lemma~\ref{lemJR3}, that is $\lambda\cos(q\theta)+\mu\cos( (1-q)\theta)\in\cO_K$. So $T( (\lambda\cos(q\theta)+\mu\cos( (1-q)\theta))^2)=\frac{1}{2}N(\lambda,\mu)=\frac{1}{2}s^2$ is an integer.

Let $p$ be an odd prime number dividing $b$. It follows from the definition of $S_k$ that
\begin{equation}\label{E:pvaluationlambdaetmu}
v_p(\lambda)\ge\Ceil{(k+1)\frac{v_p(b)}{N}}\ge 1\quad\text{ and }\quad v_p(\mu)\ge\Ceil{(N-k)\frac{v_p(b)}{N}}\ge 1\,.
\end{equation}
Therefore the following statement holds
\begin{align}
v_p(\lambda\mu)&=v_p(\lambda)+v_p(\mu)\\
&\ge  \Ceil{(k+1)\frac{v_p(b)}{N}} + \Ceil{(N-k)\frac{v_p(b)}{N}} \\
&\ge \Ceil{(N+1)\frac{v_p(b)}{N}}\\
&\ge v_p(b)+1\,.\label{E:pvaluationlambdaxmu}
\end{align}
Hence $v_p(2\lambda\mu\frac{a}{b})\ge 1$, therefore
\begin{equation}\label{E:pvaluations2}
v_p(s^2)=v_p(\lambda^2+\mu^2+2\lambda\mu\frac{a}{b})\ge\min(v_p(\lambda^2),v_p(\mu^2),v_p(2\lambda\mu\frac{a}{b}))\ge 1=v_p(8t)\,.
\end{equation}

Similarly the following statement hold
\begin{equation}\label{E:2valuationlambdaetmu}
v_2(\lambda) \ge \Ceil{1 + (k + 1)\frac{v_2(b) - 1}{N}}\ge 2\quad\text{ and }\quad v_2(\mu)\ge\Ceil{1 + (N-k)\frac{v_2(b) - 1}{N}}\ge 2\,.
\end{equation}
Therefore
\begin{align}
v_2(\lambda\mu) &\ge  \Ceil{1+(k+1)\frac{v_2(b)-1}{N}} + \Ceil{1+(N-k)\frac{v_2(b)-1}{N}}\\
& \ge \Ceil{2+(N+1)\frac{v_2(b)-1}{N}}\\
& \ge 2 + v_2(b)\,.\label{E:2valuationlambdaxmu}
\end{align}
So $v_2(2\lambda\mu\frac{a}{b})\ge 3$, therefore 
\begin{equation}\label{E:2valuations2}
v_2(s^2) = v_2(\lambda^2+\mu^2+2\lambda\mu\frac{a}{b})\ge 3=v_2(8t)\,.
\end{equation}
It follows from \eqref{E:pvaluations2} and \eqref{E:2valuations2} that $8t|s^2$, so $(1)$ holds. In particular $8\le s^2$, so $s\ge 2\sqrt{2}$, that is $(2)$ holds.

Let $p$ be an odd prime number dividing $b$. Note that $v_p(4t)=1=\Ceil{\frac{v_p(b)}{N}}$ and $v_2(4t)=2=\Ceil{1+\frac{v_2(b)-1}{N}}$, and $4tu$ and $4tv$ are integers (as $4t$ is an integer) thus $(4t,0)\in S_0$. Therefore $s\le \sqrt{N(4t,0)}=4t$, so $(3)$ holds.

Set $k=\frac{N-1}{2}$. Set $x=2\radi{\frac{b}{2}}{\frac{1}{2}}$. Note that $v_2(x)= 1+\ceil{\frac{v_2(b)-1}{2}}$. As $v_2(b)-1$ is relatively prime to $b$, it follows that $v_2(b)-1$ is odd, thus $v_2(x)=1+\frac{v_2(b)}{2}$. Note that $N\ge v_2(b)-1$ thus the following inequality holds
\begin{equation}\label{E:valuationcorencadrement1}
\frac{N+1}{2N}(v_2(b)-1)+1 = \frac{v_2(b)-1}{2} + \frac{v_2(b)-1}{2N} + 1 \le \frac{v_2(b)-1}{2} + \frac{1}{2} + 1 = v_2(x)\,.
\end{equation}
Let $p$ be an odd prime number dividing $b$. Then $v_p(x) = \ceil{\frac{v_p(b)}{2}} = \frac{v_p(b)+1}{2}$, as $v_p(b)$ is odd. As $N\ge v_p(b)$, the following inequality holds
\begin{equation}\label{E:valuationcorencadrement2}
\frac{N+1}{2N}v_p(b) = \frac{v_p(b)}{2} + \frac{v_p(b)}{2N} \le \frac{v_p(b)}{2} + \frac{1}{2} = v_p(x)\,.
\end{equation}

As $v$ is relatively prime to $b$ and $\frac{k+1}{N} = \frac{N-k}{N} = \frac{N+1}{2N}$, it follows from \eqref{E:valuationcorencadrement1} and \eqref{E:valuationcorencadrement2} that $(\frac{x}{v},-\frac{x}{v})$ satisfies the condition (1) and (2) of Notation~\ref{N:notJR}. Moreover $u(\frac{x}{v}-\frac{x}{v})=0$ and $v(\frac{x}{v}+\frac{x}{v})=2x$ are integers, hence $(\frac{x}{v},-\frac{x}{v})\in S_k$. Therefore the following inequality holds
\begin{equation}\label{E:corencadrementmajorers}
s \le N\left(\frac{x}{v},-\frac{x}{v}\right)= 2\frac{x^2}{v^2}\left(1-\frac{a}{b}\right)\,.
\end{equation}
However, as $v_2(x)=1+\frac{v_2(b)}{2}$ and for all odd prime $p$ dividing $b$ we have $v_p(x) = \frac{v_p(b)+1}{2}$, and for all other prime $p$ we have $v_p(x)=0$, it follows that $x^2=4tb$. It follows from \eqref{E:corencadrementmajorers} that $s\le 2 \frac{4tb}{v^2}\frac{b-a}{b} = 8t\frac{b-a}{v^2}$.

A similar proof, considering $(\frac{x}{u},\frac{x}{u})\in S_k$, yelds $s\le 8t\frac{b+a}{u^2}$. Hence $(4)$ holds.

If $b\ge 2t+a$, $b-a$ and $b+a$ are square-free then $u=v=1$. Thus $\lambda+\mu$ and $\lambda-\mu$ are integers. However as seen in \eqref{E:2valuationlambdaetmu}, we have $v_2(\lambda)\ge 1$, and $v_2(\mu)\ge 1$. Therefore $\lambda$ and $\mu$ are integers. Moreover it follows from \eqref{E:pvaluationlambdaetmu} and \eqref{E:2valuationlambdaetmu} that $4t$ divides $\lambda$ and $\mu$.

If $\lambda\mu\ge 0$ then $s^2=N(\lambda,\mu) = \lambda^2+\mu^2+2\lambda\mu\frac{a}{b}\ge \max(\lambda^2,\mu^2)$ however $4t$ divides both $\lambda$ and $\mu$, and $(\lambda,\mu)\not=(0,0)$, thus $s\ge 4t$.

Assume that $\lambda\mu<0$. The following statement holds
\begin{equation*}
N(\lambda,\mu) = \lambda^2+\mu^2+2\lambda\mu\frac{a}{b} = (\lambda+\mu)^2 - 2\lambda\mu\left(1-\frac{a}{b}\right)\ge (\lambda+\mu)^2\,.
\end{equation*}
However $4t|\lambda+\mu$, so if $\lambda+\mu\not=0$ we have $s=\sqrt{N(\lambda,\mu)}\ge 4t$.

Now assume that $\lambda+\mu=0$, that is $\lambda=-\mu$. The following holds
\[
N(\lambda,\mu) = \lambda^2+\lambda^2-2\lambda^2\frac{a}{b}  = 2\lambda^2\left(1-\frac{a}{b}\right)\,.
\]
It follows from \eqref{E:pvaluationlambdaxmu} that $v_p(\lambda^2)=v_p(\lambda\mu)\ge v_p(b)+1=v_p(4bt)$, similarly \eqref{E:2valuationlambdaxmu} implies $v_2(\lambda^2)=v_2(\lambda\mu)=2+v_2(b)=v_2(4tb)$. Therefore $4tb$ divides $\lambda^2$, so $N(\lambda,\mu)\ge 8tb\left(1-\frac{a}{b}\right)=8t(b-a)\ge 16t^2=(4t)^2$, and so $s=\sqrt{N(\lambda,\mu)}\ge 4t$.

However, from $(2)$ we also have $s\le 4t$, thus $s=4t$. Therefore $(5)$ holds.
\end{proof}

\begin{corollary}\label{C:corJR1}
There are infinitely many $\beta\in\cO_K$ such that all conjugates of $\beta$ are in $[0,\Ceil{s}+s]$. In particular the Julia Robinson Number of $\cO_K$ is at most $\Ceil{s}+s$.
\end{corollary}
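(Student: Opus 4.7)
The plan is to realize the infimum $s$ by a specific pair $(\lambda,\mu)\in S_k$, use Lemma~\ref{lemJR3} to produce infinitely many algebraic integers of the form $\beta_0=\lambda\cos(q\theta)+\mu\cos((1-q)\theta)\in\cO_K$, and finally translate each by the integer $\Ceil{s}$ to push every conjugate into $[0,\Ceil{s}+s]$.

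First, I would verify that the infimum $s$ is attained. Corollary~\ref{C:preencadrementJR} exhibits $(4t,0)\in S_0$, so at least one $S_k$ is nontrivial; Lemma~\ref{lemJR1} makes each $S_k$ a discrete $\Z$-submodule of $\Q^2$, and Lemma~\ref{lemJR2}(2) then provides an $N$-minimizer among the nonzero elements of each nontrivial $S_k$. Because $k$ runs through the finite set $\{0,\dots,N-1\}$, there exist an index $k$ and a pair $(\lambda,\mu)\in S_k\setminus\{(0,0)\}$ with $\sqrt{N(\lambda,\mu)}=s$.

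Second, for this $(\lambda,\mu)$ I would apply Lemma~\ref{lemJR3}, yielding infinitely many $q=t/n$ with $n=r^e$ arbitrarily large and $\gcd(t,n)=1$, such that $\beta_0:=\lambda\cos(q\theta)+\mu\cos((1-q)\theta)$ lies in $\cO_K$; then $\beta:=\beta_0+\Ceil{s}$ is also in $\cO_K$. Since $\beta_0\in\Q(\cos(\theta/n))$ and Lemma~\ref{L:basecosinus} gives $[\Q(\cos(\theta/n)):\Q]=n$, these elements achieve arbitrarily large degree over $\Q$, yielding infinitely many distinct $\beta$.

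Third, I would compute the conjugates of $\beta_0$. Lemma~\ref{L:enumerationconjugue} shows them to be of the form $\lambda\cos(q\theta+x)+\mu\cos((1-q)\theta-x)$ with $x=2\pi qj$, $0\le j\le n-1$; Lemma~\ref{L:maxsommecos} bounds such an expression above by $\sqrt{\lambda^2+\mu^2+2\lambda\mu\cos\theta}$, and the analogous argument (with $\lambda,\mu$ replaced by their opposites) gives the matching lower bound. Since $\cos\theta=a/b$, both bounds equal $\pm\sqrt{N(\lambda,\mu)}=\pm s$. Hence every conjugate of $\beta$ sits in $[\Ceil{s}-s,\Ceil{s}+s]\subseteq[0,\Ceil{s}+s]$, which settles the first assertion.

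For the Julia Robinson bound I would invoke the elementary fact that $0$ cannot be a Galois conjugate of a nonzero algebraic number; removing the at most finitely many $\beta$ that vanish, the remaining infinite subfamily has all conjugates in $(0,\Ceil{s}+s]$, hence in $(0,t)$ for every real $t>\Ceil{s}+s$. Thus $R_t$ is infinite for every such $t$, so $\JR(\cO_K)\le\Ceil{s}+s$. I expect the main delicate point to be precisely this last conversion from the closed interval appearing in the statement to the strict inequalities required by the definition of $A(\cO_K)$, which the nonzero-conjugate observation resolves.
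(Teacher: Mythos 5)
Your proof is correct and takes essentially the same route as the paper: realize the infimum via Lemmas~\ref{lemJR1} and~\ref{lemJR2}, use Lemma~\ref{lemJR3} to get infinitely many $\gamma_q=\lambda\cos(q\theta)+\mu\cos((1-q)\theta)\in\cO_K$, bound the conjugates via Lemmas~\ref{L:enumerationconjugue} and~\ref{L:maxsommecos}, and shift by $\Ceil{s}$. The one place you are more explicit than the paper is the final step: the paper simply asserts that the translated conjugates are positive and smaller than $\Ceil{s}+s$, which is not literally true if $s\in\Z$; your observation that $0$ cannot be a conjugate of a nonzero algebraic integer, so that after discarding the (at most one) vanishing element all conjugates lie in the open interval $(0,t)$ for every $t>\Ceil{s}+s$, cleanly patches this and justifies the passage to the strict inequalities required by the definition of $R_t$.
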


\begin{proof}
By Lemma \ref{lemJR1}, for every $k$ between $0$ and $N-1$, $S_k$ is a discrete $Z$-module. 
Then, by Lemma \ref{lemJR2}, there exists $k$ and $( \lambda , \mu ) \in S_k$ such that $N ( \lambda , \mu ) = s$.
By Lemma \ref{lemJR3}, there exist infinitely many  $q \in [0, 1] \cap \Q$ such that $\gamma_q = \lambda\cos(q\theta)+\mu\cos( (1-q)\theta) \in \cO_K$. From Lemma~\ref{L:enumerationconjugue} we see that the conjugates of $\gamma_q$ are the $\lambda\cos(q\theta+2\pi qt)+\mu\cos( (1-q)\theta-2\pi qt)$, with $t\in\Z$. Hence by Lemma~\ref{L:maxsommecos}, every conjugate $\gamma_q'$ of $\gamma$ satisfies
\[
\module{\gamma_q'} \le \sqrt{\lambda^2+\mu^2+2\lambda\mu\cos(\theta)}= \sqrt{N ( \lambda , \mu )} = s.
\]
Hence every conjugate of $\Ceil{s}+\gamma_q$ is positive, and it is smaller than $\Ceil{s}+s$.
This proves that the Julia Robinson Number of $\cO_K$ is at most $\Ceil{s}+s$. 
\end{proof}

\begin{theorem}\label{T:MainJuliaRobinsonNumber}
The Julia Robinson Number of $\cO_K$ is $\Ceil{s}+s$. Moreover $\cO_K$ has the Julia Robinson Property.
\end{theorem}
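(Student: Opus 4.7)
Corollary~\ref{C:corJR1} supplies both the upper bound $\JR(\cO_K)\le\Ceil{s}+s$ and infinitely many witnesses realizing it, so $\Ceil{s}+s\in A(\cO_K)$; the Julia Robinson Property and the equality $\JR(\cO_K)=\Ceil{s}+s$ both follow from the matching lower bound. The plan is to prove this lower bound by contradiction: assume some $c<\Ceil{s}+s$ admits infinitely many $\beta\in\cO_K$ whose conjugates all lie in $[0,c]$.

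Each such $\beta$ lies in $K_n$ for some $n=r^{e}$ and is written $\beta=\lambda_0+\sum_{k=1}^{n-1}\lambda_k\cos(k\theta/n)$ by Lemma~\ref{L:basecosinus}; Lemma~\ref{L:descriptionentieralgebriqueavecSk} gives $\lambda_0\in\Z$ and each nonzero pair $(\lambda_k,\lambda_{n-k})\in S_j$ for the appropriate $j$. Popoviciu's variance inequality applied to the conjugates, combined with Lemma~\ref{L:TSommeCos}, yields
\[
\tfrac12\sum_{k=1}^{\floor{n/2}}N(\lambda_k,\lambda_{n-k})=T(\beta^2)-\lambda_0^2\le\frac{c^2}{4}.
\]
Every nonzero pair contributes at least $s^2$, and combining $c<\Ceil{s}+s\le 2s+1$ with $s\ge 2\sqrt 2$ from Corollary~\ref{C:preencadrementJR} forces the number $\module{J}$ of nonzero pairs to be at most $2$, and at most $1$ when $s\in\Z$. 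The discreteness of each $S_j$ (Lemma~\ref{lemJR1}) forces the pair values (of bounded $N$-norm) to lie in a finite set, while $\lambda_0\in[0,c]\cap\Z$ is already finite, so after passing to a subsequence I may fix $\lambda_0$, $\module{J}$ and the pair values. A Galois-style reduction, dividing $n$ and the active indices by their common gcd with $n$, then places $\beta$ in $K_{n'}$; if $n'$ stays bounded, only finitely many $\beta$ appear by Northcott, closing the argument, so I may assume $n'\to\infty$. In particular, when $\module{J}=2$ the reduction forces $\gcd(k_1,k_2,n')=1$, the hypothesis of Theorem~\ref{T:conjuguecascomplexe}, or $\gcd(k_1,n'/2)=1$ when one pair is the self-pair at $n'/2$, the hypothesis of Theorem~\ref{T:conjuguecascomplexeavecthetasur2}.

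The case $\module{J}=0$ is immediate. For $\module{J}=1$, Corollary~\ref{C:grandconjuguecassimple} applied to the pair and to its negative produces, for any $\varepsilon>0$ and all $n'$ sufficiently large, conjugates of $\beta$ greater than $\lambda_0+\sqrt N-\varepsilon$ and less than $\lambda_0-\sqrt N+\varepsilon$, where $\sqrt N\ge s$. The box conditions $0\le\min$ and $\max\le c$ together with $\lambda_0\in\Z$ require an integer $\lambda_0\in[\Ceil{\sqrt N-\varepsilon},\Floor{c-\sqrt N+\varepsilon}]$. The function $f(x)=x+\Ceil{x}$ is strictly increasing, so $\sqrt N+\Ceil{\sqrt N}\ge s+\Ceil{s}>c$; equivalently, for every $\varepsilon<f(s)-c$ this integer interval is empty, a contradiction. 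The case $\module{J}=2$ runs identically with Theorem~\ref{T:conjuguecascomplexe}, respectively Theorem~\ref{T:conjuguecascomplexeavecthetasur2}, replacing Corollary~\ref{C:grandconjuguecassimple}: they give a conjugate of $\beta-\lambda_0$ exceeding $\sqrt{\min(N_1,N_2)}-\varepsilon\ge s-\varepsilon$ together with a symmetric lower conjugate, driving the same empty-integer-interval contradiction.

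The main obstacle is not the analytic content of Section~\ref{sec5}, which is already at hand, but identifying the correct a priori bound on the number of active pairs. The variance inequality $\module{J}\le 2$ is what cuts an otherwise combinatorially delicate situation (pairs whose indices may have arbitrary gcd structure with $n$, with no pair of indices satisfying $\gcd(k_1,k_2,n)=1$, as for instance $n=30$ with active indices $\set{6,10,15}$) down to exactly the one- and two-pair cases those theorems were designed to handle.
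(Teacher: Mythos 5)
Your proposal is correct and follows essentially the same route as the paper's proof: the upper bound and the witnesses come from Corollary~\ref{C:corJR1}; the lower bound is obtained by bounding the number of ``active'' cosine pairs via the range/variance inequality (your Popoviciu appeal is exactly Lemma~\ref{L:trace3} applied to $\gamma-T(\gamma)$, via Lemma~\ref{L:TSommeCos}), reducing to at most two pairs thanks to $s\ge 2\sqrt 2$ from Corollary~\ref{C:preencadrementJR}, and then invoking Corollary~\ref{C:grandconjuguecassimple}, Theorem~\ref{T:conjuguecascomplexe} and Theorem~\ref{T:conjuguecascomplexeavecthetasur2} once the denominator $n$ exceeds a fixed threshold. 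The only stylistic difference is that you pass to a subsequence and fix the data, whereas the paper packages the same finiteness conclusion into an explicit finite set $Y$; your observation that the count drops to $\le 1$ when $s\in\Z$ is true but not used by the paper. Your remark at the end correctly identifies the load-bearing step -- the a~priori bound $\module{J}\le 2$ is exactly what makes the gcd case analysis of Section~\ref{sec5} sufficient.
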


\begin{proof}
From Corollary~\ref{C:corJR1} we already know that the Julia Robinson Number of $\cO_K$ is at most $\Ceil{s}+s$. Moreover there are infinitely many $\beta\in\cO_K$ such that all conjugates of $\beta$ are in $[0,\Ceil{s}+s]$, thus to prove that $\cO_K$ has the Julia Robinson Property, we only have to prove that the Julia Robinson Number of $\cO_K$ is $ \Ceil{s}+s$.

Let $\varepsilon>0$ be a real number. We can assume that $\varepsilon\le 1$, and if $s$ is not an integer we also assume that $\varepsilon$ is smaller than the fractional part of $s$. In particular
\begin{equation}\label{E:majorfractional}
q-s+\varepsilon < 0\,,\text{for all integer $q< s$.}
\end{equation}

Denote by $S$ the set of all pairs $(\lambda,\mu)\in\bigcup_{k=0}^{N-1}S_k$ such that $N(\lambda,\mu)\le 2s^2+s+1$. We see from Lemma~\ref{lemJR2} that $S$ is finite. Let $B$ be a power of $r$ such that $B\ge 17$ and
\[
B\ge \Ceil{4\pi\frac{\module{\lambda}+\module{\mu}}{\varepsilon}}^2\,,\quad\text{for all $(\lambda,\mu)\in S$.}
\]
Set 
\[
X=\Setm{\lambda\cos\frac{t\theta}{d} + \mu\cos\frac{(d-t)\theta}{d}}{d\le B,\ 1\le t\le d-1\text{, and }(\lambda,\mu)\in S}
\]
and
\[
X'=\Setm{\lambda\cos\frac{\theta}{2} }{(\lambda,\lambda)\in S}\,.
\]
In particular $0\in X$, and so $X\subseteq X+X$. Set $Y=\set{1,2,\dots,2\ceil{s}-1} + X + (X\cup X')$.

Let $\gamma\in\cO_K$ be such that all conjugates of $\gamma$ are in $[0,\ceil{s}+s]$. There is $n\in\N$ such that $\beta\in K_n=\Q(\cos(\frac{\theta}{n}))$. By Lemma~\ref{L:basecosinus} we can write $\gamma=\lambda_0+\sum_{t=1}^{n-1}\lambda_t\cos\frac{t\theta}{n}$, with $\lambda_0,\lambda_1,\dots,\lambda_{n-1}\in\Q$. From Lemma~\ref{L:caracterisionentieralgebrique} we see that $\lambda_0\in\Z$, and for all $1\le t\le n-1$ we have $(\lambda_t,\lambda_{n-t})$ in some $S_k$, with $0\le k\le N-1$. So, by construction of $s$, either $(\lambda_t,\lambda_{n-t})=(0,0)$ or $N(\lambda_t,\lambda_{n-t})\ge s^2$. Set $\beta=\gamma-\lambda_0=\sum_{t=1}^{n-1}\lambda_k\cos\frac{t\theta}{n}$. It follows from Lemma~\ref{L:basecosinus} that $T(\beta)=0$.

As $\gamma=\lambda_0+\beta$ has no negative conjugates, it follows from Lemma~\ref{L:trace0} that $\lambda_0>0$, similarly we have $\lambda_0<2\ceil{s}$. Moreover the maximal difference of two conjugates of $\beta$ is $\delta(\beta)=\delta(\gamma)\le \ceil{s}+s$. The trace of $\beta^2$ is computed in Lemma~\ref{L:TSommeCos}, that is $T(\beta^2) = \frac{1}{2}\sum_{k=1}^{\floor{\frac{n}{2}}} N(\lambda_k,\lambda_{n-k})$. Moreover from Lemma~\ref{L:trace3} we have $\delta(\beta)\ge 2\sqrt{T(\beta^2)}$. Therefore we obtain the following inequality
\[
2\sqrt{\frac{1}{2}\sum_{k=1}^{\floor{\frac{n}{2}}} N(\lambda_k,\lambda_{n-k})}\le \ceil{s}+s \le 2s+1\,.
\]
Thus
\begin{equation}\label{E:major1}
2\sum_{k=1}^{\floor{\frac{n}{2}}} N(\lambda_k,\lambda_{n-k})\le (2s+1)^2\,.
\end{equation}
Let $1\le k\le n-1$. If $N(\lambda_k,\lambda_{n-k})>2s^2+s+1$ then from \eqref{E:major1} we have
\[
4s^2+2s+2=2(2s^2+s+1)<2N(\lambda,\mu)\le (2s+1)^2=4s^2+2s+1\,;
\]
a contradiction. Therefore $N(\lambda_k,\lambda_{n-k})\le 2s^2+s+1$. Hence we have $(\lambda_k,\lambda_{n-k})\in S$, for all $1\le k\le n-1$.

Let $C$ be the number of $k\in\set{1,\dots,\floor{\frac{n}{2}}}$ such that $(\lambda_k,\lambda_{n-k})\not=(0,0)$. For each such $k$ we have $N(\lambda_k,\lambda_{n-k})\ge s^2$. Thus by \eqref{E:major1} we have $2Cs^2\le (2s+1)^2$, so $C\le 2+\frac{1}{s} + \frac{1}{2s^2}$. However, from Lemma~\ref{C:preencadrementJR} we have $s\ge 2\sqrt{2}$. As $C$ is an integer it follows that $C\le 2$.

If $C=0$, then $\beta=0$, and so $\gamma=\lambda\in Y$. If $C=1$ then we can write $\beta = \lambda \cos\frac{\theta}{2}$ with $(\lambda,\lambda)\in S$, or $\beta = \lambda \cos\frac{k\theta}{n} + \mu \cos\frac{k\theta}{n}$, with $1\le k\le n-1$, and $(\lambda,\mu)\in S$. In the first case we see that $\gamma=\lambda_0+\beta\in Y$. We now treat the second case. We can assume that $\gcd(k,n)=1$.

Assume that $n\ge B$. Thus $n\ge A$, so from Corollary~\ref{C:grandconjuguecassimple} and Lemma~\ref{L:enumerationconjugue} we obtain a conjugate $\beta'$ of $\beta$ such that $\beta'\ge\sqrt{\lambda^2+\mu^2+2\lambda\mu\cos\theta} -\varepsilon = N(\lambda,\mu)-\varepsilon\ge s-\varepsilon$. Similarly we have a conjugate $\beta''$ of $\beta$ such that $\beta''\le -s+\varepsilon$.

If $\lambda_0\ge \ceil{s}$, then a conjugate of $\gamma$ is $\lambda_0+\beta'\ge\ceil{s}+ s-\varepsilon$; a contradiction. If $\lambda_0<\ceil{s}$, then $\lambda_0<s$, so a conjugate of $\gamma$ is $\lambda_0+\gamma''\le \lambda_0-s+\varepsilon$ which is negative by \eqref{E:majorfractional}; a contradiction. Therefore $n\le B$, and so $\gamma\in Y$.

If $C=2$, once again simplifying fractions, then we can write $\beta = \lambda_1 \cos\frac{k\theta}{n} + \mu_1 \cos\frac{(n-k)\theta}{n} + \lambda_2 \cos\frac{\theta}{2}$ with $(\lambda_2,\lambda_2)\in S$, $(\lambda_1,\mu_1)\in S$, and $\gcd(k,\frac{n}{2})=1$, or $\beta = \lambda_1 \cos\frac{k\theta}{n} + \mu_1 \cos\frac{(n-k)\theta}{n} + \lambda_2 \cos\frac{\ell\theta}{n} + \mu_2 \cos\frac{(n-\ell)\theta}{n}$, with $1\le k<\ell<n-\ell\le n-1$, $(\lambda_1,\mu_1)\in S$, $(\lambda_2,\mu_2)\in S$, and $\gcd(k,\ell,n)=1$. In both case, assuming $n\ge B$, using respectively Theorem~\ref{T:conjuguecascomplexeavecthetasur2} and Theorem~\ref{T:conjuguecascomplexe} we have conjugates $\beta'$ and $\beta''$ of $\beta$ such that $\beta'\ge s-\varepsilon$, and $\beta''\le -s+\varepsilon$. As before we reach a contradiction. Therefore $n\le B$ and so $\gamma=\lambda_0+\beta\in Y$.
\end{proof}

As $\cO_K$ has the Julia Robinson Property, from the result of Robinson in  \cite{Robinson} we obtain the following Corollary of Theorem~\ref{T:MainJuliaRobinsonNumber}.

\begin{corollary}\label{C:indec}
The first order theory of $\cO_K$ is undecidable.
\end{corollary}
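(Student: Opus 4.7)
The corollary follows directly once the Julia Robinson Property established in Theorem~\ref{T:MainJuliaRobinsonNumber} is combined with Robinson's definability result cited in the introduction. My plan is therefore to invoke these two ingredients in sequence, with essentially no new computation.

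First I would recall that, by Theorem~\ref{T:MainJuliaRobinsonNumber}, the ring $\cO_K$ has the Julia Robinson Property; that is, $\JR(\cO_K) \in A(\cO_K)$, so the infimum $\Ceil{s}+s$ is actually attained in the sense that the set of $\alpha \in \cO_K$ whose conjugates all lie in $(0, \Ceil{s}+s)$ is infinite. This is exactly the hypothesis needed for the theorem of Julia Robinson stated at the beginning of the introduction.

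Next I would invoke Robinson's result from \cite{Robinson}: whenever a ring $R$ of totally real algebraic integers has the JRP, the semi-ring $(\N; 0, 1, +, \cdot)$ is first-order definable in $R$. Applying this with $R = \cO_K$, we obtain a first-order interpretation of $\N$ together with its arithmetic inside $\cO_K$.

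Finally, since the first-order theory of $(\N; 0, 1, +, \cdot)$ is undecidable (by the classical theorems of G\"odel--Church--Tarski), and undecidability is transferred along first-order interpretations, the first-order theory of $\cO_K$ is undecidable as well. There is no real obstacle here: the whole content of the corollary was packaged into Theorem~\ref{T:MainJuliaRobinsonNumber}, and the present statement is just the announced logical consequence.
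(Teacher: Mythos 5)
Your proposal is correct and matches the paper's argument exactly: the paper also derives the corollary immediately from Theorem~\ref{T:MainJuliaRobinsonNumber} (which gives the JRP for $\cO_K$) together with Robinson's definability theorem from \cite{Robinson} and the undecidability of arithmetic. The paper merely states this in one sentence preceding the corollary, while you spell out the interpretation step; the content is the same.
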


The following lemma express that there are not many ``simple'' square roots of rationals in $K$. It will be used to prove that many of the constructed fields are distincts.

\begin{lemma}\label{L:peuderacinescarre}
Let $(\lambda,\mu)\in\Q\setminus\set{(0,0)}$, let $n\ge 2$ be an integer dividing a power of $r$, let $1\le k\le n-1$. Then $\left(\lambda\cos\frac{k\theta}{n} + \mu\cos\frac{(n-k)\theta}{n}\right)^2\in\Q$ if and only if $2k=n$.
\end{lemma}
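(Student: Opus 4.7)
The plan is to expand $\beta^2$, where $\beta = \lambda\cos\frac{k\theta}{n} + \mu\cos\frac{(n-k)\theta}{n}$, via Lemma~\ref{L:Euler2}, and then test the result against the $\Q$-basis of $K_n = \Q(\cos\frac{\theta}{n})$ given by Lemma~\ref{L:basecosinus}. (Note that $n$ divides a power of $r$, so the hypotheses of Corollary~\ref{C:TraceNul}, and hence of Lemma~\ref{L:basecosinus}, are satisfied.)

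For the ``if'' direction, taking $2k = n$ specializes Lemma~\ref{L:Euler2} so that the two variable cosines become $\cos\theta = a/b$ and $\cos 0 = 1$, and $\beta^{2}$ is visibly a rational combination of rationals.

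For the converse, I assume $\beta^{2} \in \Q$ with $2k \ne n$ and derive a contradiction. The expression defining $\beta$ is invariant under $(\lambda, \mu, k) \mapsto (\mu, \lambda, n-k)$, so I may assume $2k < n$, in which case both $2k$ and $n-2k$ lie in $\{1, \dots, n-1\}$. Lemma~\ref{L:Euler2} then writes $\beta^{2}$ as a $\Q$-combination of $1$, $\cos\frac{2k\theta}{n}$ and $\cos\frac{(n-2k)\theta}{n}$, each of which is a basis vector from Lemma~\ref{L:basecosinus}. The analysis splits on whether $4k = n$. If $4k \ne n$, the two cosines are distinct basis vectors, so $\beta^{2} \in \Q$ forces both coefficients $(\lambda^{2}-\mu^{2})/2$ and $\lambda\mu + \mu^{2}\,a/b$ to vanish; the first gives $\lambda = \pm\mu$, and the second then forces $a/b = \mp 1$, contradicting $0 < a < b$.

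The main obstacle is the case $4k = n$, where the two basis elements collapse to $\cos\frac{\theta}{2}$ and only the sum of the two coefficients must vanish, yielding the single relation $b(\lambda+\mu)^{2} = 2(b-a)\mu^{2}$. I plan to rule out nonzero rational solutions with a $2$-adic valuation argument. Since $4 \mid n$ and $n$ divides a power of $r$, we must have $2 \mid r$; combined with the ``good'' hypothesis $\gcd(v_{2}(b)-1, r) = 1$, this forces $v_{2}(b)-1$ to be odd, so $v_{2}(b)$ is even. Because $\gcd(a,b) = 1$ and $4 \mid b$, the integer $a$ is odd, hence $b - a$ is odd; therefore $v_{2}\!\left(2(b-a)/b\right) = 1 - v_{2}(b)$ is odd, so $2(b-a)/b$ is not a rational square. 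The case $\mu = 0$ is immediately excluded because it forces $\lambda = 0$, contradicting $(\lambda,\mu) \ne (0,0)$.
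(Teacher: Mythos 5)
Your proof is correct and follows essentially the same route as the paper: expand $\beta^2$ via Lemma~\ref{L:Euler2}, exploit linear independence from Lemma~\ref{L:basecosinus}, treat the cases $4k\ne n$ and $4k=n$ separately, and in the latter use the $2$-adic valuation together with the ``good'' hypothesis $\gcd(v_2(b)-1,r)=1$ and $2\mid r$. The only cosmetic difference is in the $4k=n$ case: the paper views $\lambda$ as a root of a quadratic $\Lambda^2+2\mu\Lambda+2\mu^2\frac{a}{b}-\mu^2$ and requires its discriminant $8\mu^2\frac{b-a}{b}$ to be a rational square, while you complete the square directly to obtain $b(\lambda+\mu)^2=2(b-a)\mu^2$; these are the same constraint, and both conclude with the same parity-of-valuation contradiction. (Incidentally, the paper's displayed discriminant carries a sign typo, $\frac{a+b}{b}$ in place of $\frac{b-a}{b}$, but since both $b-a$ and $b+a$ are odd this does not affect the $v_2$ argument; your version is the cleaner one.)
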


\begin{proof}
Set $\beta=\lambda\cos\frac{k\theta}{n} + \mu\cos\frac{(n-k)\theta}{n}$. Note that $\left(\cos\frac{\theta}{2}\right)^2=\frac{1+\cos\theta}{2} = \frac{b+a}{2b}$ belongs to $\Q$. Thus if $n=2k$, then the following statement holds
\[
\beta^2 = \left((\lambda+\mu)\cos\frac{\theta}{2}\right)^2\in\Q\,.
\]

Assume that $n\not=2k$, hence $k\not=n-k$. We can assume that $k<n-k$. It follows from Lemma~\ref{L:Euler2} that
\begin{equation}\label{E:betacarre}
\beta^2= \frac{1}{2}\left(\lambda^2+\mu^2+2\lambda\mu\frac{a}{b} 
 + (\lambda^2-\mu^2) \cos\frac{2k\theta}{n} + 2(\lambda\mu+\mu^2\frac{a}{b}) \cos\frac{(n-2k)\theta}{n}\right).
\end{equation}
Note that if $ (\lambda^2-\mu^2) =0$ then $\lambda=\pm\mu\not=0$, hence $(\lambda\mu+\mu^2\frac{a}{b}) =\mu^2(\frac{a}{b}\pm 1)\not=0$. Assume that $2k\not=n-2k$, it follows from Lemma~\ref{L:basecosinus} that $1,  \cos\frac{2k\theta}{n}, \cos\frac{n-2k\theta}{n}$ are $\Q$-linearly independent, thus $\beta^2\not\in\Q$.

Assume that $2k=n-2k$, thus $4k=n$, so $n$ is even, and so $r$ is even. Moreover from \eqref{E:betacarre} we have
\[
\beta^2= \frac{1}{2}\left(\lambda^2+\mu^2+2\lambda\mu\frac{a}{b} + (\lambda^2-\mu^2 + 2\lambda\mu+2\mu^2\frac{a}{b})  \cos\frac{\theta}{2}\right)\,.
\]
Assume that $\beta^2\in\Q$. However from Lemma~\ref{L:basecosinus} we see that $1,\cos\frac{\theta}{2}$ are linearly independent, thus $\lambda^2-\mu^2 + 2\lambda\mu+2\mu^2\frac{a}{b}=0$. Hence $\lambda$ is a root of the polynomial $\Lambda^2+2\mu\Lambda + 2\mu^2\frac{a}{b} - \mu^2$. Therefore the discriminant $\Delta=4\mu^2-4(2\mu^2\frac{a}{b} - \mu^2) = 8\mu^2-8\mu^2\frac{a}{b}=8\mu^2\frac{a+b}{b}$ is a square. Therefore $2\frac{a+b}{b}$ is a square, thus $v_2(2\frac{a+b}{b})=1-v_2(b)$ is even. However $r$ and $v_2(b)-1$ are relatively prime, and $r$ is even; a contradiction.
\end{proof}

\section{Example of Julia Robinson's Number}\label{sec7}

In this section we construct particular $a$ and $b$ such that we can explicitly compute the number $s$ (see beginning of Section~\ref{S:JR} and Notation~\ref{N:notJR2}). We obtain the examples of Julia Robinson Numbers in Theorem~\ref{T:exemplenombreJR}. We basically consider two family of cases. For one we choose $a,b$ such that $b-a=1$ (which is a square). The other family is such that $b-a$, $b+a$ are large and square-free, the existence of such pairs $(a,b)$ relies on the fact that the density of square-free integers is large.

\begin{notation}
Given $x\ge 0$, $k\ge 1$, and $i$ relatively prime to $k$, denote by $Q(x;i,k)$ the number of integer $j\ge 0$, such that $j\le x$ and $j\equiv i\mod k$.
\end{notation}

The following theorem was proved by Prachar \cite{Prachar} (see also Hooley \cite{Hooley}).

\begin{theorem}\label{T:countsquarefree}
Let $k\ge 1$ be an integer and $i$ relatively prime to $k$. Then we have the following equivalence
\[
Q(x;i,k)\sim_{x\to\infty} x\frac{6}{\pi^2 k} \prod_{p|k} \frac{p^2}{p^2-1}.
\]
\end{theorem}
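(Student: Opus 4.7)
My plan is to apply the standard Möbius-inversion approach to counting square-free integers in arithmetic progressions. Writing $\mu$ for the Möbius function, recall that an integer $j \geq 1$ is square-free if and only if $\sum_{d^2 \mid j} \mu(d) = 1$, and otherwise the sum vanishes. Thus
\[
Q(x;i,k) = \sum_{\substack{1 \leq j \leq x \\ j \equiv i \pmod{k}}} \sum_{d^2 \mid j} \mu(d),
\]
and I would exchange the order of summation to rewrite this as $\sum_{d \geq 1} \mu(d)\, N(x;i,k,d)$, where $N(x;i,k,d)$ denotes the number of $j \leq x$ satisfying $j \equiv i \pmod{k}$ and $d^2 \mid j$.

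Next I would evaluate $N(x;i,k,d)$. The simultaneous congruences $j \equiv i \pmod{k}$ and $j \equiv 0 \pmod{d^2}$ admit a solution only when $\gcd(d,k) \mid i$, and since $\gcd(i,k) = 1$ this forces $\gcd(d,k) = 1$. In that case the Chinese Remainder Theorem produces a single residue class modulo $kd^2$, so $N(x;i,k,d) = x/(kd^2) + O(1)$. Moreover $N(x;i,k,d) = 0$ whenever $d > \sqrt{x}$, since no positive multiple of $d^2$ is then $\leq x$. Substituting gives
\[
Q(x;i,k) = \frac{x}{k}\sum_{\substack{d \leq \sqrt{x} \\ \gcd(d,k)=1}} \frac{\mu(d)}{d^2} + O(\sqrt{x}).
\]

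Finally I would extend the inner sum to all $d$ coprime with $k$; the tail contributes an error of size $O\bigl(x \sum_{d > \sqrt{x}} 1/d^2\bigr) = O(\sqrt{x})$, while the completed sum factors as an Euler product
\[
\sum_{\substack{d \geq 1 \\ \gcd(d,k)=1}} \frac{\mu(d)}{d^2} = \prod_{p \nmid k}\left(1 - \frac{1}{p^2}\right) = \frac{6}{\pi^2}\prod_{p \mid k}\frac{p^2}{p^2-1},
\]
using $\prod_p(1 - 1/p^2) = 1/\zeta(2) = 6/\pi^2$. Combining yields
\[
Q(x;i,k) = \frac{6x}{\pi^2 k}\prod_{p \mid k}\frac{p^2}{p^2-1} + O(\sqrt{x}),
\]
which in particular gives the asserted asymptotic equivalence.

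There is no real obstacle: this is a direct adaptation of the classical density computation for square-free integers, and the only subtle point is the coprimality condition $\gcd(d,k) = 1$, which is exactly what produces the finite Euler factors at primes dividing $k$ and thereby accounts for the discrepancy with the ambient density $6/\pi^2$. The sharper error terms obtained by Prachar and Hooley require more delicate analytic input (e.g.\ bounds on exponential sums), but they are not needed for the asymptotic form stated here.
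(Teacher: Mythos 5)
Your proof is correct and complete: the Möbius-inversion argument (expanding the square-free indicator as $\sum_{d^2\mid j}\mu(d)$, interchanging summation, invoking CRT under the coprimality condition $\gcd(d,k)=1$, truncating at $d\le\sqrt x$, and completing the Euler product) is the standard elementary derivation of this asymptotic, and every step is justified. The paper does not actually prove Theorem~\ref{T:countsquarefree}; it simply cites Prachar and Hooley, whose contribution, as you correctly observe at the end, lies in the sharper error terms that are not required for the bare asymptotic equivalence used here. One minor remark: the paper's preceding Notation omits the word ``square-free'' in the definition of $Q(x;i,k)$, which must be a typographical slip --- without it the statement would collapse to $Q(x;i,k)\sim x/k$ --- and you have correctly read the intended meaning.
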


\begin{lemma}\label{L:squarfreenumbers}
Let $k\ge 1$ be an integer, let $1>\varepsilon>0$ be a real number. Then there is an integer $C$ such that for all $b$ multiple of $k$ if $b\ge C$ then there is $1\le a\le \varepsilon b$ such that $a\equiv 1\mod k$, and $b-a$ and $b+a$ are square-free.
\end{lemma}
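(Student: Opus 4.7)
The plan is a density/counting argument using Prachar's Theorem~\ref{T:countsquarefree}. First I would observe that since $k\mid b$ and we require $a\equiv 1\pmod k$, we automatically have $b-a\equiv -1\pmod k$ and $b+a\equiv 1\pmod k$, with both residues coprime to $k$, so each of the conditions ``$b\pm a$ is square-free'' falls within the scope of Theorem~\ref{T:countsquarefree}.

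The key quantitative input is the constant
\[
\rho = \frac{6}{\pi^2}\prod_{p\mid k}\frac{p^2}{p^2-1},
\]
which by Theorem~\ref{T:countsquarefree} is the asymptotic proportion of integers in any arithmetic progression mod $k$ of residue coprime to $k$ that are square-free. Crucially $\rho\ge 6/\pi^2>\tfrac12$, since every factor $p^2/(p^2-1)\ge 1$; this strict inequality provides the slack needed for a union bound on two events. Concretely, the number of admissible $a\in[1,\varepsilon b]$ with $a\equiv 1\pmod k$ equals $\varepsilon b/k+O(1)$; as $a$ varies, $b-a$ sweeps through an arithmetic progression of common difference $k$ inside $[(1-\varepsilon)b,b-1]$, and applying Theorem~\ref{T:countsquarefree} at both endpoints and subtracting gives $\rho\varepsilon b/k+o(b)$ admissible $a$ for which $b-a$ is square-free; symmetrically for $b+a$. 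By a union bound on the complement, the count of ``good'' $a$ is bounded below by
\[
\frac{\varepsilon b}{k}-\frac{2(1-\rho)\varepsilon b}{k}+o(b)=\frac{(2\rho-1)\varepsilon b}{k}+o(b),
\]
a strictly positive multiple of $b$ depending only on $k$ and $\varepsilon$, hence $\ge 1$ once $b\ge C(k,\varepsilon)$.

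The only technical point to check is that the $o$-terms produced by Theorem~\ref{T:countsquarefree}, which is stated as an asymptotic, survive the subtraction $Q(y;i,k)-Q(x;i,k)$ with $x,y=\Theta(b)$. This is automatic once $\varepsilon$ is fixed, because $y-x$ is then a fixed positive fraction of $b$, so the two individual error terms are absorbed into a single $o(b)$ that is dominated by the main term $(2\rho-1)\varepsilon b/k$. No deeper analytic input is needed beyond Theorem~\ref{T:countsquarefree}.
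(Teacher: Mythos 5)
Your proof is correct and follows essentially the same route as the paper's: both rely on Prachar's Theorem~\ref{T:countsquarefree} together with the observation that the square-free density $\tfrac{6}{\pi^2}\prod_{p\mid k}\tfrac{p^2}{p^2-1}$ in each relevant progression exceeds $\tfrac12$, and then a union bound. The paper phrases the union bound as a pigeonhole argument via the bijection $r\mapsto 2b-r$ between the residue classes $1$ and $-1$ in the two intervals, and uses the concrete lower bound $\tfrac{6}{\pi^2}>\tfrac35$ to make the constant $C$ explicit, but the underlying count is identical to yours.
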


\begin{proof}
Set $l=\frac{6}{\pi^2 k} \prod_{p|k} \frac{p^2}{p^2-1}$. Note that $kl\ge\frac{3}{5}$. From Theorem~\ref{T:countsquarefree} we have
\[
\lim_{x\to\infty} \frac{Q(x;1,k)}{x} = l\quad\text{ and } \lim_{x\to\infty} \frac{Q(x;-1,k)}{x} = l\,.
\]
Set $3\eta = \varepsilon(l-\frac{3}{5k})$. Pick $A\ge 0$ such that for all $x\ge A$ and $i=\pm 1$ we have $\module{\frac{Q(x;i,k)}{x}-l}\le\eta$.

Set $C=\frac{A+1}{1-\varepsilon}$. Let $b\ge C$, thus $b-\varepsilon b-1\ge A$. Therefore we have
\[
\module{\frac{Q(b-\varepsilon b;i,k)}{b-\varepsilon b} -l} \le\eta\,.
\]
It follows that
\begin{equation}\label{E:bornageQ1}
\module{Q(b-\varepsilon b;i,k) -(b-\varepsilon b)l} \le (b-\varepsilon b)\eta\,.
\end{equation}
Similarly
\begin{equation}\label{E:bornageQ2}
\module{Q(b;i,k) -bl} \le b\eta
\end{equation}
and
\begin{equation}\label{E:bornageQ3}
\module{Q(b+\varepsilon b;i,k) -(b+\varepsilon b)l} \le (b+\varepsilon b)\eta\,.
\end{equation}
From \eqref{E:bornageQ1} and \eqref{E:bornageQ2} we have
\[
\module{ Q(b;i,k) - Q(b-\varepsilon b;i,k) +\varepsilon bl } \le 2b-\varepsilon b\eta \le 3b\eta \,.
\]

\begin{equation}\label{E:evaluationQ1}
Q(b;i,k) - Q(b-\varepsilon b;i,k) \ge \varepsilon bl -  3b\eta = \varepsilon bl  - \varepsilon b (l-\frac{3}{5k}) = \varepsilon b\frac{3}{5k}\,.
\end{equation}

Similarly from \eqref{E:bornageQ2} and \eqref{E:bornageQ3} we deduce
\begin{equation}\label{E:evaluationQ2}
 Q(b+\varepsilon b;i,k)  - Q(b;i,k) \ge \varepsilon bl -  3b\eta = \varepsilon b\frac{3}{5k}\,.
\end{equation}

Consider the set $X_i=\setm{b+ijk+i}{j\text{ integer and }0\le j\le \frac{\varepsilon b-1}{k}}$. Hence $X_1\subseteq [b,b+\varepsilon b]$, moreover $X_1$ is the set of all integers in $[b,b+\varepsilon b]$ congruent to 1 modulo $k$. Similarly $X_{-1}\subseteq [b-\varepsilon b,b]$, moreover $X_{-1}$ is the set of all integers in $[b-\varepsilon b,b]$ congruent to $-1$ modulo $k$.

From \eqref{E:evaluationQ1} we have, in the interval $[b-\varepsilon b,b]$, at least $Q(b;-1,k) - Q(b-\varepsilon b;-1,k)\ge \varepsilon b\frac{3}{5k}$ square-free integers congruent to $-1$ modulo $k$. However all integer in $[b-\varepsilon b,b]$ congruent to $-1$ modulo $k$ are in $X_{-1}$. Therefore there are at least $\Ceil{\varepsilon b\frac{3}{5k}}$ square-free integers in $X_{-1}$. However $X_{-1}$ has $\Floor{\frac{\varepsilon b-1}{k}}$ elements, therefore the proportion of square-free elements in $X_{-1}$ is at least $\varepsilon b\frac{3}{5k} \frac{k}{\varepsilon b} = \frac{3}{5}$. Similarly, by \eqref{E:evaluationQ2}, the proportion of square-free elements in $X_{1}$ is at least $\frac{3}{5}$.

Note that $X_1\to X_{-1}$, $r\mapsto 2b-r$ is a bijection. Therefore there is a square-free $r\in X_1$ such that $2b-r$ is square-free. Set $a=r-b$, thus $b+a=r$ is square-free, and $b-a=2b-r$ is square-free. Moreover as $r\in X_1$, we have $0<r-b \le \varepsilon b$. Also note that $a\equiv r\equiv 1\mod k$.
\end{proof}

We shall give some explicite examples of finite Julia Robinson's Number distinct from $4$. On the other hand we also want to show, for every example of Julia Robinson's Number $j$ obtained, that there exist infinitely many rings of the algebraic integers of totally real fields having Julia Robinson's Number $j$. We will use the following lemma.

\begin{lemma}\label{L:corpsdifferents}
Let $(a_1,b_1,r_1)$ be good; let $(a_2,b_2,r_2)$ be good. Denote by $K_1$ (resp., $K_2$) the totally real field associated to $(a_1,b_1,r_1)$ (resp., $(a_2,b_2,r_2)$). Assume that $r_1,r_2$ are even. If $(b_1+a_1)(b_2+a_2)b_1b_2$ is not a square then $K_1\not=K_2$.
\end{lemma}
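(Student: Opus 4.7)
The plan is to argue by contradiction: assume $K_1 = K_2 =: F$ and show that $(b_1+a_1)(b_2+a_2) b_1 b_2$ is then forced to be a rational square, contradicting the hypothesis. Goodness of $(a_i,b_i,r_i)$ with $r_i$ even forces $v_2(b_i)-1$ to be odd (being coprime to the even number $r_i$), hence $v_2(b_i)$ is even and $\geq 2$. Writing $c_i := \cos(\theta_i/2)$, we have $c_i^2 = (b_i+a_i)/(2b_i)$ with $2$-adic valuation $-1-v_2(b_i)$ odd, so $c_i$ is irrational and $K_{i,2} = \Q(c_i) = \Q\bigl(\sqrt{2b_i(b_i+a_i)}\bigr)$ is a genuine quadratic subfield of $K_i$. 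Granting momentarily the claim that $K_i$ admits $K_{i,2}$ as its \emph{unique} quadratic subfield, the element $c_2 \in F = K_1$ (with $c_2^2 \in \Q$, $c_2 \notin \Q$) must lie in $K_{1,2} = \Q(c_1)$. Writing $c_2 = p + q c_1$ with $p,q \in \Q$, squaring and using the irrationality of $c_1$ gives $pq = 0$; since $q \neq 0$, we have $p = 0$ and $c_2 = q c_1$. Squaring this identity yields $(b_2+a_2)/(2b_2) = q^2(b_1+a_1)/(2b_1)$, so that $b_1 b_2(b_1+a_1)(b_2+a_2)$ is the square of the rational number $q \cdot b_2(b_1+a_1)$, the desired contradiction.

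The main obstacle is thus the uniqueness claim for the quadratic subfield of $K_i$. Fix $y \in K_{i,n}$, $n = r_i^k$, with $y^2 \in \Q$ and $y \neq 0$. Expand $y = \lambda_0 + \sum_{j=1}^{n-1} \lambda_j \cos(j\theta_i/n)$ via Lemma~\ref{L:basecosinus}; by Lemma~\ref{L:enumerationconjugue}, the conjugates are $y_t = \lambda_0 + \sum_j \lambda_j \cos\bigl(j(\theta_i+2\pi t)/n\bigr)$ for $t = 0, \ldots, n-1$. Since $y_t^2 = y^2$, one has $y_t = \epsilon_t y$ with $\epsilon_t \in \{\pm 1\}$ and $\epsilon_0 = 1$. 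Summing over $t$ and invoking $\sum_t \cos\bigl(j(\theta_i+2\pi t)/n\bigr) = 0$ for $1 \le j \le n-1$ yields $\lambda_0 = 0$ and $\sum_t \epsilon_t = 0$. The crucial remaining step, and the chief technical difficulty, is to show $\epsilon_t = (-1)^t$: combined with $\sigma_t(c_i) = (-1)^t c_i$, this makes $\sigma_t(y/c_i) = y/c_i$ for every $t$, so $y/c_i \in \Q$ and $y \in \Q c_i \subseteq K_{i,2}$, completing the uniqueness claim.

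A clean route to $\epsilon_t = (-1)^t$ is via the maximal Galois subextension of $K_{i,n}/\Q$: every quadratic subfield of $K_{i,n}$ is normal over $\Q$, hence is contained in this maximal Galois subextension, which we show equals $K_{i,2}$. Concretely, intersect the conjugate fields $\Q(\cos((\theta_i+2\pi t)/n))$: writing $\cos((\theta_i+2\pi t)/n) = c\cos(2\pi t/n) - s\sin(2\pi t/n)$ with $c = \cos(\theta_i/n)$ and $s = \sin(\theta_i/n)$, and expanding $Q_t\bigl(c\cos(2\pi t/n) - s\sin(2\pi t/n)\bigr)$ via $s^2 = 1-c^2$, splits $Q_t$ as $A_t(c) + s B_t(c)$. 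Since $s \notin K_{i,n}$ (as $L_{i,n} = K_{i,n}(is)$ is a proper CM-extension of $K_{i,n}$), any element of the intersection must have $B_t \equiv 0$ for each $t$. Direct inspection---already visible for $n = 4$, $t = 1$, where this condition forces $P(c)$ to be an even polynomial---shows that the system of constraints $B_t \equiv 0$ for all $t$ forces $P(c)$ to be a polynomial in $c^2 = (1+c_i)/2$, so $P(c) \in \Q[c_i] = K_{i,2}$, completing the argument.
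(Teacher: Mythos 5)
Your high-level strategy---assume $K_1=K_2$, deduce $c_2:=\cos(\theta_2/2)\in\Q(c_1)$, force $c_2=qc_1$, and conclude $(b_1+a_1)(b_2+a_2)b_1b_2$ is a square---is a genuinely different route from the paper's. The paper instead picks the element of $\cO_{K_1}$ of minimal trace-norm $s_1^2$, writes it both as a two-term cosine combination in $\theta_1$ and in $\theta_2$ (Claim~1), then squares and applies the trace $T$ and Lemma~\ref{L:peuderacinescarre} to drive the denominators down to $2$ (Claim~2). Both routes end at the identity $x\cos(\theta_1/2)=y\cos(\theta_2/2)$. Your preliminary steps are sound: goodness plus $r_i$ even indeed gives $v_2(b_i)$ even and $v_2(c_i^2)=-1-v_2(b_i)$ odd, so $c_i\notin\Q$; and if $c_2\in\Q(c_1)$ with $c_2^2\in\Q$, then $c_2\in\Q c_1$ as you say.

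The proof, however, hinges entirely on the claim that $K_{i,2}$ is the \emph{unique} quadratic subfield of $K_i$ (equivalently, of each $K_{i,n}$), and this is where the argument has a real gap. You reduce it to showing $\epsilon_t=(-1)^t$ for the sign cocycle attached to a $y$ with $y^2\in\Q$, and propose to do so by showing the maximal Galois subextension of $K_{i,n}/\Q$ equals $K_{i,2}$. But the ``direct inspection'' you offer conflates two different fields: for general $n$ the constraints $B_t\equiv 0$ would at best force $P(c)\in\Q[c^2]$, and $\Q(c^2)=\Q\bigl(\cos(2\theta_i/n)\bigr)=K_{i,n/2}$, which is a degree-$n/2$ field, \emph{not} $K_{i,2}$. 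The identity $c^2=(1+c_i)/2$ holds only when $n=4$; for $n>4$ the leap from ``polynomial in $c^2$'' to ``lies in $\Q[c_i]$'' is unjustified, and if you try to iterate the argument you would need to redo the $B_t\equiv 0$ analysis in each $K_{i,n/2^j}$, which is not addressed. So the one claim you correctly flag as ``the main obstacle'' is left unproved, and the sketch for it is wrong as stated. Note that the paper sidesteps exactly this by never invoking uniqueness of a quadratic subfield: its Claim~1 uses minimality of $s_2$ to bound the number of nonzero cosine pairs in the $\theta_2$-expansion, and Claim~2 uses Lemma~\ref{L:peuderacinescarre} (which already encodes the only ``two-term'' rational squares) plus minimality of $n_1$ to reduce to $n_1=2$. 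If you want to pursue your route you would need to give an actual proof of the uniqueness claim, and it is not clear that this is easier than the paper's trace argument.
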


\begin{proof}
Let $(\theta_i,K_i,U_i,s_i,N_i)$ be the objects associated to $(a_i,b_i,r_i)$, for $i=1,2$. Assume that $K_1=K_2$, then $\cO_{K_1}=\cO_{K_2}$, thus $\JR(\cO_{K_1}) = \JR(\cO_{K_2})$, so by Lemma~\ref{T:MainJuliaRobinsonNumber} we have $\ceil{s_1}+s_1=\ceil{s_2}+s_2$, and so $s_1=s_2$.

\begin{claim}
There are $n_1\ge 2$ a power of two, $n_2\ge 2$ an integer, $1\le k_1<\frac{n_1}{2}$, $1\le k_2\le n_2-1$ integers, $\lambda_1,\mu_1,\lambda_2,\mu_2$ in $\Q$ such that  
\begin{equation}\label{E:claim1}
\lambda_1\cos\frac{k_1\theta_1}{n_1} + \mu_1\cos\frac{(n_1-k_1)\theta_1}{n_1}=\lambda_2\cos\frac{k_2\theta_2}{n_2}+\mu_2\cos\frac{(n_2-k_2)\theta_2}{n_2}\not=0\,.
\end{equation}
\end{claim}

\begin{cproof}
As the infinum in the definition of $s_1$ (cf Notation~\ref{N:notJR2}) is a minimum (by Lemma~\ref{lemJR2}), there is $(\lambda_1,\mu_1)\in U_1$ such that $s_1^2=N_1(\lambda_1,\mu_1)$. It follows from Lemma~\ref{lemJR3} that there are an odd integer $k_1$ and a power of two $n_1\ge 4$ such that $1\le k_1<n_1$ and $\beta=\lambda_1\cos\frac{k_1\theta_1}{n_1} + \mu_1\cos\frac{(n_1-k_1)\theta_1}{n_1}\in\cO_{K_1}$. Up to permuting $\lambda_1$ and $\mu_1$, we can assume that $k_1<\frac{n_1}{2}$.

As $\beta\in\cO_{K_1}=\cO_{K_2}$, there is $m\ge 1$, and $\eta_0,\eta_1\dots \eta_{m-1}$ in $\Q$ such that
\[
\lambda_1\cos\frac{k_1\theta_1}{n_1} + \mu_1\cos\frac{(n-k_1)\theta_1}{n_1}=\eta_0+\sum_{\ell=1}^{n_2} \eta_\ell\cos\frac{\ell\theta_2}{n_2}\,.
\]
Therefore applying $T$ to both side of the equality, and multiplying by two, we obtain the following equality by Lemma~\ref{L:TSommeCos}.
\[
N_1(\lambda_1,\mu_1) = 2\eta_0^2 + \sum_{\ell=0}^{\floor{\frac{n_2}{2}}} N_2(\eta_\ell,\eta_{n_2-\ell})
\]
However for each $1\le\ell\le \frac{n_2}{2}$ we have either $\eta_\ell=0=\eta_{n_2-\ell}$ or $N_2(\eta_\ell,\eta_{n_2-\ell})\ge s_2^2=s_1^2=N_1(\lambda_1,\mu_1)$. As $\beta\not\in\Q$, it follows that there is $1\le p\le {n_2-1}$ such that $\eta_p\not=0$. Therefore for all $0\le\ell\le n_2-1$, if $\ell\not=p$ and $\ell\not=n_2-p$ we have $\lambda_\ell=0$, thus we obtain 
\[\tag*{\qedc}
\lambda_1\cos\frac{k_1\theta_1}{n_1} + \mu_1\cos\frac{(n_1-k_1)\theta_1}{n_1}=\eta_p\cos\frac{p\theta_2}{n_2}+\eta_{n_2-p}\cos\frac{(n_2-p)\theta_2}{n_2}\,.
\]
\renewcommand{\qedc}{}
\end{cproof}

\begin{claim}
There are $x,y$ in $\Q$ such that  
\[
x\cos\frac{\theta_1}{2}=y\cos\frac{\theta_2}{2}\not=0\,.
\]
\end{claim}

\begin{cproof}
Let $n_1,n_2,k_1,k_2,\lambda_1,\mu_1,\lambda_2,\mu_2$ as in Claim~1 with $n_1$ minimal. We can assume that $k_2\le\frac{n_2}{2}$. From Lemma~\ref{L:peuderacinescarre} and \eqref{E:claim1}, we have $2k_1=n_1$ if and only if $2k_2=n_2$.
Assume that $n_1\ge 4$.
Squaring both side we obtain from Lemma~\ref{L:Euler2}
\begin{multline}\label{E:calculsommecarre1}
\frac{1}{2}(\lambda_1^2+\mu_1^2+2\lambda_1\mu_1\frac{a_1}{b_1}) + (\lambda_1^2+\mu_1^2) \cos\frac{2k_1\theta_1}{n_1}
 + 2(\lambda_1\mu_1+\mu_1^2\frac{a_1}{b_1}) \cos\frac{(n_1-2k_1)\theta_1}{n_1}\\
 = \frac{1}{2}(\lambda_2^2+\mu_2^2+2\lambda_2\mu_2\frac{a_2}{b_2})
 + (\lambda_2^2+\mu_2^2) \cos\frac{2k_2\theta_2}{n_2}
 + 2(\lambda_2\mu_2+\mu_2^2\frac{a_2}{b_2}) \cos\frac{(n_2-2k_2)\theta_2}{n_2}
\end{multline}
From Lemma~\ref{L:basecosinus}, taking the trace we obtain
\[
\frac{1}{2}(\lambda_1^2+\mu_1^2+2\lambda_1\mu_1\frac{a_1}{b_1})  = \frac{1}{2}(\lambda_2^2+\mu_2^2+2\lambda_2\mu_2\frac{a_2}{b_2})\,.
\]
Hence
\begin{multline}\label{E:calculsommecarre3}
(\lambda_1^2+\mu_1^2) \cos\frac{2k_1\theta_1}{n_1}
 + 2(\lambda_1\mu_1+\mu_1^2\frac{a_1}{b_1}) \cos\frac{(n_1-2k_1)\theta_1}{n_1}\\
 =  (\lambda_2^2+\mu_2^2) \cos\frac{2k_2\theta_2}{n_2}
 + 2(\lambda_2\mu_2+\mu_2^2\frac{a_2}{b_2}) \cos\frac{(n_2-2k_2)\theta_2}{n_2}
\end{multline}
However, by Lemma~\ref{L:peuderacinescarre}, the number \eqref{E:calculsommecarre1} is irrational, hence the number in \eqref{E:calculsommecarre3} is not $0$; a contradiction with the minamility of $n_1$. Therefore $n_1=2$ and $k_1=1$, so $2k_1=n_1$, thus $2k_2=n_2$, the result follows for $x=\lambda_1+\mu_1$, and $y=\lambda_2+\mu_2$.
\end{cproof}

As $\cos\frac{\theta_1}{2} = \sqrt{\frac{b_1+a_1}{2b_1}}$ and $\cos\frac{\theta_2}{2} = \sqrt{\frac{b_2+a_2}{2b_2}}$, it follows from Claim~2 that $(b_1+a_1)(b_2+a_2)b_1b_2$ is a square; a contradiction. Therefore $K_1$ and $K_2$ are distincts.
\end{proof}

The following lemma is an immediate consequence of Hensel's Lemma (see for instance \cite[Theorem 5.6]{Narkiewicz}). 
    
\begin{lemma}\label{L:carremodpuissance}
Let $p\ge 3$ be a prime, let $q$ be relatively prime to $p$. Let $n\ge 1$ be an integer. Assume that $q$ is a square modulo $p$. Then $q$ is a square modulo $p^n$.
\end{lemma}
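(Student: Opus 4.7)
The plan is to apply Hensel's Lemma to the polynomial $f(X)=X^2-q$, exactly as the remark preceding the statement suggests. First I would fix $a\in\Z$ with $a^2\equiv q\pmod p$, which exists by hypothesis. Note that $a$ must be coprime to $p$: if $p\mid a$ then $p\mid a^2\equiv q\pmod p$, contradicting $\gcd(q,p)=1$.

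Next I would verify the hypotheses of Hensel's Lemma at the root $a$ of $f$. We have $v_p(f(a))=v_p(a^2-q)\ge 1$ and $f'(a)=2a$. Because $p\ge 3$ is odd we have $\gcd(2,p)=1$, and combined with $\gcd(a,p)=1$ this gives $v_p(f'(a))=v_p(2a)=0$. Therefore the strict Hensel inequality $v_p(f(a))>2\,v_p(f'(a))$ is satisfied.

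By \cite[Theorem 5.6]{Narkiewicz}, there exists $b\in\Z_p$ with $f(b)=0$, i.e.\ $b^2=q$ in $\Z_p$. Choosing an integer $b_n$ such that $b_n\equiv b\pmod{p^n}$ then gives $b_n^2\equiv q\pmod{p^n}$, which is the conclusion. I do not anticipate any serious obstacle here: the proof is a one-step verification, the only point requiring attention being that $p$ is odd (so that $2\in\Z_p^\times$) and that $a$ is automatically a $p$-adic unit (so that $f'(a)$ is a unit). Both are immediate from the assumptions $p\ge 3$ and $\gcd(q,p)=1$.
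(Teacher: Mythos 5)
Your proof is correct and follows exactly the route the paper indicates: the paper gives no argument beyond citing Hensel's Lemma from Narkiewicz, and your write-up is the straightforward verification of its hypotheses for $f(X)=X^2-q$, using $p\ge 3$ to make $f'(a)=2a$ a unit. Nothing to change.
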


\begin{lemma}\label{L:carremodulopremier}
Let $t$ be an odd number. Then there are infinitely many prime number $p$ such that $p\equiv 2\mod t$, $p\equiv 3\mod 4$, and $2t$ is a square modulo $p$.
\end{lemma}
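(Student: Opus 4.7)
The plan is to express all three conditions as a single congruence condition on $p$ modulo $8t$ and then apply Dirichlet's theorem on primes in arithmetic progressions. The key observation is that once the first two conditions ($p\equiv 2\pmod t$ and $p\equiv 3\pmod 4$) are imposed, quadratic reciprocity reduces the third condition to a condition on $p$ modulo $8$ alone.

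Concretely, write $t=q_1^{a_1}\cdots q_k^{a_k}$ with $q_i$ odd primes. Using multiplicativity of the Legendre symbol and quadratic reciprocity, for any prime $p\equiv 3\pmod 4$ with $p\equiv 2\pmod t$ one gets
\[
\Legendre{t}{p}=\prod_{i=1}^{k}\Legendre{q_i}{p}^{a_i}=\prod_{i=1}^{k}\left((-1)^{(q_i-1)/2}\Legendre{p}{q_i}\right)^{a_i}=\prod_{i=1}^{k}\left((-1)^{(q_i-1)/2}\Legendre{2}{q_i}\right)^{a_i},
\]
where the second equality uses $(p-1)/2$ odd and the third uses $p\equiv 2\pmod{q_i}$. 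Call this value $\varepsilon\in\{\pm 1\}$; note $\varepsilon$ depends only on $t$. Thus $2t$ is a square modulo $p$ if and only if $\Legendre{2}{p}=\varepsilon$, i.e.\ (combined with $p\equiv 3\pmod 4$) if and only if $p\equiv 7\pmod 8$ when $\varepsilon=1$ and $p\equiv 3\pmod 8$ when $\varepsilon=-1$.

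Fix $e\in\{3,7\}$ according to the value of $\varepsilon$. Since $t$ is odd, $\gcd(8,t)=1$, so by CRT there is a unique $c$ modulo $8t$ with $c\equiv 2\pmod t$ and $c\equiv e\pmod 8$; as $\gcd(2,t)=1$ and $\gcd(e,8)=1$, we have $\gcd(c,8t)=1$. Then Dirichlet's theorem gives infinitely many primes $p\equiv c\pmod{8t}$, and each such $p$ satisfies all three required conditions. The only real work is the reciprocity computation above; there is no analytic obstacle since the whole statement is qualitative and follows directly from Dirichlet's theorem once the congruence data is assembled.
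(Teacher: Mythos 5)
Your proof is correct and follows essentially the same strategy as the paper: set up the single congruence class modulo $8t$ via quadratic reciprocity, check it is coprime to $8t$, and invoke Dirichlet. The paper defines $\varepsilon=\prod_{q\mid t}(-1)^{(q-1)/2}\Legendre{2}{q}$ over \emph{distinct} prime divisors of $t$ and then verifies $\Legendre{2t}{p}=\varepsilon^2=1$, while you define $\varepsilon$ as the value of $\Legendre{t}{p}$ with the multiplicities $a_i$ in place, so that $\Legendre{2t}{p}=1$ becomes $\Legendre{2}{p}=\varepsilon$. These are two phrasings of the same reciprocity computation; the only substantive difference is that you keep the exponents $a_i$, which actually matters: the identity $\Legendre{t}{p}=\prod_{q\mid t}\Legendre{q}{p}$ the paper uses only holds when $t$ is square-free (for $t=25$, say, $\Legendre{25}{p}=1$ but the paper's $\varepsilon$ could be $-1$), so your version is the one that is correct under the lemma's literal hypothesis ``odd $t$.'' In the paper this is harmless because the lemma is only applied to square-free $t$, but your handling is the cleaner one.
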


\begin{proof}
Given $p,q$ we denote by $\Legendre{p}{q}$ the Legendre symbol.
Set
\[
\varepsilon = \prod_{\substack{q|t\\ q\text{ prime}}} (-1)^{\frac{q-1}{2}}\Legendre{2}{q}\,.
\]
As $t$ is odd, it follows that $\varepsilon=\pm 1$. If $\varepsilon=1$ pick $p$ a prime number such that $p\equiv 2\mod t$ and $p\equiv 7\mod 8$. If $\varepsilon=-1$ pick $p$ a prime number such that $p\equiv 2\mod t$ and $p\equiv 3\mod 8$. In both case there are infinitely many primes satisfying those conditions by Dirichlet's theorem (cf. \cite{Dirichlet}), and in both case we have $p\equiv 3\mod 4$. Moreover by Gauss's quadratic reciprocity we have $\Legendre{2}{p}=\varepsilon$.

The following equalities hold
\begin{align*}
\Legendre{2t}{p} &= \Legendre{2}{p} \prod_{\substack{q|t\\ q\text{ prime}}} \Legendre{q}{p}\,, &&\text{by multiplicativity.}\\
&= \Legendre{2}{p} \prod_{\substack{q|t\\ q\text{ prime}}} (-1)^{\frac{p-1}{2}\frac{q-1}{2}}\Legendre{p}{q}\,, &&\text{by Gauss's quadratic reciprocity.}\\
&= \Legendre{2}{p} \prod_{\substack{q|t\\ q\text{ prime}}} (-1)^{\frac{q-1}{2}}\Legendre{2}{q}\,, &&\text{as $\frac{p-1}{2}$ is odd and $p\equiv 2\mod t$.}\\
&=\varepsilon^2\\
&=1
\end{align*}
Therefore $2t$ is a square modulo $p$.
\end{proof}

\begin{lemma}\label{L:thesquares}
Let $t \geq 1$ be a square-free odd number. There are infinitely many couples of positive integers $( m_i , n_i )$ such that, for all $i$, $m_i - 2$ and $n_i$ are both relatively prime to $2t$ and, for all $i \neq j$, $( 2^{m_i} t^{n_i} - 1 ) ( 2^{m_j} t^{n_j} - 1 )$ is not a square.
\end{lemma}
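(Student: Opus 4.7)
The plan is to build the sequence $(m_i, n_i)$ inductively: at stage $k + 1$, having chosen $(m_1, n_1), \ldots, (m_k, n_k)$, one selects a fresh prime $p$ and constructs $(m_{k+1}, n_{k+1})$ so that $p$ appears to an odd power in $2^{m_{k+1}} t^{n_{k+1}} - 1$ but to power zero in every earlier $2^{m_j} t^{n_j} - 1$. Concretely, let $P_k$ be the finite set of primes dividing $\prod_{j \leq k}(2^{m_j}t^{n_j}-1)$. By Lemma~\ref{L:carremodulopremier} one can choose $p \notin P_k$ with $p \equiv 2 \pmod t$, $p \equiv 3 \pmod 4$, and $2t$ a square modulo $p$. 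If $(m_{k+1}, n_{k+1})$ is produced with $m_{k+1}-2$ and $n_{k+1}$ coprime to $2t$ and with $v_p(2^{m_{k+1}} t^{n_{k+1}} - 1)$ odd, then for every $j \leq k$ the valuation $v_p\bigl((2^{m_j}t^{n_j}-1)(2^{m_{k+1}} t^{n_{k+1}} - 1)\bigr)$ equals that odd number, so the product is not a square.

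The crucial technical step is therefore: for any such prime $p$, find $(m, n)$ with $m - 2, n$ coprime to $2t$ and $v_p(2^m t^n - 1)$ odd. Set $\ell = (p-1)/2$. Since $p \equiv 3 \pmod 4$, $\ell$ is odd, and since $p \equiv 2 \pmod t$, $\gcd(\ell, t) = 1$. Euler's criterion applied to the square $2t$ yields $(2t)^\ell \equiv 1 \pmod p$. I would parametrise $(m, n) = (\ell + (p-1) A, \ell + (p-1) B)$ with nonnegative integers $A, B$; both $m$ and $n$ are then automatically odd. Since $\gcd(p-1, t) = 1$, CRT lets one prescribe $A, B$ modulo $t$ so that $m \not\equiv 2 \pmod q$ and $n \not\equiv 0 \pmod q$ for every prime $q \mid t$. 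This secures $\gcd(m-2, 2t) = \gcd(n, 2t) = 1$ while $2^m t^n \equiv (2t)^\ell \cdot 1 \cdot 1 \equiv 1 \pmod p$, so $v_p(2^m t^n - 1) \geq 1$.

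To force the parity of $v_p$ to be odd, write $2^{p-1} = 1 + p^a F$, $t^{p-1} = 1 + p^b G$, $(2t)^\ell = 1 + p^c X$ with $F, G, X$ coprime to $p$, and set $k = \min(a, b)$; the identity $((2t)^\ell)^2 = 2^{p-1} t^{p-1}$ forces $c \geq k$. A direct $p$-adic expansion of $2^m t^n - 1$ modulo $p^{k+2}$ shows that the coefficient at level $p^k$ is a non-trivial affine form in $(A, B)$ modulo $p$ (with a nonzero coefficient on whichever of $A, B$ corresponds to the minimising variable in $\{a, b\}$), and that on its mod-$p$ vanishing locus the level-$p^{k+1}$ coefficient is again non-trivial. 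Hence $(A, B)$ modulo $p^2$ can be chosen to make $v_p(2^m t^n - 1)$ equal either $k$ or $k+1$; since these have opposite parity, one is odd, and CRT (using $\gcd(p, 2t) = 1$) guarantees compatibility with the modulo-$t$ constraints. The main technical obstacle is the non-vanishing of the level-$p^{k+1}$ coefficient, which needs a short case analysis on whether $a < b$, $a > b$, or $a = b$ (and in the last case whether $F + G \equiv 0 \pmod p$); in each branch the relation $((2t)^\ell)^2 = 2^{p-1}t^{p-1}$ rules out simultaneous degeneracy and finishes the argument.
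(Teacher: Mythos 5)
Your outer induction mirrors the paper's exactly (pick a fresh prime $p$ via Lemma~\ref{L:carremodulopremier} avoiding the primes already dividing the earlier $2^{m_j}t^{n_j}-1$), but the core step — producing $(m,n)$ coprime as required with $v_p(2^m t^n - 1)$ odd — is carried out by a genuinely different device. The paper fixes an odd $u$ with $(2t)^{(p-1)/2} < p^{u+1}$, lifts $2t$ to a square $x^2$ modulo $p^{u+1}$ via Hensel's lemma (Lemma~\ref{L:carremodpuissance}), and then exploits the cyclic structure of $(\Z/p^{u+1}\Z)^*$: since $\langle x^{p-1}\rangle$ is a nontrivial $p$-subgroup it contains $p^u+1$, giving an odd $c$ with $(2t)^{(p-1)c/2}\equiv p^u+1 \pmod{p^{u+1}}$, hence $v_p = u$ exactly, and CRT modulo $(p-1)p^u$ and $t$ supplies the coprimality conditions. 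You instead set $k=\min(v_p(2^{p-1}-1),v_p(t^{p-1}-1))$ and perturb $(A,B)$ modulo $p^2$ so that the valuation lands in $\{k,k+1\}$, one of which is odd. That route works, but the non-degeneracy at level $p^{k+1}$ is cleaner than your sketch indicates: no case analysis and no appeal to $((2t)^\ell)^2 = 2^{p-1}t^{p-1}$ are needed. If, say, $a=k\le b$, replacing $A$ by $A+p$ multiplies $2^m t^n$ by $2^{(p-1)p}$, whose expansion is $1+p^{a+1}F'$ with $F'\equiv F\pmod p$ a unit, so the level-$p^{k+1}$ digit of $2^m t^n - 1$ shifts by a unit while the level-$p^k$ digit is unchanged; thus after killing level $p^k$ one can always reach $v_p=k+1$. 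Your approach trades the group-theoretic lemma and Hensel lifting for a direct $p$-adic expansion and a free parameter modulo $p^2$; the paper's argument is shorter and dispenses with any case distinction, while yours makes the achievable valuation more explicitly visible.
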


\begin{proof} 
Assume that we have $( m_1 , n_1 ),\dots,( m_k, n_k )$  (for $1\le i\le k$), such that, for every $i \neq j$, $( 2^{m_i} ( 2 t )^{n_i} - 1 ) ( 2^{m_j} ( 2t )^{n_j} - 1 )$ is not a square. 

It follows from Lemma~\ref{L:carremodulopremier} that there is a prime $p$ such that $p\equiv 2\mod t$, $p\equiv 3\mod 4$, $2t$ is a square modulo $p$ and, for all $1 \leq i \leq k$, $p$ does not divide  $2^{m_i} t^{n_i} - 1$. In particular $p$ is relatively prime to $2t$.

Pick $u\ge 1$ odd such that $(2t)^{\frac{p-1}{2}}<p^{u+1}$, hence $(2t)^{\frac{p-1}{2}}\not\equiv 1\mod p^{u+1}$. Note that Lemma~\ref{L:carremodpuissance} implies that $2t$ is a square modulo $p^{u+1}$. Let $x$ be such that $2t\equiv x^2\mod p^{u+1}$, and so $x$ is prime to $p$. Therefore $x^{p-1}\equiv (x^2)^{\frac{p-1}{2}}\equiv (2t)^{\frac{p-1}{2}}\mod p^{u+1}$, so $x^{p-1}\not\equiv 1\mod p^{u+1}$. However the multiplicative group $(\Z/p^{u+1}\Z)^*$ is cyclic of order $(p-1)p^u$, hence $x^{(p-1)p^u}\equiv 1\mod p^{u+1}$.

Note that $(p^u+1)^p\equiv 1\mod p^{u+1}$. As the subgroup of $(\Z/p^{u+1}\Z)^*$ generated by $x^{p-1}$ is not trivial and of order dividing $p^u$, it follows that this subgroup contains $p^u+1$. Therefore there is $c\ge 1$ such that $x^{(p-1)c}\equiv p^u+1\mod p^{u+1}$. As $x^{(p-1)p^u}\equiv 1\mod p^{u+1}$ we have $x^{(p-1)(c+p^u)}\equiv x^{(p-1)c}\equiv p^u+1\mod p^{u+1}$. Therefore as $p$ is odd, we can assume that $c$ is odd.

Set $v=\frac{p-1}{2}c$. Note that $2^v t^v=(2t)^{\frac{p-1}{2}c}\equiv x^{(p-1)c}\equiv p^u+1 \mod p^{u+1}$. As $(p-1)p^u$ is even and relatively prime to $t$ we can pick $w_0$ and $w_1$ odds such that $w_0-2$ is prime to $t$, $w_1$ is prime to $t$, $w_0\equiv v\mod (p-1)p^u$, and $w_1\equiv v\mod (p-1)p^u$. Therefore $2^{w_0}t^{w_1}\equiv 2^v t^v\equiv p^u+1\mod p^{u+1}$, so $p^u$ divides $2^{w_0}t^{w_1}-1$ and $p^{u+1}$ does not divides $2^{w_0}t^{w_1}-1$. 

Note that $v_p(2^{w_0}t^{w_1}-1)=u$ is odd, moreover $p$ does not appear in any prime decomposition of the $2^{m_i} t^{n_i} - 1$. Therefore for all $1\le i\le k$, $( 2^{m_i} t^{n_i} - 1 ) ( 2^{w_0} t^{w_1} - 1 )$ is not a square.
\end{proof}

\begin{theorem}\label{T:exemplenombreJR}
Let $t\ge 1$ be a square-free odd number. Then the following statement holds.
\begin{enumerate}
\item There are infinitely many fields $ K $ such that $\cO_K$ has Julia Robinson's number $\Ceil{2\sqrt{2t}}+2\sqrt{2t}$.
\item There are infinitely many fields $K$ such that $\cO_K$ has Julia Robinson's number $8t$.
\end{enumerate}
\end{theorem}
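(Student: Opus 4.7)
The plan for both parts is identical: exhibit an infinite family of good triples $(a_i, b_i, r_i)$, check that the associated $s_i$ from Notation~\ref{N:notJR2} equals the value that yields the prescribed Julia Robinson number via Theorem~\ref{T:MainJuliaRobinsonNumber}, and then invoke Lemma~\ref{L:corpsdifferents} to ensure infinitely many of the resulting fields $K_i$ are pairwise distinct. I will always take $r_i = 2$, so that the parity hypothesis of Lemma~\ref{L:corpsdifferents} is automatic.

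For part (1), the plan is to apply Lemma~\ref{L:thesquares} to get an infinite sequence of pairs $(m_i, n_i)$ with $m_i - 2$ and $n_i$ both coprime to $2t$ (in particular, $m_i$ and $n_i$ are odd) and, for $i \neq j$, $(2^{m_i} t^{n_i} - 1)(2^{m_j} t^{n_j} - 1)$ not a square. I would then set $b_i = 2^{m_i - 1} t^{n_i}$ and $a_i = b_i - 1$; after dropping finitely many indices to enforce $m_i \geq 3$, each triple $(a_i, b_i, 2)$ is good because $4 \mid b_i$, $v_2(b_i) - 1 = m_i - 2$ is odd, $v_p(b_i) = n_i$ is odd for every odd prime $p \mid t$, and $\gcd(a_i, b_i) = 1$. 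Since $b_i - a_i = 1$ is a square and the odd primes of $b_i$ are exactly those of $t$, Corollary~\ref{C:preencadrementJR}(4) gives $s_i^2 = 8t$, hence $s_i = 2\sqrt{2t}$, and Theorem~\ref{T:MainJuliaRobinsonNumber} yields $\JR(\cO_{K_i}) = \Ceil{2\sqrt{2t}} + 2\sqrt{2t}$. For distinctness, $a_i + b_i = 2b_i - 1 = 2^{m_i} t^{n_i} - 1$, so
\[
(b_i + a_i)(b_j + a_j)\, b_i b_j \;=\; (2^{m_i} t^{n_i} - 1)(2^{m_j} t^{n_j} - 1)\cdot 2^{m_i + m_j - 2} t^{n_i + n_j};
\]
the parity of the $m$'s and $n$'s makes the second factor a square, while Lemma~\ref{L:thesquares} ensures the first is not, so Lemma~\ref{L:corpsdifferents} gives $K_i \neq K_j$.

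For part (2), the plan is to apply Lemma~\ref{L:squarfreenumbers} with modulus $2t$ and some fixed $\varepsilon \in (0, 1)$. For each sufficiently large $k \geq 1$ I set $b_k = 2^{2k} t$ (a multiple of $2t$) and take from the lemma an $a_k$ with $1 \leq a_k \leq \varepsilon b_k$, $a_k \equiv 1 \pmod{2t}$, and both $b_k - a_k$ and $b_k + a_k$ square-free. Then $\gcd(a_k, b_k) = 1$, $4 \mid b_k$, $v_2(b_k) - 1 = 2k - 1$ is odd, and $v_p(b_k) = 1$ for odd $p \mid t$, so $(a_k, b_k, 2)$ is good; moreover for $k$ large the inequality $b_k \geq 2t + a_k$ holds, so Corollary~\ref{C:preencadrementJR}(5) gives $s_k = 4t$ and $\JR(\cO_{K_k}) = 8t$. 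For distinctness, $b_k b_{k'} = 2^{2(k+k')} t^2$ is always a square, so it is enough that $(b_k + a_k)(b_{k'} + a_{k'})$ be non-square for $k' > k$; both factors are square-free by construction, and $b_{k'} \geq 4 b_k > b_k + a_k$ makes them distinct, whence their product is not a square.

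The main obstacle is simply aligning the outputs of Lemmas~\ref{L:squarfreenumbers} and~\ref{L:thesquares} with the parity and divisibility requirements of the good-triple conditions and of Lemma~\ref{L:corpsdifferents}: the coprimality conditions in Lemma~\ref{L:thesquares} are calibrated precisely so that $b_i b_j$ is automatically a square and the whole distinctness argument reduces to the non-square conclusion of that lemma. Once these compatibilities are in place, Theorem~\ref{T:MainJuliaRobinsonNumber} together with parts (4) and (5) of Corollary~\ref{C:preencadrementJR} completes both parts.
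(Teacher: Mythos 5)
Your proof is correct and follows essentially the same strategy as the paper: build good triples $(a,b,r)$ from Lemma~\ref{L:thesquares} (part 1, with $b-a=1$ a square so Corollary~\ref{C:preencadrementJR}(4) gives $s^2=8t$) and from Lemma~\ref{L:squarfreenumbers} (part 2, with $b\pm a$ square-free so Corollary~\ref{C:preencadrementJR}(5) gives $s=4t$), then read off $\JR(\cO_K)=\ceil{s}+s$ from Theorem~\ref{T:MainJuliaRobinsonNumber}, and separate the resulting fields with Lemma~\ref{L:corpsdifferents}. The only deviation from the paper is that you take $r=2$ throughout, whereas the paper takes $r=2t$; since the goodness conditions on $(a,b,r)$ only become weaker when $r$ is replaced by a divisor, and Lemma~\ref{L:corpsdifferents} only needs $r$ even, this is a harmless (in fact slightly cleaner) choice, and it lets you use $b_k=2^{2k}t$ in part~(2) without worrying about $v_2(b_k)-1$ being coprime to $t$. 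Your non-square bookkeeping — $b_ib_j$ a square because the $m_i,n_i$ are all odd, distinct square-free numbers $>1$ having non-square product — is exactly what Lemma~\ref{L:corpsdifferents} needs. One small caveat worth flagging in a write-up: the statement of Lemma~\ref{L:thesquares} as given allows $m_i=1$ (which would break $4\mid b_i$), so ``drop finitely many indices'' is not quite justified by the statement alone; you should instead note that the inductive construction in the proof of that lemma can always choose the new exponent $w_0$ as large as desired, so one can arrange $m_i\ge 3$ from the outset (the paper makes the same implicit assumption).
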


\begin{proof}
By Lemma~\ref{L:corpsdifferents} and Lemma~\ref{L:thesquares}, we can find infinitely many good $( a, b, r )$, such that $b= 2^{m-1} t^n$, with $m -2$ and $n$ relatively prime to $2t$, $a = b - 1$ and $r = 2t$, and all the totally real associated fields are distinct.

Fix such $(a,b,r)$. Let $K$ be the totally real field associated to $(a,b,r)$, let $s$ as in Notation~\ref{N:notJR2}. Therefore by Corollary~\ref{C:preencadrementJR}(4) we have $s^2=8t$, thus $s=2\sqrt{2t}$. Therefore by Theorem~\ref{T:MainJuliaRobinsonNumber} the Julia Robinson's number of $\cO_K$ is $\Ceil{2\sqrt{2t}}+2\sqrt{2t}$. So $(1)$ holds.

Set $r=k=2t$. Let $C$ as in Lemma~\ref{L:squarfreenumbers}, for $\varepsilon=\frac{1}{2}$. Consider $n\ge 1$ an odd integer such that $n$ is relatively prime to $t$ and $2^{n+1}t\ge \max(C,4t)$. Set $b=2^{n+1}t$. Note that there are infinitely many possibilities for such $b$. By Lemma~\ref{L:squarfreenumbers} there is $1\le a\le \frac{1}{2}b$ such that $a\equiv 1\mod k$, and $b-a$, $b+a$ are square-free. As $k|b$, it follows that $a$ is relatively prime to $b$.

Consider $K$ the field constructed in Section~\ref{S:JR}. Let $s$ as in Notation~\ref{N:notJR2}. As $b-a$ and $b+a$ are square-free, and $b\ge \frac{b}{2} + \frac{b}{2}\ge 2t+a$ by Corollary~\ref{C:preencadrementJR}(3) we have $s=4t$. Therefore by Theorem~\ref{T:MainJuliaRobinsonNumber} the Julia Robinson's number of $\cO_K$ is $\Ceil{4t}+4t=8t$. Again, by Lemma~\ref{L:corpsdifferents}, by choosing distinct $b$, we obtain distinct fields. So $(2)$ holds.
\end{proof}

\section{Acknowledgement}

The authors thank Francesco Amoroso for the invitation to the Universit\'e de Caen. The friendly ambiance and a discussion, with Francesco Amoroso, yields to the explicit example in Theorem~\ref{T:exemplenombreJR}(2). The authors also thank Florence and Jean Gillibert and Philippe Satg\'e for helpful discussions. Finally, the authors thank Denis Simon, for his very useful calculations using Paris, allowing us to find our examples.

\section{Funding}
The first author was supported by CONICYT, Proyectos Regulares FONDECYT no. 1150595, and by project number P27600 of the Austrian Science Fund (FWF). The second author was supported by CONICYT, Proyectos Regulares FONDECYT no. 1140946.

\end{document}